\title[Dominance Hierarchy in Coxeter Root Systems]{The Dominance Hierarchy In
Root Systems of Coxeter Groups}
\author{Fu, Xiang}
\dedicatory{\upshape
School of Mathematics and Statistics\\
University of Sydney, NSW 2006, Australia\\[.5em]
\texttt{xifu9119@mail.usyd.edu.au}\\
\texttt{X.Fu@maths.usyd.edu.au}\\[1em]
Preliminary version,
\today
}
\newtheorem{theorem}{Theorem}[section]
\newtheorem{lemma}[theorem]{Lemma}
\newtheorem{proposition}[theorem]{Proposition}
\newtheorem{corollary}[theorem]{Corollary}
\theoremstyle{definition}
\newtheorem{definition}[theorem]{Definition}
\theoremstyle{remark}
\newtheorem{remark}[theorem]{Remark}
\numberwithin{equation}{section}
\newcommand{\Z}{\mathbb{Z}}
\newcommand{\N}{\mathbb{N}}
\newcommand{\R}{\mathbb{R}}
\newcommand{\E}{\mathscr{E}}
\newcommand{\dom}{\unrhd}
\DeclareMathOperator{\PLC}{PLC}
\DeclareMathOperator{\coeff}{coeff}
\DeclareMathOperator{\supp}{supp}
\DeclareMathOperator{\GL}{GL}
\DeclareMathOperator{\spa}{span}
\DeclareMathOperator{\dep}{dp}
\subjclass[2010]{20F55 (20F10, 20F65)}
\keywords{Coxeter groups, root systems, dominance}
\begin{document}

\begin{abstract}
If $x$ and $y$ are roots in the root system with respect to the
  standard (Tits) geometric realization of a Coxeter group $W$, we say
  that $x$ \emph{dominates} $y$ if for all $w\in W$, $wy$ is a
  negative root whenever $wx$ is a negative root. We call a positive root
 \emph{elementary} if it does not dominate any 
  positive root other than itself. The set of all elementary roots is denoted by
$\E$.
  It has been proved by B.~Brink and R.~B.~Howlett (Math.~Ann.~\textbf{296}
(1993), 179--190)
 that $\E$ is finite if (and only if)
  $W$ is a finite-rank Coxeter group. Amongst other things, this
  finiteness property enabled Brink and Howlett to establish the
  automaticity of all finite-rank Coxeter groups. Later Brink has
  also given a complete description of the set $\E$ for arbitrary
  finite-rank Coxeter groups (J.~Algebra \textbf{206} (1998)). However the set
of non-elementary positive
roots has received little attention in the literature. In this paper we answer
a collection of questions concerning the dominance behaviour between such
non-elementary
positive roots. In particular, we show that for any finite-rank Coxeter group
and for any non-negative integer $n$, the set of roots each dominating precisely
$n$ other positive roots is finite. We give upper and lower bounds
for the sizes of all such sets as well as an inductive algorithm for their
computation.
\end{abstract}

\maketitle

\section{Summary of Background Material}
\label{sec:intro}
\begin{definition}\textup{(Krammer \cite{DK94})}
 \label{def:datum}
Suppose that $V$ is a vector space over $\R$ and let $(\,,\,)$ be a bilinear
form on $V$, and let $\Pi$ be a subset of $V$. Then $\Pi$ is called a \emph{root
basis} if the following conditions are satisfied:
\begin{itemize}
 \item [(C1)] $(a, a)=1$ for all $a\in \Pi$, and if $a, b$ are distinct elements
of $\Pi$ then either $(a, b)=-\cos(\pi/m_{ab})$ for some integer
$m_{ab}=m_{ba}\geq 2$, or else $(a, b) \leq -1$ (in which case we define
$m_{ab}=m_{ba}=\infty$);
 \item [(C2)] $0\notin \PLC(\Pi)$, where for any set $A$, $\PLC(A)$ denotes the
set
 $$\{\,\sum\limits_{a\in A} \lambda_a a\mid \text{$\lambda_a \geq 0$ for all
$a\in A$ and $\lambda_{a'}>0$ for some $a'\in A$}\,\}.$$ 
\end{itemize}
\end{definition}

If $\Pi$ is a root basis, then we call the triple $\mathscr{C}=(\,V, \, \Pi,
\,(\,,\,)\,)$ a \emph{Coxeter datum}. Throughout this paper we fix a particular
Coxeter datum $\mathscr{C}$. Observe that (C1) implies that for each $a\in \Pi$,
$a\notin \PLC(\Pi\setminus\{a\})$. Furthermore, (C1) together with (C2) yield
that whenever $a, b\in \Pi$ are distinct then $\{a, b\}$ is linearly
independent. For each $a\in \Pi$ define $\rho_a \in \GL(V)$ by the rule: $\rho_a
x=x-2(x, a)a$, for all $x\in V$. Note that $\rho_a$ is an involution and $\rho_a
a=-a$. The following Proposition summarizes a few useful results:
 
\begin{proposition}\textup{\cite[Lecture 1]{RB96}}
 \label{pp:anu1}
\rm{(i)}\quad Suppose that $a, b\in \Pi$ are distinct such that $m_{ab}\neq
\infty$. Set $\theta =\pi/m_{ab}$. Then for each integer $i$,
$$(\rho_a \rho_b)^i a=\frac{\sin(2i+1)\theta}{\sin \theta}a+\frac{\sin
2i\theta}{\sin\theta}b, $$
and in particular, $\rho_a \rho_b$ has order $m_{ab}$.

\noindent\rm{(ii)}\quad Suppose that $a, b\in \Pi$ are distinct such that
$m_{ab}=\infty$. Set $\theta =\cosh^{-1}(-(a, b))$. Then for each integer $i$,
\begin{equation*}
(\rho_a \rho_b)^i a=
\begin{cases}
 \frac{\sinh(2i+1)\theta}{\sinh \theta}a+\frac{\sinh
2i\theta}{\sinh\theta}b, \text{ if $(a, b) \neq -1$}\\
(2i+1)a+2i b, \text{ }\text{ }\text{ }\text{ }\text{ }\text{ }\text{ }\text{ if $(a, b)=-1$}, 
\end{cases}
\end{equation*}
and in particular, $\rho_a \rho_b$ has infinite order.
\qed
\end{proposition}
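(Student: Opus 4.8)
The plan is to restrict everything to the plane $U = \spa\{a, b\}$ and reduce both parts to a single linear two-term recurrence. Since $\rho_a x = x - 2(x,a)a$ and $\rho_b x = x - 2(x,b)b$, both $\rho_a$ and $\rho_b$ carry $U$ into itself, so $\sigma := \rho_a \rho_b$ restricts to a linear operator on $U$; because $\{a,b\}$ is linearly independent (noted immediately after Definition~\ref{def:datum}) it is a basis of $U$, and the coefficients in the asserted formula are thereby uniquely determined. Writing $c = (a,b)$ and using $\rho_a a = -a$ together with $\rho_b b = b - 2(b,b)b = -b$, I would first compute $\sigma a = (4c^2-1)a - 2cb$ and $\sigma b = 2ca - b$, so that the matrix of $\sigma|_U$ in the ordered basis $(a,b)$ is
\begin{equation*}
M = \begin{pmatrix} 4c^2 - 1 & 2c \\ -2c & -1 \end{pmatrix},
\end{equation*}
with $\det M = 1$ and $\mathrm{tr}\,M = 4c^2 - 2$. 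Cayley--Hamilton then gives $M^2 = (4c^2-2)M - I$, and applying this operator identity to the vector $\sigma^{\,i-1}a$ (legitimate for every integer $i$, as $M$ is invertible) shows that $u_i := \sigma^{\,i} a$ satisfies $u_{i+1} = (4c^2-2)\,u_i - u_{i-1}$ for all $i \in \Z$. Since $\{a,b\}$ is a basis, this is equivalent to the two scalar coefficient-sequences each obeying the same recurrence.

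Next I would check that the closed forms on the right-hand sides of (i) and (ii) satisfy exactly this recurrence and match $u_i$ at two consecutive indices. In case (i), $c = -\cos\theta$ gives $4c^2-2 = 2\cos 2\theta$, and the addition identity $\sin(x+2\theta) + \sin(x-2\theta) = 2\cos 2\theta\,\sin x$ (with $x=(2i+1)\theta$ and with $x=2i\theta$) shows that $\sin((2i+1)\theta)/\sin\theta$ and $\sin(2i\theta)/\sin\theta$ both obey $x_{i+1} = 2\cos 2\theta\,x_i - x_{i-1}$. In case (ii) with $(a,b)\neq -1$ one has $c=-\cosh\theta$, $4c^2-2 = 2\cosh 2\theta$, and the parallel hyperbolic identity does the same; when $(a,b)=-1$ the recurrence is $x_{i+1}=2x_i - x_{i-1}$, satisfied by the arithmetic sequences $2i+1$ and $2i$. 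In each case the formula gives $a$ at $i=0$ and gives $(4c^2-1)a-2cb$ at $i=1$, matching $M$. Because the recurrence has invertible step (the coefficient of $u_{i-1}$ is $-1$), a bi-infinite solution is pinned down by any two consecutive terms, so the closed form equals $u_i$ for every $i \in \Z$, which is the displayed formula.

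Finally I would read off the order of $\sigma$. In case (i), putting $i = m_{ab}$ and $\theta=\pi/m_{ab}$ gives $\sin(2m_{ab}\theta)=0$ and $\sin((2m_{ab}+1)\theta)=\sin\theta$, so $\sigma^{m_{ab}}a=a$, and the same computation with $a,b$ interchanged gives $\sigma^{m_{ab}}b=b$; thus $\sigma^{m_{ab}}$ fixes $U$ pointwise. Here the Gram matrix of the form on $U$ has determinant $1-c^2>0$ (as $|c|<1$), so the form is nondegenerate on $U$ and $V = U \oplus U^{\perp}$; since $\sigma$ fixes $U^{\perp}$ pointwise, $\sigma^{m_{ab}} = \mathrm{id}_V$, while no smaller positive power fixes $U$ because the action there is rotation through $2\theta = 2\pi/m_{ab}$, whence the order is exactly $m_{ab}$. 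In case (ii) it is enough that $\sigma|_U$ has infinite order: for $(a,b)\neq -1$ its eigenvalues are $e^{\pm 2\theta}\neq 1$, and for $(a,b)=-1$ the vectors $(2i+1)a+2ib$ are pairwise distinct. I expect the only genuine care to lie in the order argument for case (i) — passing from ``fixes $a$ and $b$'' to ``equals $\mathrm{id}_V$'' really does require the nondegeneracy of the form on $U$ to split off $U^{\perp}$ — and in keeping the subcase $(a,b)=-1$ separate throughout, since there $U$ carries a degenerate form and the hyperbolic closed form degenerates to the linear one.
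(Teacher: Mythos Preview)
The paper does not supply a proof of this proposition: it is quoted from \cite{RB96} and closed with a \qed. Your argument is correct and complete. The route via Cayley--Hamilton on the $2\times 2$ matrix of $\sigma|_U$ is a tidy way to obtain the bi-infinite second-order recurrence $u_{i+1}=(4c^2-2)u_i-u_{i-1}$; this is equivalent to, and arguably cleaner than, the direct induction one usually sees in standard treatments. Your handling of the order in case~(i) is the point that actually requires care, and you have it right: the Gram determinant $1-c^2>0$ makes the form nondegenerate on $U$, which is precisely what yields the splitting $V=U\oplus U^{\perp}$ and lets you pass from $\sigma^{m_{ab}}|_U=\mathrm{id}_U$ to $\sigma^{m_{ab}}=\mathrm{id}_V$. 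One small remark: when you say ``the same computation with $a,b$ interchanged gives $\sigma^{m_{ab}}b=b$'', note that interchanging $a$ and $b$ replaces $\sigma$ by $\rho_b\rho_a=\sigma^{-1}$, so what you actually obtain is $\sigma^{-m_{ab}}b=b$; this is of course equivalent, but it is worth saying explicitly.
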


Let $G_{\mathscr{C}}$ be the subgroup of $\GL(V)$ generated by the involutions
in the set $\{\,\rho_a\mid a\in \Pi\,\}$.
Let $(W, R)$ be a Coxeter system in the sense of \cite{HH81} or \cite{HM} with
$R=\{\,r_a\mid a\in \Pi\,\}$ being a set of involutions generating $W$
subject only to the condition 
$(r_a r_b)^{m_{ab}}=1$ for all distinct $a, b\in \Pi$ with $m_{ab}\neq \infty$.
Then Proposition~\ref{pp:anu1} yields that there is a group homomorphism
$\phi_{\mathscr{C}}\colon W\to G_{\mathscr{C}}$ satisfying
$\phi_{\mathscr{C}}(r_a)=\rho_a$ for all $a\in \Pi$. This homomorphism together
with the $G_{\mathscr{C}}$-action on $V$ give rise to a $W$-action on $V$: for
each $w\in W$ and $x\in V$, define $wx\in V$ by $wx=\phi_{\mathscr{C}}(w)x$. It
can be easily checked that this $W$-action preserves $(\,,\,)$.
Denote the length function of $W$ with respect to $R$ by $\ell$. Then we have:

\begin{proposition}\textup{\cite[Lecture 1]{RB96}}
 \label{pp:anu2}
Let $G_{\mathscr{C}}, W, R$ be as the above and let $w\in W$ and $a\in \Pi$. If
$\ell(wr_a)\geq \ell(w)$ then $wa\in \PLC(\Pi)$.
\qed
\end{proposition}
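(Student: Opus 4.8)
The plan is to argue by induction on $\ell(w)$. Since right multiplication by a generator changes the length by exactly one, the hypothesis $\ell(wr_a)\ge\ell(w)$ is equivalent to $\ell(wr_a)=\ell(w)+1$. If $\ell(w)=0$ then $w=1$ and $wa=a\in\Pi\subseteq\PLC(\Pi)$, so the base case is immediate. For the inductive step, assume $\ell(w)\ge 1$ and choose $b\in\Pi$ with $\ell(r_bw)<\ell(w)$; put $w'=r_bw$, so that $w=r_bw'$ and $\ell(w')=\ell(w)-1$. Peeling off this left descent is useful because the $W$-action gives $wa=\rho_b(w'a)=w'a-2(w'a,b)\,b$, which transfers the question to the shorter element $w'$.

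The first substantive step is a length comparison. From $w'r_a=r_b(wr_a)$ and the fact that left multiplication by $r_b$ changes length by one, I obtain $\ell(w'r_a)\ge\ell(wr_a)-1=\ell(w')+1$; as the reverse inequality $\ell(w'r_a)\le\ell(w')+1$ is automatic, in fact $\ell(w'r_a)=\ell(w')+1>\ell(w')$. The inductive hypothesis then applies to $w'$ and gives $w'a\in\PLC(\Pi)$. I would next rule out $w'a=b$: if $w'a=b$ then $w'r_a(w')^{-1}=r_b$, whence $wr_a=r_bw'r_a=w'$ and $\ell(wr_a)=\ell(w')<\ell(w)$, contradicting the hypothesis. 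Thus $w'a$ is a positive root different from $b$.

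It remains to show that $wa=\rho_b(w'a)$ lies in $\PLC(\Pi)$, and this is the crux. It will follow at once from the key lemma that $\rho_b$ sends every positive root other than $b$ into $\PLC(\Pi)$: applying this to the positive root $w'a\neq b$ gives $wa=\rho_b(w'a)\in\PLC(\Pi)$, and the induction closes. One should note the subtlety that $\Pi$ need not be linearly independent, so a single vector may admit several expansions over $\Pi$; I would therefore route the conclusion entirely through this lemma rather than through any coordinatewise comparison, membership in $\PLC(\Pi)$ being merely the existence of one non-negative expansion.

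I expect this lemma to be the main obstacle, and I would prove it by induction on the depth $\dep(\beta)=\min\{\ell(v):v\beta\in-\PLC(\Pi)\}$ of a positive root $\beta\neq b$. A root of depth one is a simple root $c\neq b$, and then $\rho_bc=c-2(c,b)b$ is a non-negative combination of $\{c,b\}$ precisely because $(c,b)\le 0$ for distinct elements of $\Pi$ by (C1); so the base case holds. For the inductive step one peels a simple reflection $r_c$ that strictly lowers the depth of $\beta$: when $c=b$ the image $\rho_b\beta$ already has smaller depth and is seen to be positive directly, while when $c\neq b$ one must analyse the orbit of $\beta$ under the rank-two dihedral subgroup $\langle r_b,r_c\rangle$, where the explicit expansions of Proposition~\ref{pp:anu1} control every sign. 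This rank-two reduction, together with the bookkeeping of depths, is the delicate heart of the argument.
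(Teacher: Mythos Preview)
The paper does not prove this proposition; it simply cites \cite{RB96} and places a \qed. So there is no argument here to compare against, and your attempt must be judged on its own.

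Your outer induction on $\ell(w)$ via a left descent is natural and correctly reduces everything to the ``key lemma'' that $\rho_b$ maps $\Phi^+\setminus\{b\}$ into $\PLC(\Pi)$. But this lemma is exactly the statement $N(r_b)=\{b\}$, which in the standard order of development is a \emph{consequence} of Proposition~\ref{pp:anu2} (via $\#N(r_b)=\ell(r_b)=1$), not a tool for proving it. Your sketch of an independent proof by induction on $\dep(\beta)$ does not close. In the case $c=b$ you assert that $\rho_b\beta$ ``is seen to be positive directly'', but at this point the dichotomy $\Phi=\Phi^+\uplus\Phi^-$ (Proposition~\ref{pp:anu3}(ii)) is unavailable---it is itself derived from Proposition~\ref{pp:anu2}---so knowing that some shorter $v'$ sends $\rho_b\beta$ into $-\PLC(\Pi)$ tells you nothing about the sign of $\rho_b\beta$; writing $\rho_b\beta=\sum_{a\ne b}\lambda_a a+(\lambda_b-2(\beta,b))b$ from a non-negative expansion of $\beta$, the last coefficient may well be negative and nothing yet rules out a mixed-sign root. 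In the case $c\ne b$ you defer to a rank-two analysis, but even to speak of the depth of $\rho_c\beta$ you need $\rho_c\beta\in\Phi^+$, which is the lemma again at the same depth.

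The argument in \cite{RB96} (and \cite{HM}, \S5.4) avoids this circularity by peeling a \emph{right} descent $b$ of $w$, so that $b\ne a$, and working inside $\langle r_a,r_b\rangle$: take $v\in\langle r_a,r_b\rangle$ of maximal length with $\ell(wv^{-1})+\ell(v)=\ell(w)$, verify that $\ell(vr_a)>\ell(v)$ and that $u:=wv^{-1}$ satisfies $\ell(ur_a),\ell(ur_b)>\ell(u)$ with $\ell(u)<\ell(w)$, and write $wa=u(va)$. The inductive hypothesis gives $ua,ub\in\PLC(\Pi)$, while the explicit rank-two formulae of Proposition~\ref{pp:anu1} give $va\in\PLC(\{a,b\})$; combining these yields $wa\in\PLC(\Pi)$. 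The rank-two computation you rightly flag as the heart of the matter must be carried out \emph{inside} the induction on $\ell(w)$, not delegated to an auxiliary lemma whose hypotheses cannot yet be verified.
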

\begin{corollary} \textup{(\cite[Lecture 1]{RB96})}
 \label{co:anu2}
$\phi_{\mathscr{C}}\colon W\to G_{\mathscr{C}}$ is an isomorphism.
\end{corollary}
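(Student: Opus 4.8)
The map $\phi_{\mathscr{C}}$ is already known to be a well-defined homomorphism, and since $G_{\mathscr{C}}$ is by definition generated by the elements $\rho_a=\phi_{\mathscr{C}}(r_a)$, surjectivity is immediate. So the entire content of the corollary is the injectivity of $\phi_{\mathscr{C}}$, and the plan is to show that its kernel is trivial by producing, for every $w\neq 1$, a simple root on which $\phi_{\mathscr{C}}(w)$ acts nontrivially.

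More precisely, I would argue by contraposition: given $w\in W$ with $w\neq 1$, I aim to exhibit some $a\in\Pi$ with $wa\neq a$, which forces $\phi_{\mathscr{C}}(w)\neq\mathrm{id}_V$ and hence $w\notin\ker\phi_{\mathscr{C}}$. First I would invoke the standard length-function theory of Coxeter systems: writing $w=r_{a_1}\cdots r_{a_k}$ as a reduced expression with $k=\ell(w)\geq 1$ and setting $a=a_k$, we obtain $\ell(wr_a)\leq k-1<\ell(w)$, so there is at least one simple root $a$ with $\ell(wr_a)<\ell(w)$.

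The key step is to show that such an $a$ satisfies $wa\in-\PLC(\Pi)$, i.e.\ $wa$ is a negative root. To see this, put $w'=wr_a$, so that $\ell(w'r_a)=\ell(w)>\ell(w')=\ell(wr_a)$, and in particular $\ell(w'r_a)\geq\ell(w')$. Proposition~\ref{pp:anu2} applied to $w'$ then yields $w'a\in\PLC(\Pi)$; but $w'a=wr_aa=-wa$ since $\rho_a a=-a$, so $-wa\in\PLC(\Pi)$, that is, $wa\in-\PLC(\Pi)$. Since $a\in\Pi\subseteq\PLC(\Pi)$ while $wa\in-\PLC(\Pi)$, and since (C2) guarantees $0\notin\PLC(\Pi)$, the vectors $a$ and $wa$ cannot coincide. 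Thus $\phi_{\mathscr{C}}(w)$ does not fix $a$, so $\phi_{\mathscr{C}}(w)\neq\mathrm{id}_V$ and $w\notin\ker\phi_{\mathscr{C}}$.

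I do not anticipate any serious obstacle here: the argument rests entirely on the one-directional statement of Proposition~\ref{pp:anu2} together with the elementary fact that every nonidentity element of $W$ admits a simple descent. The only point requiring a little care is to keep the two positivity signs straight — applying Proposition~\ref{pp:anu2} to $w'$ rather than to $w$, and then using $w'a=-wa$ to flip the sign — and to invoke (C2) at the end to conclude that a positive root and a negative root can never be equal.
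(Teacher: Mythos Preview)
Your proposal is correct and follows essentially the same approach as the paper: both arguments pick a simple descent $a$ of $w\neq 1$, set $w'=wr_a$, apply Proposition~\ref{pp:anu2} to $w'$ to get $w'a\in\PLC(\Pi)$, and then use $wa=-w'a$ together with (C2) to reach a contradiction. The only cosmetic difference is that the paper assumes $wa=a$ and derives $0=a+w'a\in\PLC(\Pi)$ directly, whereas you phrase it as $a\in\PLC(\Pi)$ versus $wa\in-\PLC(\Pi)$; both rest on the same closure of $\PLC(\Pi)$ under addition.
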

\begin{proof}
 All we need to show is that $\phi_{\mathscr{C}}$ is injective. Let $w\in W$
such that $w a=a$ for all $a\in \Pi$. If $w\neq 1$ then $\ell(w)\geq 1$, and so
we can write $w= w' r_a$ with $a\in \Pi$ and $\ell(w')=\ell(w)-1$. Since
$\ell(w' r_a)>\ell(w')$ the above proposition yields that $w'a \in \PLC(\Pi)$;
but then 
$$ a =w a =w'r_a a =w'(-a)=-w'a,$$
implying $0=a+w'a \in \PLC(\Pi)$, contradicting (C2) of the definition of a root
basis. 
\end{proof}

In particular, the above corollary yields that $(G_{\mathscr{C}}, \{\,\rho_a\mid
a\in \Pi\, \})$ is a Coxeter system isomorphic to $(W, R)$. We call $(W, R)$ the
\emph{abstract Coxeter system} associated to the Coxeter datum $\mathscr{C}$ and
we call $W$ a \emph{Coxeter group} of rank $\#R$, where $\#$ denotes
cardinality. 

\begin{definition}
\label{def:root system}
The \emph{root system} of $W$ in $V$ is the set 
$$\Phi=\{\,wa \mid \text{$w\in W$ and $a\in \Pi$}\,\}.$$
The set $\Phi^+=\Phi\cap \PLC(\Pi)$ is called the set of \emph{positive roots}
and the set $\Phi^-=-\Phi^+$ is called  the set of \emph{negative roots}.
\end{definition}
From Proposition \ref{pp:anu2} and Corollary \ref{co:anu2} we may readily deduce
that:
\begin{proposition}\textup{(\cite[Lecture 3]{RB96})}
\label{pp:anu3}
\rm{(i)}\quad Let $w\in W$ and $a\in \Pi$. Then 
 \begin{equation*}
\ell(wr_a) =
\begin{cases}
\ell(w)-1  \text{   if } wa\in \Phi^-\\
\ell(w)+1  \text{   if } wa\in \Phi^+.
\end{cases}
\end{equation*}

\noindent\rm{(ii)}\quad $\Phi=\Phi^+\biguplus\Phi^-$, where $\biguplus$ denotes
disjoint union.

\noindent\rm{(iii)}\quad $W$ is finite if and only if $\Phi$ is finite.
\qed
\end{proposition}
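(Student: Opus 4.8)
The plan is to deduce all three parts from Proposition~\ref{pp:anu2} and Corollary~\ref{co:anu2}, supplemented by two elementary facts about the word-length $\ell$: that $\ell(wr_a)\leq \ell(w)+1$ for every $w\in W$ and $a\in \Pi$ (append $r_a$ to a reduced word for $w$), and that $\ell(wr_a)$ and $\ell(w)$ have opposite parities (via the sign homomorphism $W\to\{\pm 1\}$ sending each $r_a\mapsto -1$, which is well-defined on the presentation); together these give $\ell(wr_a)=\ell(w)\pm 1$. I would also record at the start the two properties of $\PLC(\Pi)$ that do the real work: it is closed under addition, and by (C2) it does not contain $0$. Before touching (i) it is cleanest to prove the disjointness half of (ii): if some $x$ lay in $\Phi^+\cap\Phi^-$ then both $x$ and $-x$ would lie in $\PLC(\Pi)$, whence $0=x+(-x)\in\PLC(\Pi)$, contradicting (C2). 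This yields $\Phi^+\cap\Phi^-=\emptyset$, and hence the equivalences $wa\in\Phi^+\iff wa\in\PLC(\Pi)$ and $wa\in\Phi^-\iff wa\notin\PLC(\Pi)$ for roots $wa$, which I will use repeatedly.

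For part (i) I would work with the contrapositive of Proposition~\ref{pp:anu2}: if $wa\notin\PLC(\Pi)$ then $\ell(wr_a)<\ell(w)$, so by the $\pm1$ fact $\ell(wr_a)=\ell(w)-1$. If $wa\in\Phi^-$ this applies directly and gives $\ell(wr_a)=\ell(w)-1$. If instead $wa\in\Phi^+$, I would apply the same contrapositive not to $w$ but to $w'=wr_a$: since $w'a=w(r_a a)=-wa\in\Phi^-$ by disjointness, we obtain $\ell(w'r_a)=\ell(w')-1$; but $w'r_a=w$, so this reads $\ell(w)=\ell(wr_a)-1$, i.e.\ $\ell(wr_a)=\ell(w)+1$. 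Notice this argument needs only disjointness, not yet the fact that every root is positive or negative, so there is no circularity with (ii).

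The exhaustiveness half of (ii) then comes for free: given a root $x=wa$, either $\ell(wr_a)\geq\ell(w)$, in which case Proposition~\ref{pp:anu2} gives $x\in\PLC(\Pi)$ so $x\in\Phi^+$; or $\ell(wr_a)<\ell(w)$, in which case applying Proposition~\ref{pp:anu2} to $wr_a$ (here $\ell((wr_a)r_a)=\ell(w)>\ell(wr_a)$) gives $-x=(wr_a)a\in\PLC(\Pi)$, so $x\in\Phi^-$; thus $\Phi=\Phi^+\cup\Phi^-$, disjointly by the above. For (iii), the forward direction is immediate: if $W$ is finite then $R$, which is in bijection with $\Pi$, is a finite subset of $W$, so $\Pi$ is finite and $\Phi=\bigcup_{w\in W}w\Pi$ is a finite union of finite sets. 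For the converse I would use that $\Phi$ is $W$-stable, since $w(w'a)=(ww')a$, so the action restricts to a homomorphism $W\to\mathrm{Sym}(\Phi)$; its kernel consists of elements fixing every root, in particular fixing every $a\in\Pi$, and the argument already given in the proof of Corollary~\ref{co:anu2} shows such an element is $1$. Hence the homomorphism is injective and $\#W\leq\#\,\mathrm{Sym}(\Phi)<\infty$ when $\Phi$ is finite. The only genuinely delicate points are keeping the logical order honest—disjointness and the $\pm1$ length fact must precede (i), and the positive case of (i) must bootstrap through $w'=wr_a$ rather than $w$—and, for the converse in (iii), recognizing that faithfulness of the action on $\Phi$ is precisely what Corollary~\ref{co:anu2} supplies.
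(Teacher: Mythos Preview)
The paper does not actually supply a proof of this proposition: it is stated with a citation to Howlett's lecture notes and closed with a \qed, so there is nothing in the paper to compare your argument against. Your proof is correct and is essentially the standard derivation one expects from those notes---use the contrapositive of Proposition~\ref{pp:anu2} together with the parity fact $\ell(wr_a)=\ell(w)\pm1$ for~(i), bootstrap (ii) from the same proposition and (C2), and for (iii) exploit faithfulness exactly as in the proof of Corollary~\ref{co:anu2}.

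One small expository point: the biconditional $wa\in\Phi^-\iff wa\notin\PLC(\Pi)$ that you announce after proving disjointness is not yet available---its backward direction is precisely the exhaustiveness half of (ii). Fortunately your argument for (i) only ever uses the forward implication $wa\in\Phi^-\Rightarrow wa\notin\PLC(\Pi)$, which does follow from disjointness alone, so there is no genuine circularity; you might simply delay stating the full equivalence until after (ii) is established.
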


Let $T=\bigcup_{w\in W} w Rw^{-1}$, and we call it the set of \emph{reflections}
in $W$. For $x\in \Phi$, let $\rho_x\in \GL(V)$ be defined by the rule: $\rho_x
(v)=v-2(v,x)x$, for all $v\in V$. Since $x\in \Phi$, it follows that $x=wa$ for
some $w\in W$ and $a\in \Pi$. Direct calculations yield that $\rho_x
=(\phi_{\mathscr{C}} (w)) \rho_a (\phi_{\mathscr{C}} (w))^{-1}\in G_{\mathscr{C}}$.
Now let $r_x\in W$ such that $\phi_{\mathscr{C}}(r_x)=\rho_x$. Then $r_x = w r_a
w^{-1}\in T$ and we call it the reflection corresponding to $x$. It is readily
checked that $r_x =r_{-x}$ for all $x\in \Phi$ and $T=\{\,r_x \mid x\in
\Phi\,\}$. For each $t\in T$ we let $\alpha_t$ be the unique positive root with
the property that $r_{\alpha_t}=t$. It is also easily checked that there is a
bijection $T \leftrightarrow \Phi^+ $ given by $t \to \alpha_t $ ($t\in T$), and 
$x\to \phi_{\mathscr{C}}^{-1}(\rho_x)$ ($x\in \Phi^+ $). We call this bijection
the \emph{canonical bijection} between $T$ and $\Phi^+$.

For each $x\in \Phi^+$, as in \cite{BH93}, we define the \emph{depth} of $x$
relative to $R$ to
be $\dep(x)=\min\{\, \ell(w) \mid\text{$w\in W$ and $wx\in \Phi^-$}\, \}$. For
$x, y\in \Phi^+$,
we say that $x$ \emph{precedes} $y$, written $x\prec y$ if and only if the
following condition
holds: there exists $w\in W$ such that $y=wx$ and $\dep(y)=\ell(w)+\dep(x)$. It
is readily seen that precedence is a partial order on $\Phi^+$, and the next
result is
taken from \cite{BH93}:
\begin{lemma}\textup{(\cite[Lemma 1.7 ]{BH93}).}
\label{lem:pre}
Let $r\in R$ and $\alpha \in \Phi^+\setminus\{\alpha_r\}$. Then
\begin{equation*}
\dep(r\alpha) =
\begin{cases}
\dep(\alpha)-1  \text{   if } (\alpha, \alpha_r)>0,\\
\dep(\alpha) \ \ \ \ \ \text{         if } (\alpha, \alpha_r)=0,\\
\dep(\alpha)+1 \text{   if } (\alpha, \alpha_r)<0.
\end{cases}
\end{equation*}
\qed
\end{lemma}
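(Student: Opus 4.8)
The plan is to treat the three cases in turn, reducing everything to a single key implication. Write $a=\alpha_r\in\Pi$, so that $r=r_a$ and $r\alpha=\alpha-2(\alpha,a)a$. First I would record two easy facts. Since $\alpha\in\Phi^+\setminus\{a\}$, the root $\alpha$ has a strictly positive coordinate on some element of $\Pi\setminus\{a\}$ (the only positive root proportional to $a$ is $a$ itself), and as $r$ alters only the $a$-coordinate it follows that $r\alpha\in\Phi^+$. Secondly, if $w$ is of minimal length with $w\alpha\in\Phi^-$ then $(wr)(r\alpha)=w\alpha\in\Phi^-$, so $\dep(r\alpha)\le\ell(wr)\le\dep(\alpha)+1$; running the same argument with $r\alpha$ in place of $\alpha$ gives $|\dep(r\alpha)-\dep(\alpha)|\le1$. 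The case $(\alpha,a)=0$ is then immediate, since there $r\alpha=\alpha$. Finally, because the $W$-action preserves the form and $ra=-a$ we have $(r\alpha,a)=-(\alpha,a)$ and $r\alpha\in\Phi^+\setminus\{a\}$, so replacing $\alpha$ by $r\alpha$ interchanges the cases $(\alpha,a)>0$ and $(\alpha,a)<0$. Hence it suffices to prove the single implication that $(\alpha,a)>0$ forces $\dep(r\alpha)<\dep(\alpha)$; the $\pm1$ bound then upgrades this to the two required equalities.

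To prove that implication, set $d=\dep(\alpha)$; note that $(\alpha,a)>0$ forces $\alpha$ to be non-simple (distinct simple roots have non-positive inner product by (C1)), so $d\ge2$. The engine of the argument is the auxiliary claim that whenever $\ell(w)=\dep(\alpha)$ and $w\alpha\in\Phi^-$, in fact $w\alpha\in-\Pi$, i.e.\ $w$ carries $\alpha$ to the negative of a simple root. I would prove this by induction on $d$: stripping the last letter of a reduced word for $w$ as $w=w's$ with $s$ simple, minimality forces $s\alpha\in\Phi^+$ and $\dep(s\alpha)=d-1$ with $w'$ minimal for $s\alpha$, so the inductive hypothesis applied to $s\alpha$ yields $w\alpha=w'(s\alpha)\in-\Pi$. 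Granting this, fix a minimal $w$ and write $w\alpha=-a'$ with $a'\in\Pi$; since the form is preserved, $(\alpha,a)>0$ becomes $(a',wa)<0$. If $wa\in\Phi^-$ then $\ell(wr)=d-1$, and as $(wr)(r\alpha)=w\alpha\in\Phi^-$ we obtain $\dep(r\alpha)\le d-1$, as desired.

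The hard part is the remaining possibility $wa\in\Phi^+$, where the naive bound only yields $\dep(r\alpha)\le d$. My proposal here is to manufacture a \emph{different} minimal element that does send $a$ to a negative root. Writing $\delta=wa\in\Phi^+$, I would consider $w'=r_{a'}\,w\,r$. A two-step length computation shows $\ell(wr)=d+1$ (as $\delta\in\Phi^+$) and then $\ell(r_{a'}(wr))=d$, since $(wr)^{-1}a'=-r\alpha\in\Phi^-$; thus $\ell(w')=d$. A direct calculation gives $w'a=-\delta+2(a',\delta)a'$, and because $(a',\delta)<0$ every coordinate of this vector is non-positive, so $w'a\in\Phi^-$. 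It remains to check $w'\alpha\in\Phi^-$, so that $w'$ is again a minimal witness for $\alpha$: one computes $w'\alpha=(1+4(\alpha,a)(a',\delta))a'-2(\alpha,a)\delta$, and the point is that $\delta\neq a'$ (else $(a',\delta)=1>0$), so $\delta$ has positive support away from $a'$, which makes some coordinate of $w'\alpha$ strictly negative and forces $w'\alpha\in\Phi^-$. Then $w'$ is minimal for $\alpha$ with $w'a\in\Phi^-$, and the previous paragraph applies verbatim to $w'$ to give $\dep(r\alpha)\le d-1$. I expect the delicate points to be the bookkeeping of the two length computations (which rest on Proposition~\ref{pp:anu3}(i) and its left-handed analogue $\ell(sx)<\ell(x)\Leftrightarrow x^{-1}(\alpha_s)\in\Phi^-$) and the sign analysis confirming $w'\alpha,w'a\in\Phi^-$; the structural input that makes it all work is the induction establishing $w\alpha\in-\Pi$.
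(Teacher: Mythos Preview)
The paper does not supply its own proof of this lemma; it is quoted from \cite[Lemma~1.7]{BH93} and closed with a bare \qed. So there is nothing in the paper to compare against, and the question is only whether your argument stands on its own.

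It does. The reductions in your first paragraph (the $\pm1$ bound on depth change, the trivial case $(\alpha,a)=0$, and the symmetry $(\alpha,a)\leftrightarrow -(\alpha,a)$ under $\alpha\mapsto r\alpha$) are standard and correct. The auxiliary claim that a minimal-length $w$ with $w\alpha\in\Phi^-$ in fact lands $\alpha$ in $-\Pi$ is proved cleanly by your induction on $\dep(\alpha)$, and the case $wa\in\Phi^-$ is then immediate. In the remaining case $wa=\delta\in\Phi^+$, your construction $w'=r_{a'}wr$ together with the coefficient analysis showing $w'\alpha\in\Phi^-$ (using $\delta\neq a'$ to find support away from $a'$; in the paper's possibly-dependent setting this should be read via the canonical coefficients of Section~2) is correct.

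One small economy: the detour through ``$w'$ is a new minimal witness with $w'a\in\Phi^-$'' is not needed. Since $w'r=r_{a'}w$ and $w^{-1}a'=-\alpha\in\Phi^-$, you already have $\ell(r_{a'}w)=d-1$ directly; and your computation of $w'\alpha$ is exactly the computation of $(r_{a'}w)(r\alpha)$. So the single line ``$v=r_{a'}w$ has $\ell(v)=d-1$ and $v(r\alpha)=w'\alpha\in\Phi^-$'' finishes the case, and the separate verifications that $\ell(w')=d$ and $w'a\in\Phi^-$ can be dropped.
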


Define functions $N\colon W\to \mathcal{P}(\Phi^+)$ and
$\overline{N}\colon W\to \mathcal{P}(T)$ (where $\mathcal{P}$ denotes power set)
by setting
$N(w)=\{\,x\in \Phi^+\mid wx\in \Phi^-\,\}$ and
$\overline{N}(w)=\{\,t\in T \mid \ell(wt)<\ell(w)\,\}$ for all $w\in W$.
Standard arguments as those used in \cite{HM} yield that for each $w\in W$,
$\ell(w)=\#N(w)$ and $\overline{N}(w)=\{\,r_x\mid x\in N(w)\,\}$. In particular,
$N(r_a)=\{a\}$ for each $a\in \Pi$. Furthermore,
$\ell(wv^{-1})+\ell(v)=\ell(w)$, for some $w, v\in W$, if and only if
$N(v)\subseteq N(w)$.
  
A subgroup $W'$ of $W$ is a \emph{reflection subgroup} of $W$ if  $W'=\langle
W'\cap T\rangle$ ($W'$ is generated by the reflections that it contains). For
any reflection subgroup $W'$ of $W$, let 
\begin{align*}
S(W')&=\{\,t\in T\mid \overline{N}(t)\cap W'=\{t\}\,\}\\
\noalign{\hbox{and}}
\Delta(W')&=\{\,x\in \Phi^+\mid r_x\in S(W')\,\}.
\end{align*}
It was shown by Dyer in \cite{MD90} and Deodhar in \cite{Deo} that $(W', S(W'))$
forms a Coxeter system:

\begin{theorem}\textup{(Dyer)}
\label{th:croots}
\rm{(i)}\quad Suppose that $W'$ is a reflection subgroup of $W$. Then 
 $(W', S(W'))$ forms a Coxeter system, and furthermore,
$W'\cap T=\bigcup_{w\in W'}w S(W') w^{-1}$.

\noindent\rm{(ii)} Suppose that $W'$ is a reflection subgroup of $W$ and suppose
that $a, b\in \Delta(W')$ are distinct. Then 
$$
(a, b)\in \{\,-\cos(\pi/n)\mid \text{$n\in \N$ and $n\geq 2$}\,\}\cup (-\infty,
-1].
$$
And conversely if $\Delta$ is a subset of $\Phi^+$ satisfying the condition
that 
$$
(a, b)\in \{\,-\cos(\pi/n)\mid \text{$n\in \N$ and $n\geq 2$}\,\}\cup (-\infty,
-1]
$$
for all $a, b\in \Delta$ with $a\neq b$, then $\Delta =\Delta(W')$ for some
reflection subgroup $W'$ of $W$. In fact $W'=\langle \{\, r_a\mid a\in
\Delta\,\}\rangle$. 
\end{theorem}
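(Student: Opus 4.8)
The plan is to realize $(W',S(W'))$ as a Coxeter system by equipping $W'$ with a "relative inversion set" function mimicking $\overline{N}$ for the ambient group. First I would record the cocycle identity $\overline{N}(uv)=\overline{N}(v)\,\triangle\,v^{-1}\overline{N}(u)v$ for all $u,v\in W$ (with $\triangle$ the symmetric difference), a routine consequence of $N(w)=\{x\in\Phi^+\mid wx\in\Phi^-\}$ and $\overline{N}(w)=\{r_x\mid x\in N(w)\}$. Since $r_{w'x}=w'r_xw'^{-1}$, the set $W'\cap T$ is stable under $W'$-conjugation, so intersecting the cocycle identity with $W'\cap T$ yields the restricted identity $\overline{N}'(u'v')=\overline{N}'(v')\,\triangle\,v'^{-1}\overline{N}'(u')v'$ for $u',v'\in W'$, where $\overline{N}'(w'):=\overline{N}(w')\cap W'$. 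Put $\ell'(w')=\#\overline{N}'(w')$. By definition $t\in S(W')$ exactly when $\overline{N}'(t)=\{t\}$, so the restricted cocycle gives $\overline{N}'(w't)=\{t\}\,\triangle\,t\overline{N}'(w')t$, whence $\ell'(w't)=\ell'(w')\pm1$ for $t\in S(W')$; this is the length-change-by-one property a Coxeter length function must satisfy.

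The crux is a rank-two (dihedral) analysis. I would prove a Dihedral Lemma: for a dihedral reflection subgroup $D=\langle t,t'\rangle$ and any $w\in W$, the set $\overline{N}(w)\cap D$ is an initial segment of the reflections of $D$ ordered by depth, so if it is nonempty it contains a canonical generator of $D$, and the canonical generators have strictly smaller depth than the other reflections of $D$. This is where Proposition~\ref{pp:anu1} enters, through the explicit orbit formulas for $\rho_a\rho_b$ inside the rank-two subsystem spanned by $\alpha_t,\alpha_{t'}$. Granting it, any nonempty $\overline{N}'(w')$ contains a member of $S(W')$: take $t\in\overline{N}'(w')$ of minimal depth; if $t\notin S(W')$ there is $t'\in\overline{N}(t)\cap W'\setminus\{t\}$, and the Dihedral Lemma applied to $D=\langle t,t'\rangle$ produces a canonical generator of $D$ lying in $\overline{N}'(w')$ of strictly smaller depth, a contradiction. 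Combined with the companion fact—also extracted from the dihedral analysis—that every $w'\neq1$ inverts at least one reflection of $W'$ (so $\overline{N}'(w')\neq\emptyset$), I obtain for each $w'\neq1$ a generator $t\in S(W')\cap\overline{N}'(w')$ with $\ell'(w't)=\ell'(w')-1$. Induction on $\ell'$ then shows $S(W')$ generates $W'$ and that $\ell'$ is the word length relative to $S(W')$; feeding the restricted cocycle into the standard verification of the Exchange Condition shows $(W',S(W'))$ is a Coxeter system. Every $t\in W'\cap T$ lies in $\overline{N}'(t)$ and so is a reflection of this system, while the reflections of $(W',S(W'))$ are by construction the conjugates $\bigcup_{w'\in W'}w'S(W')w'^{-1}$, visibly inside $W'\cap T$; hence the two sets agree, giving part~(i).

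For part~(ii), the forward direction uses that $(W',S(W'))$ is a Coxeter system: for distinct $a,b\in\Delta(W')$ the canonical generators $r_a,r_b$ generate a dihedral reflection subgroup $D$ in which, by construction, $\{a,b\}$ is the simple system. A direct rank-two computation with Proposition~\ref{pp:anu1}, applied to the unit vectors $a,b$, then shows the simple inner product equals $-\cos(\pi/m)$ when $r_ar_b$ has finite order $m$ and is $\le-1$ when $r_ar_b$ has infinite order; the hypothesis that $a,b$ are \emph{simple} (not merely roots of $D$) is precisely what excludes spurious values $-\cos(k\pi/m)$ with $k>1$. For the converse, given $\Delta\subseteq\Phi^+$ with the stated inner-product condition, I would first verify that $(\,\spa(\Delta),\,\Delta,\,(\,,\,)\,)$ is a Coxeter datum: (C1) is exactly the hypothesis together with $(a,a)=1$, and (C2) holds because $\Delta\subseteq\PLC(\Pi)$, so a nontrivial nonnegative relation among elements of $\Delta$ would produce one among elements of $\Pi$, contradicting (C2) for $\Pi$. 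Thus $W'=\langle r_a\mid a\in\Delta\rangle$ is a Coxeter group with simple roots $\Delta$ and positive system $\Phi_{W'}\cap\PLC(\Delta)$. The key compatibility is that positivity is inherited, $\Phi_{W'}\cap\PLC(\Delta)=\Phi_{W'}\cap\Phi^+$ (an intrinsically negative root lies in $-\PLC(\Delta)\subseteq\Phi^-$), so the intrinsic relative inversion set of any $t\in W'\cap T$ coincides with $\overline{N}(t)\cap W'$. The standard fact that in a Coxeter system the reflections with singleton inversion set are exactly the simple reflections then gives $S(W')=\{t\in W'\cap T\mid\overline{N}(t)\cap W'=\{t\}\}=\{r_a\mid a\in\Delta\}$, that is, $\Delta=\Delta(W')$.

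The main obstacle is the Dihedral Lemma together with its two uses—locating a canonical generator inside an arbitrary nonempty relative inversion set, and guaranteeing such inversion sets are nonempty for $w'\neq1$. This is the one genuinely geometric ingredient, resting on the explicit rank-two orbit formulas of Proposition~\ref{pp:anu1}; the remaining steps are either cocycle bookkeeping or standard facts about Coxeter systems.
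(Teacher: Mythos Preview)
The paper does not actually prove this theorem: its ``proof'' consists solely of citations to Dyer's original paper, namely \cite[Theorem~3.3]{MD90} for part~(i) and \cite[Theorem~4.4]{MD90} for part~(ii). So there is no argument in the paper to compare against beyond the reference.

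Your sketch is, in outline, a faithful reconstruction of Dyer's original proof: the relative cocycle $\overline{N}'(w')=\overline{N}(w')\cap W'$, the dihedral (rank-two) analysis to locate a canonical generator inside any nonempty relative inversion set, induction on $\ell'=\#\overline{N}'$ to obtain the Exchange Condition, and the inner-product characterisation of simple roots for part~(ii) are exactly the ingredients Dyer uses. The one place where your write-up is a bit thin is the ``companion fact'' that $\overline{N}'(w')\neq\emptyset$ whenever $w'\neq1$; this does not follow from the Dihedral Lemma alone and in Dyer's argument is handled by inducting on the ambient length $\ell(w')$ (or equivalently by writing $w'$ as a product of reflections in $W'\cap T$ and unwinding the cocycle). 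With that step filled in, your proposal is correct and aligns with the cited source.
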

\begin{proof}
\rm{(i)}\quad \cite[Theorem 3.3]{MD90}.

\noindent\rm{(ii)}\quad \cite[Theorem 4.4]{MD90}.

\end{proof}

Suppose that $W'$ is a reflection subgroup of $W$ and suppose that $(\,,\,)'$ is
the restriction of $(\,,\,)$ on the subspace of $V$ spanned by $\Delta(W')$.
Then $\mathscr{C'}=(\,\spa(\Delta(W')),\, \Delta(W'),\, (\,,\,)'\,)$ is a
Coxeter datum with $(W', S(W'))$ being the associated abstract Coxeter system.
Consequently the notion of a root system applies to  $\mathscr{C'}$. We let
$\Phi(W')$, $\Phi^+(W')$ and $\Phi^-(W')$ be, respectively, the set of roots,
positive roots and negative roots for the datum $\mathscr{C'}$. Then it follows
from Definition~\ref{def:root system} that $\Phi(W')=W'\Delta(W')$,
$\Phi^+(W')=\Phi(W')\cap \PLC(\Delta(W'))$ and $\Phi^-(W')=-\Phi^+(W')$. Note
that Theorem~\ref{th:croots}~(i) yields that 
$$\Phi(W')=\{\,x\in \Phi\mid r_x \in W'\,\}.$$
We call $S(W')$ the set of \emph{canonical generators} of $W'$, and we call
$\Delta(W')$ the set of \emph{canonical roots} of $\Phi(W')$ (note that
$\Delta(W')$ forms a root basis for the Coxeter datum $\mathscr{C}'$). In this
paper a reflection subgroup $W'$ is called a \emph{dihedral reflection subgroup}
if $\#S(W')=2$. 

A subset $\Phi'$ of $\Phi$ is called a \emph{root subsystem} if $r_y x\in \Phi'$
whenever $x, y$ are both in $\Phi'$. It is easily seen that there is a bijective
correspondence between reflection subgroups $W'$ of $W$ and root subsystems
$\Phi'$ of $\Phi$ given by $W'\mapsto \Phi(W')$ and $\Phi'\mapsto \langle\{\,
r_x\mid x\in \Phi'\,\}\rangle$. 

Theorem \ref{th:croots} (ii) yields that if $a, b\in \Phi^+$ then $\{a, b\}$
forms the set of canonical roots for the dihedral reflection subgroup
$\langle\{r_a, r_b\}\rangle$ generated by $r_a$ and $r_b$ if and only if 
$(a, b) =-\cos(\pi/n)$ for some integer $n\geq 2$ or else $(a, b)\leq -1$.
Observe that in either of these cases, $\{a, b\}$ is linearly independent. In
the former case a similar calculation as in Proposition~\ref{pp:anu1}~(i) yields
that $(r_a r_b)^n$ acts trivially on $V$, furthermore, the dihedral reflection
subgroup $\langle \{r_a, r_b\}\rangle$ is finite. In the latter case, let
$\theta=\cosh^{-1}(-(a,b))$, and for each integer $i$, we employ the following 
notation throughout this paper:
\begin{equation}
\label{eq:c}
 c_i=
\begin{cases}
 \frac{\sinh (i\theta)}{\sinh\theta}, \text{ if $\theta \neq 0$};\\
 i,\quad\quad\text{ }\text{ if $\theta =0$}.
 \end{cases}
\end{equation}
Then a similar calculation as in
Proposition~\ref{pp:anu1}~(ii) yields that for each $i$, 
\begin{equation}
\label{eq:rank2}
 \left\{
\begin{array}{rl}
(r_a r_b)^i a &= c_{2i+1}a + c_{2i}b;\\
r_b(r_a r_b)^i a &= c_{2i+1}a + c_{2i+2}b;\\
(r_b r_a)^i b &= c_{2i}a +c_{2i+1}b;\\
r_a(r_b r_a)^i b &= c_{2i+2}a +c_{2i+1}b.
\end{array} \right.
\end{equation}
It is well known (and can be easily deduced from (\ref{eq:rank2})) that 
\begin{equation}
\label{eq:rootsub}
\Phi(\langle \{r_a, r_b\}\rangle) = \{\, c_i a + c_{i\pm 1} b \mid i \in \Z\,\}.
\end{equation}
Since $c_i>0$ for all $i>0$, it follows from (\ref{eq:rank2}) and the fact that
$\{a, b\}$ is linearly independent that $r_ar_b$ has infinite order, and
consequently $\langle\{r_a, r_b\}\rangle$ is an infinite dihedral reflection
subgroup of $W$. Observe that $c_i\neq c_j$ whenever $i\neq j$, hence
(\ref{eq:rank2}) yields that $a$ and $b$ are not conjugate to each other under
the action of $\langle \{r_a, r_b\}\rangle$, and consequently $\langle \{r_a,
r_b\}\rangle$ has two orbits on $\Phi(\langle\{r_a, r_b\}\rangle)$, one
containing $a$ and the other containing $b$. The root $c_i a+c_{i\pm 1}b$ lies
in the former orbit if and only if $i$ is odd, and it lies in the latter orbit
if and only if $i$ is even. 

For the rest of this section we assume that $a, b\in \Phi^+$ with $(a, b)\leq
-1$ and we keep all the notation of the preceding paragraph. 
\begin{proposition}
\label{pp:rank2}
Suppose that $W'$ is a reflection subgroup of the dihedral reflection subgroup 
$\langle \{r_a, r_b\} \rangle$. Then $\#S(W')\leq 2$.
\end{proposition}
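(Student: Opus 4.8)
The plan is to work entirely with the canonical roots. Under the canonical bijection $r_x\leftrightarrow x$ we have $\#S(W')=\#\Delta(W')$, so it suffices to bound the number of canonical roots of $W'$. The first observation is that these all live in a two-dimensional space: since $\Phi(W')=\{\,x\in\Phi\mid r_x\in W'\,\}\subseteq\Phi(\langle\{r_a,r_b\}\rangle)$ and the latter is contained in $U:=\spa\{a,b\}$ by (\ref{eq:rootsub}), and since $a,b$ are linearly independent, we get $\Delta(W')\subseteq\Phi^+(W')\subseteq U$ with $\dim U=2$. Thus the whole problem reduces to showing that a set of canonical roots inside a two-dimensional space can have at most two elements.

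Next I would record the two facts about $\Delta(W')$ furnished by its being a root basis for the datum $\mathscr{C}'$. By (C2), $0\notin\PLC(\Delta(W'))$; a standard argument (if $v$ and $-v$ both lay in the cone generated by $\Delta(W')$, then adding two nonnegative representations would exhibit a nontrivial nonnegative combination equal to $0$) then shows that the cone $\PLC(\Delta(W'))\cup\{0\}$ is pointed, i.e.\ meets its negative only in $0$. By (C1), pairing any putative relation $x=\sum_{y\neq x}\lambda_y y$ (with $\lambda_y\geq 0$) with $x$ forces $1=(x,x)=\sum_{y\neq x}\lambda_y(x,y)\leq 0$, which is absurd; hence no canonical root is a nonnegative linear combination of the other canonical roots.

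With these in hand the counting step is convex-geometric. Suppose, for contradiction, that $\Delta(W')$ contained three distinct roots $v_1,v_2,v_3$. Being pointed, the subcone $\PLC(\{v_1,v_2,v_3\})\cup\{0\}$ of the plane $U$ has angular opening strictly less than $\pi$, so the three vectors lie within a single open half-plane through the origin; ordering them by argument, the middle vector lies in the angular sector spanned by the two outer ones and is therefore a nonnegative combination of them. This contradicts the conclusion of the previous paragraph, so $\#\Delta(W')\leq 2$ and hence $\#S(W')\leq 2$.

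The step I expect to be the genuine obstacle --- and the one where the axioms do their work --- is this counting step, not the dimension count. It is tempting to argue simply that a two-dimensional space holds at most two linearly independent vectors; but root bases in the sense of Definition~\ref{def:datum} need not be linearly independent, only pairwise so (as noted after the definition), so three pairwise-independent canonical roots spanning $U$ are not excluded on purely linear-algebraic grounds. The content lies in combining pointedness with the (C1)-consequence to rule out a ``middle'' root, and it is worth noting that this argument never uses the hypothesis $(a,b)\leq -1$: it applies verbatim to reflection subgroups of \emph{any} dihedral reflection subgroup.
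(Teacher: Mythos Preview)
Your argument is correct and genuinely different from the paper's. The paper works under the standing hypothesis $(a,b)\leq -1$ and uses the explicit parametrisation $\Phi(\langle\{r_a,r_b\}\rangle)=\{\,c_ia+c_{i\pm1}b\mid i\in\Z\,\}$: assuming three canonical roots $x,y,z$, it runs a short case analysis on the $\pm$ signs and computes, via the hyperbolic identities for the $c_i$, that some pair among them has inner product $\cosh((m-n)\theta)\geq 1$, contradicting Theorem~\ref{th:croots}~(ii). Your route is entirely abstract: you only use that $\Delta(W')$ lies in a two-dimensional space and is a root basis, then invoke (C2) for pointedness and (C1) for the ``no middle root'' conclusion, finishing with a planar convex-cone argument. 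This buys you more generality---as you note, nothing in your proof uses $(a,b)\leq -1$, so it covers finite dihedral subgroups as well, and indeed any reflection subgroup whose roots happen to span a plane. The paper's computational approach, by contrast, dovetails with the explicit canonical-root calculations in equations~(\ref{eq:canrt1}) and~(\ref{eq:canrt2}) that immediately follow and are needed in Section~\ref{sec:dom}, so it is the natural choice in context even if conceptually heavier for this particular statement.
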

\begin{proof}
Suppose for a contradiction that there are at least three canonical generators 
$x$, $y$ and $z$ for the subsystem $\Phi'$. Then from (\ref{eq:rootsub}) we know
that  there are three integers $m$, $n$ and $p$ with $x= c_m a + c_{m\pm 1} b$,
$y= c_n a + c_{n\pm 1} b$ and $z= c_p a + c_{p \pm 1}b$. If either
\begin{equation*}
 \left\{
\begin{array}{rl}
x&=c_m a + c_{m+1}b\\
y&=c_{n}a + c_{n+1}b
\end{array} \right.\quad\text{or}\quad
\left\{
\begin{array}{rl}
x&=c_m a + c_{m-1}b\\
y&=c_{n}a + c_{n-1}b
\end{array} \right.,
\end{equation*}
then either $(x,y)=\cosh((m-n)\theta)\geq 1$ (if $\theta \neq 0$), or else $(x, y)=1$ 
(if $\theta =0$), resulting in a contradiction to Theorem
\ref{th:croots} (ii). Without loss of generality, we may assume that $x=c_m a+
c_{m+1}b$ and $y=c_n a +c_{n-1}b$. Now if $z=c_p a+c_{p+1}b$, then a short
calculation yields that either $(x,z) =\cosh((m-p)\theta)\geq 1$ (if $\theta \neq 0$), or else
$(x, z)=1$ (if $\theta =0$), a contradiction to
Theorem~\ref{th:croots}~(ii); on the other hand if $z=c_p a+ c_{p-1}b$ then, as before, either $(z,
y) =\cosh((n-p)\theta)\geq 1$ (if $\theta \neq 0$), or else $(z, y)=1$ (if $\theta =0$), 
again a contradiction to
Theorem~\ref{th:croots}~(ii). 
\end{proof}

We close this section with an explicit calculation of the canonical roots for an
arbitrary dihedral reflection subgroup of $\langle\{r_a, r_b\}\rangle$. These
technical results will be used in Section~\ref{sec:dom}. Let $\theta=\cosh^{-1}(-(a, b))$, 
as before.

Suppose that $x= c_m a+ c_{m+1}b$ and $y=c_n a + c_{n-1}b$ are positive roots in
$\Phi(\langle \{r_a, r_b \}\rangle)$ (that is, $m$ is a non-negative integer and
$n$ is a positive integer). Then either $(x,y)=-\cosh((m+n)\theta)\leq -1$ 
(when $\theta \neq 0$), or else $(x, y) = -1$ (when $\theta =0$), and it
follows from Theorem~\ref{th:croots}~(ii) that $\{x, y\} =\Delta (\langle \{
r_x, r_y\}\rangle)$.

Suppose that $x = c_m a + c_{m+1}b$ and $y = c_n a + c_{n+1}b$ are roots in
$\Phi(\langle \{r_a, r_b\}\rangle)$ (with $n<m \in \Z$). Put $d=m-n$.
Proposition~\ref{pp:anu1}~(ii) yields that
\begin{equation}
 \label{eq:rt1}
\Phi(\langle \{r_x, r_y\} \rangle)=\{\, c_{kd-m}a +c_{kd-m-1}b,
c_{kd+m}a+c_{kd+m+1}b \mid k \in \Z \,\}.
\end{equation}
Let $\alpha$, $\beta$ be the canonical roots for this root subsystem. Then we
claim that $\alpha = c_i a+c_{i-1}b$ and $\beta=c_j a+c_{j+1}b$ for some
positive integer $i$ and nonnegative integer $j$. Indeed, (\ref{eq:rootsub})
yields that the only other possibilities are either
\begin{equation*}
 \left\{
\begin{array}{rl}
\alpha&=c_i a + c_{i+1}b\\
\beta&=c_{j}a + c_{j+1}b
\end{array} \right.\quad\text{or}\quad
\left\{
\begin{array}{rl}
\alpha&=c_i a + c_{i-1}b\\
\beta&=c_{j}a + c_{j-1}b
\end{array} \right.,
\end{equation*}
and in either of these two cases, either $(\alpha, \beta)=\cosh((i-j)\theta)\geq 1$,
 or else $(\alpha, \beta)=1$, both contradicting Theorem~\ref{th:croots}~(ii). 
Therefore our claim holds, and in
view of (\ref{eq:rt1}) we have
\begin{equation}
\label{eq:canrt1}
\left\{
\begin{array}{rl}
\alpha&=c_{k_1(m-n)-m}a+c_{k_1(m-n)-m-1}b\\
\beta &=c_{k_2(m-n)+m}a +c_{k_2(m-n)+m+1}b
\end{array} \right.,
\end{equation}
for some integers $k_1$ and $k_2$. In fact, $k_1$ and $k_2$ satisfy the
condition that $k_1(m-n)-m$ is the smallest positive integer of this form and
$k_2(m-n)+m$ is the smallest non-negative integer of this form.

Suppose that $x=c_{m+1}a+c_m b$ and $y=c_{n+1}a+c_n b$ are roots in
$\Phi(\langle \{r_a, r_b\}\rangle)$ (with $n,m\in \Z$). Put $d=m-n$.
Interchanging the roles of $a$ and $b$ in the preceding paragraph, we see that
\begin{equation}
 \label{eq:rt2}
\Phi(\langle \{r_x, r_y\} \rangle)=\{\, c_{ld+m+1}a +c_{ld+m}b,
c_{ld-m-1}a+c_{ld-m}b \mid k \in \Z \,\}.
\end{equation}

Let $\alpha'$, $\beta'$ be the canonical roots for this root subsystem. Exactly
the same reasoning as in the preceding paragraph yields that 
\begin{equation}
\label{eq:canrt2}
\left\{
\begin{array}{rl}
\alpha'&=c_{l_1(m-n)+m+1}a+c_{l_1(m-n)+m}b\\
\beta'&=c_{l_2(m-n)-m-1}a+c_{l_2(m-n)-m}b
\end{array} \right.,
\end{equation}
for some integers $l_1$ and $l_2$. Indeed $l_1$ and $l_2$ satisfy the conditions
that $l_1(m-n)+m$ is the smallest non-negative integer of this form and
$l_2(m-n)-m$ is the smallest positive integer of this form.

\section{Canonical Coefficients}

For a Coxeter datum $\mathscr{C}=(\, V,\, \Pi, \, (\,,\,)\,)$, since $\Pi$ may
be linearly dependent, the expression of a root in $\Phi$ as a linear
combination of elements of $\Pi$  may not be unique. Thus the concept of the
coefficient of an element of $\Pi$ in any given root in $\Phi$  is potentially
ambiguous. This section gives a canonical way of expressing a root in $\Phi$  as
a linear combination of elements from $\Pi$. This canonical expression follows 
from a standard construction similar to the one considered in \cite{HT97}. 

Given a Coxeter datum $\mathscr{C}=(\, V,\, \Pi, \, (\,,\,)\,)$, let $E$ be a
vector space over $\R$ with basis $\Pi_E=\{\,e_a\mid a\in \Pi\,\}$ in bijective
correspondence with $\Pi$ and let $(\,,\,)_E$ be the unique bilinear form on $E$
satisfying 
$$
(e_a, e_b)_E =(a, b) \text{, for all } a, b\in \Pi.
$$
Then $\mathscr{C}_E=(\, E,\, \Pi_E, \, (\,,\,)_E\,)$ is a Coxeter datum.
Moreover, $\mathscr{C}_E$ and $\mathscr{C}$ are
associated to the same abstract Coxeter system $(W, R)$. Corollary~\ref{co:anu2}
yields that 
$\phi_{\mathscr{C}_E}\colon W\to G_{\mathscr{C}_E}=
\langle\{\, \rho_{e_a}\mid a\in \Pi\,\}\rangle$ is an isomorphism.
Furthermore,
$W$ acts faithfully on on $E$ via $r_a y =\rho_{e_a} y$ for all $a\in \Pi$ and
$y\in E$.

Let $f\colon E\to V$ be the unique linear map satisfying $f(e_a)= a$, for all
$a\in \Pi$. It is readily checked that $(f(x), f(y))=(x, y)_E$, for all $x, y\in
E$. 
Now  for all $a\in \Pi$ and $y\in E$, 
\begin{align*}
 r_a(f(y))=\rho_a(f(y))=f(y)-2(f(y), a)a &=f(y)-2(f(y), f(e_a))f(e_a)\\
                                         &=f(y-2(y, e_a)_E e_a)\\
                                         &=f(\rho_{e_a}y)\\
                                         &=f(r_a y).
\end{align*}
Then it follows that $wf(y)=f(wy)$, for all $w\in W$ and all $y\in E$, since
$W$ is generated by $\{\,r_a\mid a\in \Pi\,\}$.
Let $\Phi_E$ denote the root system associated to the datum $\mathscr{C}_E$, 
and let $\Phi_E^+$ (respectively, $\Phi_E^-$) denote the corresponding set of 
positive roots (respectively, negative roots). Then a similar 
reasoning as that of Proposition~2.9 of \cite{HT97} enables us to have:

\begin{proposition}
\label{pp:eqv}
The restriction of $f$ defines a $W$-equivariant bijection $\Phi_E\to \Phi$.
\end{proposition}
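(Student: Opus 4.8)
The plan is to verify three things about $f$: that it carries $\Phi_E$ into $\Phi$, that the resulting map onto $\Phi$ is surjective, and that it is injective. The $W$-equivariance $wf(y)=f(wy)$ has already been established in the computation preceding the statement, so it remains only to confirm that the restriction is a set-theoretic bijection. Both the ``into'' and ``onto'' assertions are then immediate consequences of equivariance: every element of $\Phi_E$ has the form $we_a$ with $w\in W$ and $a\in\Pi$, and $f(we_a)=wf(e_a)=wa\in\Phi$, while conversely every $wa\in\Phi$ equals $f(we_a)$ with $we_a\in\Phi_E$. Hence $f(\Phi_E)=\Phi$, and the only genuine content is injectivity.

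For injectivity I would exploit the fact that $\mathscr{C}_E$ and $\mathscr{C}$ share the same abstract Coxeter system $(W,R)$, and therefore the same length function $\ell$ and the same reflection set $T$. First I would record that $f$ preserves and reflects positivity. Since $f(\PLC(\Pi_E))\subseteq\PLC(\Pi)$ we already have $f(\Phi_E^+)\subseteq\Phi^+$; more precisely, for a root $x=we_a$, Proposition~\ref{pp:anu3}~(i) applied to $\mathscr{C}_E$ gives $x\in\Phi_E^+$ exactly when $\ell(wr_a)>\ell(w)$, while the same proposition applied to $\mathscr{C}$ gives $f(x)=wa\in\Phi^+$ under the identical inequality $\ell(wr_a)>\ell(w)$. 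As $\ell$ is common to both data, this shows $x\in\Phi_E^+$ if and only if $f(x)\in\Phi^+$.

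The decisive observation is that the reflection attached to a root is the same group element in either realization. For $x=we_a$ the reflection in $\mathscr{C}_E$ is $r_x=wr_aw^{-1}$, and the reflection in $\mathscr{C}$ attached to $f(x)=wa$ is likewise $wr_aw^{-1}$; thus $r_x=r_{f(x)}$ in $T$. Now suppose $x_1,x_2\in\Phi_E$ with $f(x_1)=f(x_2)$. Then $r_{x_1}=r_{f(x_1)}=r_{f(x_2)}=r_{x_2}$, and since within $\Phi_E$ a reflection determines its root up to sign (because $r_x=r_{-x}$ and $T\leftrightarrow\Phi_E^+$ is a bijection), we get $x_1=\pm x_2$. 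The positivity criterion of the previous paragraph forces the two signs to agree, whence $x_1=x_2$. Equivalently, writing $\alpha_t$ and $\alpha_t^{E}$ for the canonical positive roots attached to $t\in T$ in $\mathscr{C}$ and $\mathscr{C}_E$ respectively, the identities above yield $f(\alpha_t^{E})=\alpha_t$ for every $t\in T$, so $f$ restricts to a bijection $\Phi_E^+\to\Phi^+$; combining this with $f(-x)=-f(x)$ and the disjoint decompositions of Proposition~\ref{pp:anu3}~(ii) extends it to a bijection $\Phi_E\to\Phi$.

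I expect the main obstacle to be injectivity, the subtle point being that $f$ is generally \emph{not} injective on all of $E$ — its kernel is nontrivial precisely when $\Pi$ is linearly dependent, which is the whole reason for passing to $E$ — so injectivity can only be recovered after restricting to roots. The mechanism that makes this work is the interplay just described: the shared length function forces positivity to be both preserved and reflected, while the formula $r_{we_a}=wr_aw^{-1}$ shows that the root-to-reflection assignment commutes with $f$. The one step demanding care is confirming that the sign ambiguity $x_1=\pm x_2$ is genuinely resolved, that is, that a root and its negative cannot both be sent to the same positive root; this is exactly what the positivity criterion of the second paragraph guarantees.
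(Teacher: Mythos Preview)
Your proposal is correct and follows essentially the same approach as the paper: both arguments establish $f(\Phi_E)=\Phi$ and $f(\Phi_E^+)=\Phi^+$ via equivariance and the shared length function, then prove injectivity on positive roots by observing that $f(x_1)=f(x_2)$ forces $r_{x_1}=r_{x_2}$ in $T$, whence $x_1=x_2$ by the canonical bijection $T\leftrightarrow\Phi_E^+$. The only cosmetic difference is that the paper phrases the reflection comparison through the isomorphisms $\phi_{\mathscr{C}}$ and $\phi_{\mathscr{C}_E}$, whereas you write out $r_{we_a}=wr_aw^{-1}$ directly; your added remarks on resolving the sign ambiguity and on why injectivity fails on all of $E$ are helpful elaborations but not departures from the paper's line of reasoning.
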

\begin{proof}
Since $f(w e_a)=w a$ for all $w\in W$ and $a\in \Pi$, it follows that 
$f(\Phi_E)=\Phi$. Proposition~\ref{pp:anu3} applied to $\mathscr{C}_E$ 
yields that, $w e_a\in \Phi_E^+$ if and only if 
$\ell(w r_a)=\ell(w)+1$, and this  happens if and only if
$w a\in \Phi^+$, so $f(\Phi_E^+)=\Phi^+$.
We are done if we can show that the restriction of $f$ on $\Phi_E^+$ is injective.
Suppose that there are $x, y\in \Phi_E^+$ with $f(x)=f(y)$. Then
$
\phi_{\mathscr{C}}\phi_{\mathscr{C}_E}^{-1}(\rho_x)=\rho_{f(x)}=
\rho_{f(y)}=\phi_{\mathscr{C}}\phi_{\mathscr{C}_E}^{-1}(\rho_y)
$.
Since $\phi_{\mathscr{C}}$ is an isomorphism, it follows that 
$\phi_{\mathscr{C}_E}^{-1}(\rho_x)=\phi_{\mathscr{C}_E}^{-1}(\rho_y)$,  
that is, $x$ and $y$ correspond to the same reflection in $W$. 
Since $x, y\in \Phi_E^+$, it follows that $x=y$, as required. 
%Since $f(w e_a)=w a$ for all $w\in W$ and $a\in \Pi$, it follows that $f$ 
%restricts to a map $f'\colon \Phi_E\to \Phi$. Further, if 
%$w e_a\in \Phi_E^+$, then Proposition~\ref{pp:anu3} applied to the Coxeter 
%datum $\mathscr{C}_E$ yields that
%$\ell(w r_a)=\ell(w)+1$, which then gives $wa\in \Phi^+$. Thus 
%$f'$ restricts to a map $f''\colon \Phi_E^+\to \Phi^+$. Observe 
%that to prove the required result, it suffices to prove that $f''$ is bijective, 
%and this bijectivity follows since $f''$  is compatible with the canonical 
%bijections from $\Phi_E^+$ and $\Phi^+$ to the reflections on $W$.
\end{proof}

Since $\Pi_E$ is linearly independent, it follows that each root $y\in \Phi_E$
can be written uniquely as $\sum_{a\in \Pi} \lambda_a e_a$; we say that
$\lambda_a$ is the \emph{coefficient} of $e_a$ in $y$ and it is denoted by
$\coeff_{e_a}(y)$. We use this fact together with the $W$-equivariant bijection
$f\colon \Phi_E\leftrightarrow \Phi$ to give a canonical expression of a root in
$\Phi$ in terms of $\Pi$:

\begin{definition}
 Suppose that $x\in \Phi$. For each $a\in \Pi$, define the \emph{canonical
coefficient} of $a$ in $x$, written $\coeff_a(x)$ by requiring that
$\coeff_a(x)=\coeff_{e_a}(f^{-1}(x))$. The \emph{support}, written $\supp(x)$ is
the set of $a\in \Pi$ with $\coeff_a(x)\neq 0$.
\end{definition}

\section{The Dominance Hierarchy}
\label{sec:dom}

\begin{definition}
  \label{def:dn}
\rm{(i)}\quad  For $x$ and $y \in \Phi$, we say that $ x$ \emph{dominates} $y$
with respect to $W$ if
 $\{\, w \in W \mid w x \in \Phi^{-} \,\} \subseteq \{\, w \in W \mid w y \in
\Phi^{-} \,\}$. If $x$ dominates $y$ with respect to $W$ then we write $x\dom
y$.

\noindent\rm{(ii)}\quad For each $x\in \Phi^+$, set
 $
D(x)=\{\, y \in \Phi^+ \mid y\neq x \text{ and } x \dom y\,\},
$
and if $x\in \Phi^+$ and $D(x)=\emptyset$ then $x$ is called \emph{elementary}.
For each $n\in \N$, define
$
D_n=\{\,x\in\Phi^+\bigm\vert \#D(x)=n\,\}
$.

\end{definition}

Note that $D_0$ here is the same set as $\E$ of \cite{BH93} and \cite{BB98}.  
In \cite{BH93} and \cite{BB98} dominance is only defined on $\Phi^+$, and it is
found in \cite{BH93} that dominance is a partial order on $\Phi^+$. Here we have
generalized the notion of dominance to the whole of $\Phi$, as was considered in, 
for example, \cite{HT97}. It can be readily
seen that this generalized dominance is a partial order on $\Phi$. 
Observe that it is clear from the above definition that 
\begin{equation*}
%\label{eq:dcomp*}
\Phi^+ = \biguplus_{n\in \N} D_n.
\end{equation*}
The set $D_0$ has been properly investigated in \cite{BH93} and \cite{BB98}: if
$W$ is finite then $D_0 =\Phi^+$ (that is, if $W$ is finite, then there is no
non-trivial dominance among its roots), whereas if $W$ is an infinite Coxeter
group of finite rank, then $\# D_0 < \infty$ and furthermore, we can explicitly
compute $D_0$. Observe that in the latter case $\biguplus_{n \in \N, n \geq 1}
D_n$ will be an infinite set. One major result of this paper
(Theorem~\ref{th:dnfinite} below)
is that if $R$ is finite then $D_n$ is finite for all natural numbers~$n$. We
also give upper and lower bounds
on~$\#D_n$ (Corollary~\ref{cor:cor7} and Corollary~\ref{cor:c19} below). But
first we need a few elementary results:

\begin{lemma}
  \label{lem:basicdom}
\rm{(i)}\quad If $x$ and $y \in \Phi^+$, then $x \dom y$ if and only if $(x,y)
\geq 1$ and $\dep(x)\geq \dep(y)$ (with equality on depth if and only if $x=y$).

\noindent\rlap{\rm{(ii)}}\qquad Dominance is $W$-invariant: if $x \dom y$ then
    $wx \dom wy$ for any $w \in W$.

\noindent\rlap{\rm{(iii)}}\qquad Suppose that $x, y \in \Phi$, and $x \dom y$.
Then
    $-y \dom -x$.

\noindent\rlap{\rm{(iv)}}\qquad Suppose that $x\in \Phi^+$ and $y\in \Phi^-$.
Then $x\dom y$ if and only if $(x,y)\geq 1$. 

\noindent\rlap{\rm{(v)}}\qquad Let $x,y\in \Phi$. Then there is dominance
between $x$ and $y$ if and only if $(x,y)\geq 1$.

\end{lemma}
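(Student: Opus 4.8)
The plan is to reduce the equivalence to a short case analysis on the signs of $x$ and $y$, relying on parts~(i), (iii) and~(iv) already established, plus one elementary remark. Throughout I read ``there is dominance between $x$ and $y$'' as ``$x\dom y$ or $y\dom x$'', and I note that both this condition and the inequality $(x,y)\geq1$ are symmetric in $x$ and $y$, since $(\,,\,)$ is symmetric. The elementary remark I would record first is that a negative root never dominates a positive one: if $x\in\Phi^-$ and $y\in\Phi^+$, then $w=1$ satisfies $wx=x\in\Phi^-$ but $wy=y\notin\Phi^-$, so by Definition~\ref{def:dn}~(i) we have $x\not\dom y$.

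The substantive case is $x,y\in\Phi^+$. Here the forward implication is immediate from part~(i). For the converse, assume $(x,y)\geq1$; the only thing to check is that the depths are comparable in the strict sense required by~(i). I claim that if in addition $x\neq y$, then $\dep(x)\neq\dep(y)$: for otherwise $\dep(x)\geq\dep(y)$ would force $x\dom y$ by part~(i), and the equality clause of~(i) would then yield $x=y$, a contradiction. Granting the claim, relabel so that $\dep(x)>\dep(y)$; then $(x,y)\geq1$ and $\dep(x)\geq\dep(y)$ give $x\dom y$ by part~(i). The degenerate subcase $x=y$ is trivial, since $x\dom x$ always holds and $(x,x)=1\geq1$.

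The remaining cases are pure bookkeeping. If $x,y\in\Phi^-$, then $-x,-y\in\Phi^+$, and part~(iii) shows that there is dominance between $x$ and $y$ if and only if there is dominance between $-x$ and $-y$; applying the both-positive case to $-x,-y$ and using $(-x,-y)=(x,y)$ settles this case. If exactly one of $x,y$ is positive, say $x\in\Phi^+$ and $y\in\Phi^-$ (a harmless relabelling by symmetry), then the preliminary remark excludes $y\dom x$, so the only available dominance is $x\dom y$, which by part~(iv) holds exactly when $(x,y)\geq1$. Assembling the three cases yields the stated equivalence for all $x,y\in\Phi$.

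I expect the only genuine obstacle to be the both-positive case, and specifically the step that rules out distinct positive roots of equal depth with $(x,y)\geq1$; this is precisely where the equality clause of part~(i) does the work of upgrading the asymmetric depth-comparison criterion of~(i) into the symmetric ``dominance between'' statement. Once that is in hand, parts~(iii) and~(iv) reduce everything else to the both-positive case by elementary sign manipulations.
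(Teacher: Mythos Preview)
Your proof of part~(v) is correct and follows the same case-by-sign strategy as the paper, which also deduces~(v) from~(i) and~(iv) (and, implicitly,~(iii)). If anything, you have been more careful than the paper: you explicitly use the equality clause of~(i) to upgrade the asymmetric criterion to the symmetric ``dominance between'' statement for two positive roots, and you explicitly invoke~(iii) to pass from the both-negative case to the both-positive case, whereas the paper compresses both of these into the phrase ``part~(i) yields''.
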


\begin{proof}

\noindent\rlap{\rm{(i)}}\qquad Essentially the same reasoning as in \cite[Lemma
2.3]{BH93} applies.
  
\noindent\rlap{\rm{(ii)}}\qquad Clear from the definition of dominance.
  
\noindent\rlap{\rm{(iii)}}\qquad Suppose for a contradiction that there exists
$w \in W$
    such that $w(-y) \in \Phi^-$ and $w(-x) \in \Phi^+$. Then $w(y)
    \in \Phi^+$ yet $w(x) \in \Phi^-$, contradicting the assumption
    that $x \dom y$.

\noindent\rlap{\rm{(iv)}}\qquad Suppose that $x\dom y$. Since dominance is
$W$-invariant, it follows that $r_y x\dom r_y y\in \Phi^+$ and hence $r_y x\in
\Phi^+$. Now part (i) yields that $(r_y x, r_y y)\geq 1$. Since $(\,,\,)$ is
$W$-invariant, it follows that $(x, y)\geq 1$.

Conversely, suppose that $x\in \Phi^+$ and $y\in \Phi^-$ with $(x, y)\geq 1$. Then
clearly $r_y x=x-2(x, y)y \in \Phi^+$. Thus $r_yx$ and $r_y y=-y$ are both
positive. Then it follows from part (i) that there is dominance between $r_yx$
and $r_yy$. Since dominance is $W$-invariant, it follows that there is dominance
between $x$ and $y$. Finally, given that  $x\in \Phi^+$ and $y\in \Phi^-$, it is
clear that $x\dom y$.

\noindent\rlap{\rm{(v)}}\qquad Suppose that $x, y\in \Phi^-$. Then part (i)
yields that there is dominance between $-x$ and $-y$ if and only if $(-x,
-y)=(x,y)\geq 1$. This combined with part~(i) and part~(iv) above yields the
desired result.

\end{proof}

The following is a simple result that we use repeatedly in this paper:
\begin{lemma}
  \label{lem:reflx} Let $x, y \in \Phi$ be distinct with
$x \dom y$ and $y \in D_0$. Then:
  
\noindent\rlap{\rm{(i)}}\qquad  $r_y x \in \Phi^+$;

  \noindent\rlap{\rm{(ii)}}\qquad $(r_y x, x) \leq -1$ and $(r_y x, y) \leq -1$,
and in particular, $r_y x$ cannot dominate either $x$ or $y$. 
\end{lemma}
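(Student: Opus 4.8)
The plan is to prove (i) and (ii) separately, extracting the single numerical fact $(x,y)\geq 1$ at the outset, since this inequality drives the whole argument. Because $x\dom y$, Lemma~\ref{lem:basicdom}~(v) immediately gives $(x,y)\geq 1$. Note also that $r_y x\in\Phi$ (as $W$ permutes $\Phi$), so $r_y x$ is either positive or negative, and that $y\in D_0$ means precisely $D(y)=\emptyset$, that is, $y$ dominates no positive root other than itself.

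For part (i) I would argue by contradiction, exploiting the elementariness of $y$. First, $W$-invariance of dominance (Lemma~\ref{lem:basicdom}~(ii)) applied with $w=r_y$ turns $x\dom y$ into $r_y x\dom r_y y=-y$. Now suppose toward a contradiction that $r_y x\in\Phi^-$. Reversing signs via Lemma~\ref{lem:basicdom}~(iii) converts $r_y x\dom -y$ into $y\dom -r_y x$, and here $-r_y x\in\Phi^+$ precisely because $r_y x$ was assumed negative. The degenerate possibility $-r_y x=y$ is discarded directly: it forces $r_y x=-y$, whence $x=r_y(r_y x)=r_y(-y)=y$, contradicting the hypothesis that $x$ and $y$ are distinct. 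Therefore $-r_y x$ is a positive root different from $y$ that is dominated by $y$, that is, $-r_y x\in D(y)$, contradicting $y\in D_0$. Hence $r_y x\in\Phi^+$.

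Part (ii) is then a pure computation using $(y,y)=1$ and $(x,x)=1$ from (C1) together with $(x,y)\geq 1$. Expanding $r_y x=x-2(x,y)y$ gives $(r_y x,y)=(x,y)-2(x,y)=-(x,y)\leq -1$ and $(r_y x,x)=1-2(x,y)^2\leq -1$, the latter because $(x,y)\geq 1$ forces $2(x,y)^2\geq 2$. Finally, since both inner products are at most $-1<1$, Lemma~\ref{lem:basicdom}~(v) shows there is no dominance in either direction between $r_y x$ and $x$, nor between $r_y x$ and $y$; in particular $r_y x$ dominates neither.

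The only genuinely delicate point is part (i): one must combine the $W$-invariance of dominance with the sign-reversal symmetry to manufacture a \emph{positive} root dominated by $y$, and then carefully eliminate the degenerate case $-r_y x=y$ so that the elementariness of $y$ can be brought to bear. Everything after that is arithmetic.
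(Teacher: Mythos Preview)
Your proof is correct and follows essentially the same route as the paper's: for (i) you combine $W$-invariance of dominance with the sign-reversal symmetry (Lemma~\ref{lem:basicdom}~(ii),(iii)) to force $y\dom -r_y x$ and reach a contradiction with $y\in D_0$, and for (ii) you perform the same inner-product computations using $(x,y)\geq 1$. The only difference is that you spell out the degenerate case $-r_y x=y$ and the appeal to $(x,x)=1$ a bit more explicitly than the paper does.
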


\begin{proof}
  \noindent\rlap{\rm{(i)}}\qquad Suppose for a contradiction that $r_y x \in
\Phi^-$.  Lemma~\ref{lem:basicdom}~(ii) then yields that $r_y x \dom r_y y =
-y$. Now Lemma \ref{lem:basicdom} (iii) yields that $y \dom -r_yx \in \Phi^+$.
Since $y \in D_0$, this forces $-r_y x = y$, contradicting $x \neq y$.

\noindent\rlap{\rm{(ii)}}\qquad Since $x \dom y$, it follows from Lemma
\ref{lem:basicdom} (v)  that $(x,y)
  \geq 1$. Then $(r_y x, y) = (x,-y) \leq -1$ and hence there is no
  dominance between $r_y x$ and $y$. Also $(r_y x, x) = (x,x) - 2(x,y)^2 \leq
-1$ and thus there is no dominance between $x$ and $r_y x$ either.
\end{proof}

Suppose that $x, y\in \Phi$ with $x\dom y$. It is worthwhile investigating the
connection between this dominance and the canonical generators of the root
subsystem $\Phi(\langle \{r_x, r_y\}\rangle)$.  

\begin{proposition}
  \label{pp:nondom}
  Suppose that $x, y \in \Phi$ are distinct with $x \dom y$. 
Let $a, b$ be the canonical roots for the root subsystem $\Phi(\langle \{r_x,
r_y\}\rangle)$. Then there exists $w \in \langle\{ r_x, r_y \}\rangle$ such that
either 
  \begin{equation*}
 \left\{
\begin{array}{rl}
wx &= a\\
wy &= -b
\end{array} \right.\qquad \text{or else}\qquad
\left\{
\begin{array}{rl}
wx &= b\\
wy &= -a
\end{array} \right..
\end{equation*}
In particular, $(a,b ) = -(x,y)$. 

\end{proposition}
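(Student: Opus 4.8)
The plan is to reduce everything to the rank-two subsystem $\Phi(W')$, where $W' = \langle\{r_x, r_y\}\rangle$, and to exploit the explicit description of infinite dihedral root systems from Section~\ref{sec:intro}. First I would observe that since $x \dom y$, Lemma~\ref{lem:basicdom}~(v) gives $(x, y) \geq 1$; in particular $x \neq -y$ (else $(x,y) = -1$) and $x \neq y$ by hypothesis, so $r_x \neq r_y$ and $W'$ is a genuine dihedral reflection subgroup. Moreover $W'$ cannot be finite: in a finite dihedral root system any two distinct roots subtend a positive angle and hence have inner product strictly less than $1$, contradicting $(x,y) \geq 1$. Thus $W'$ is infinite dihedral, its canonical roots $a, b$ satisfy $(a, b) \leq -1$ (the infinite-order branch of Theorem~\ref{th:croots}~(ii)), and $\Phi(W')$ is described by (\ref{eq:rootsub}). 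Note also that $x, y \in \Phi(W')$ since $r_x, r_y \in W'$.

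The heart of the argument is to move the unordered pair $\{x, y\}$ onto $\{a, b\}$, up to sign, by an element of $W'$. Since $W' = \langle\{r_x, r_y\}\rangle$ is by construction generated by $r_x$ and $r_y$, while $S(W') = \{r_a, r_b\}$, I would use the elementary structure of the infinite dihedral group: its reflections can be indexed by $\Z$ so that the product of two of them is a translation whose magnitude is the difference of the indices, whence two reflections generate $W'$ precisely when their indices are consecutive, and every consecutive pair is $W'$-conjugate to the canonical pair $\{r_a, r_b\}$. Applying this to $r_x, r_y$ yields $w \in W'$ with $\{r_{wx}, r_{wy}\} = \{w r_x w^{-1}, w r_y w^{-1}\} = \{r_a, r_b\}$. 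By the canonical bijection between $T$ and $\Phi^+$ this forces $wx \in \{\pm a\}$ and $wy \in \{\pm b\}$, or else $wx \in \{\pm b\}$ and $wy \in \{\pm a\}$. Alternatively, one can reach the same conclusion entirely within the root picture by writing $x$ and $y$ in the form (\ref{eq:rootsub}) and invoking the canonical-root computations (\ref{eq:canrt1})--(\ref{eq:canrt2}): the generation hypothesis pins down the indices so that the canonical roots of $\langle\{r_x, r_y\}\rangle$ are exactly $a$ and $b$.

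It remains to fix the signs, and here dominance does the work. Since dominance is $W$-invariant (Lemma~\ref{lem:basicdom}~(ii)), $wx \dom wy$, and $(wx, wy) = (x, y) \geq 1$. Writing $wx = \varepsilon a$ and $wy = \delta b$ (the swapped case is identical), we get $\varepsilon\delta\,(a, b) \geq 1$; as $(a,b) \leq -1$ this forces $\varepsilon\delta = -1$, so the pair is $\{a, -b\}$ or $\{-a, b\}$. Finally, a negative root can never dominate a positive root (taking $w = 1$ in Definition~\ref{def:dn}~(i) shows the defining inclusion fails), so the relation $wx \dom wy$ excludes $wx = -a,\, wy = b$ as well as the swapped bad sign $wx = -b,\, wy = a$. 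Hence exactly $wx = a,\, wy = -b$ or $wx = b,\, wy = -a$ survives, which is the assertion. The closing identity is then immediate: $(a, b) = (wx, -wy) = -(wx, wy) = -(x, y)$.

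The main obstacle I anticipate is the middle step, namely showing that the generating reflections $r_x, r_y$ form a pair $W'$-conjugate to $\{r_a, r_b\}$. The abstract dihedral argument is clean but needs the identification of the reflections of $W'$ with $\Z$ and the consecutivity criterion to be set up carefully; the alternative route through (\ref{eq:canrt1})--(\ref{eq:canrt2}) trades this for a somewhat tedious bookkeeping of indices and signs among the four shapes $c_i a + c_{i \pm 1} b$. Everything else, namely the reduction to the infinite dihedral case and the sign determination, is routine given the results already established.
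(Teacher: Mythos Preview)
Your argument is correct, and its overall shape---reduce to an infinite dihedral subsystem, then transport $x,y$ onto $\pm a,\pm b$ and fix signs via dominance---matches the paper's. The execution of the middle step differs, however. The paper does not conjugate the \emph{pair} $\{r_x,r_y\}$ to $\{r_a,r_b\}$. Instead it first picks $w\in W'$ with $wx\in\{a,b\}$ (using the two $W'$-orbits on $\Phi(W')$), then invokes the explicit dominance chains (\ref{eq:seq1})--(\ref{eq:seq2}) to see that every root dominated by $a$ or $b$ is negative, forcing $wy\in\Phi^-$; finally it computes $(wx,-wy)\le -1$ and applies Theorem~\ref{th:croots}~(ii) to recognise $\{wx,-wy\}$ as the canonical pair, whence $-wy$ is the other canonical root. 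Your route replaces this with the group-theoretic fact that in $D_\infty$ any generating pair of reflections is conjugate to the Coxeter pair, which is clean but, as you note, requires setting up the $\Z$-indexing and consecutivity criterion externally; the paper's approach stays entirely within the tools already developed (the chains and Theorem~\ref{th:croots}~(ii)) and avoids that import. Your sign-fixing via $\varepsilon\delta(a,b)\ge1$ is arguably crisper than the paper's second appeal to Theorem~\ref{th:croots}~(ii).
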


\begin{proof}
By Theorem \ref{th:croots} (ii) we know that 
$$(a, b)\in(-\infty, -1]\cup\{\,-\cos(\pi/ n)\mid \text{$n\in \N$ and
$n\geq2$}\,\}.$$
Suppose for a contradiction that $(a, b)=-\cos(\pi/n)$ for some integer $n\geq
2$. Write $\theta = \pi/n$, and Proposition~\ref{pp:anu1}~(i) yields that 
$$
 \Phi(\langle \{r_a, r_b\}\rangle)=\{\,\frac{\sin(m+1)\theta}{\sin\theta}a+
\frac{\sin m\theta}{\sin\theta}b\mid m\in \N  \text{ and } 0\leq m<2n\,\}.
$$
Hence there are distinct integers $m_1$ and $m_2$ (less than $2n$) such that 

$$
x= \frac{\sin(m_1+1)\theta}{\sin\theta}a+ \frac{\sin
m_1\theta}{\sin\theta}b\quad\text{and}\quad
y=\frac{\sin(m_2+1)\theta}{\sin\theta}a+ \frac{\sin m_2\theta}{\sin\theta}b.
$$
But then $(x, y)=\cos((m_1-m_2)\pi/n)< 1$, contradicting
Lemma~\ref{lem:basicdom}~(v). Thus $(a, b)\leq -1$ and so
Lemma~\ref{lem:basicdom}~(v) yields that $a\dom -b$ and $b\dom -a$. It then
follows readily that there are 
two dominance chains in the root subsystem $\Phi(\langle \{r_a, r_b\}\rangle)$,
namely:
\begin{multline}
\label{eq:seq1}
\cdots \dom r_a r_b r_a (b) \dom r_a r_b (a) \dom r_a (b) \dom a \\
 \dom -b \dom r_b(-a) \dom r_b r_a (-b) \dom  \ \cdots 
\end{multline}
 and 
\begin{multline}
\label{eq:seq2}
\cdots \dom r_b r_a r_b (a) \dom r_b r_a (b) \dom r_b (a) \dom b \\
\dom -a \dom r_a (-b) \dom r_a r_b (-a) \dom \cdots.
\end{multline}
Observe that each element of $\Phi(\langle \{r_a, r_b\}\rangle)$ lies in exactly
one of
the above chains, and the negative of any element of one of these chains lies in
the other. Thus $x',\,y'\in \Phi(\langle \{r_a, r_b\}\rangle)$ are in the same
chain
if and only if $(x',y')\ge 1$ and in different chains if and only if $(x',y')\le
-1$.

From (\ref{eq:seq1}) we see that the roots dominated by $a$ are all negative,
and
from (\ref{eq:seq2}) we see that the roots dominated by $b$ are all negative.
Clearly
we may choose $w\in\langle \{r_a, r_b\}\rangle$ such
that either $wx=a$ or $wx=b$, and since $wx\dom wy$, it follows that either 
\begin{align}
\label{eq:ch1}
 wx &= a \quad\text{and}\quad  wy \in \Phi(\langle \{r_a, r_b\} \rangle)\cap
\Phi^-\\ 
\noalign{\hbox{or}}
\label{eq:ch2}
 wx &= b \quad\text{and}\quad wy \in \Phi(\langle \{r_a, r_b\} \rangle)\cap
\Phi^-.
\end{align}
Suppose that $w x= a$. Then $(a, -wy)=( wx, -wy) =-(x,y) \leq -1$. Now since
$-wy \in \Phi(\langle \{r_x, r_y\} \rangle)\cap \Phi^+$ and $\langle\{ r_a,
r_{wy}\}\rangle=\langle \{ r_x, r_y\}\rangle$, it follows from Theorem
~\ref{th:croots}~(ii) that $\{ a, -wy\} $ is the set of canonical roots for
$\Phi(\langle \{r_x, r_y\} \rangle)$, which then forces that $-wy =b$. 
Similarly, in the case $wx=b$, we may conclude that $wy =-a$.

\end{proof}

\begin{lemma}
\label{lem:consec}
Suppose that $x$, $y \in \Phi$ are distinct with $x\dom y$. Let $a$ and $b$ be
the canonical roots for $\Phi(\langle \{r_x, r_y\} \rangle)$. Then either 
\begin{equation*}
 \left\{
\begin{array}{rl}
x &= c_m a+c_{m+1}b \\
y &= c_{m-1}a+c_m b
\end{array} \right.\quad \text{or}\quad
\left\{
\begin{array}{rl}
x &= c_m a + c_{m-1}b\\
y &= c_{m-1}a +c_{m-2}b
\end{array} \right.,
\end{equation*}
for some integer $m$, where $c_i$ is as
defined in  (\ref{eq:c}) for each integer $i$.
\end{lemma}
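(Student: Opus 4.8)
The plan is to pass to the dihedral root system $\Phi(\langle\{r_x,r_y\}\rangle)$, whose canonical roots are $a,b$ by hypothesis, and to exploit the completely explicit description of its elements furnished by (\ref{eq:rank2}). Since $a,b$ are the canonical roots, Theorem~\ref{th:croots}~(ii) gives $\langle\{r_a,r_b\}\rangle=\langle\{r_x,r_y\}\rangle$, so $x$ and $y$ both lie in $\Phi(\langle\{r_a,r_b\}\rangle)$. First I would invoke Proposition~\ref{pp:nondom}: because $x\dom y$ are distinct, it tells us that $(a,b)\le -1$ and produces $w\in\langle\{r_a,r_b\}\rangle$ with either $wx=a,\ wy=-b$ or $wx=b,\ wy=-a$. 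I would treat the first alternative in detail, the second being entirely symmetric under interchanging the roles of $a$ and $b$. Writing $x=w^{-1}a$ and $y=-w^{-1}b$, the task reduces to computing the images of the two canonical roots under an arbitrary element $w^{-1}$ of the infinite dihedral group $\langle\{r_a,r_b\}\rangle$.

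Next I would use the standard normal form for this group: every element is either a rotation $(r_ar_b)^k$ or a reflection $(r_ar_b)^kr_a$ for some $k\in\Z$ (note $r_b=(r_ar_b)^{-1}r_a$ is already of the second type). For each family I would evaluate $w^{-1}a$ and $w^{-1}b$ by applying (\ref{eq:rank2}), together with the two elementary identities $c_{-i}=-c_i$ and $c_2c_i=c_{i+1}+c_{i-1}$, both immediate from (\ref{eq:c}). A short calculation should then show that the rotation $(r_ar_b)^k$ yields $x=c_{2k+1}a+c_{2k}b$ and $y=c_{2k}a+c_{2k-1}b$, which upon setting $m=2k+1$ is exactly the second displayed alternative; whereas the reflection $(r_ar_b)^kr_a$ gives $x=c_{-2k-1}a+c_{-2k}b$ and $y=c_{-2k-2}a+c_{-2k-1}b$, which upon setting $m=-2k-1$ is the first. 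Since rotations and reflections exhaust $\langle\{r_a,r_b\}\rangle$, every possibility for $w^{-1}$ lands in one of the two stated cases. Conceptually this is nothing more than the observation that $a$ and $-b$ are dominance-consecutive in the chain (\ref{eq:seq1}) and that the dihedral group carries consecutive pairs to consecutive pairs; the computation above makes this precise.

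I expect the only genuine friction to be index bookkeeping: keeping the shifts consistent when $w^{-1}$ is a reflection, where the sign flip $c_{-i}=-c_i$ is what converts a root with negative leading coefficient back into the normal form $c_ma+c_{m\pm1}b$, and checking that the symmetric alternative $wx=b,\ wy=-a$ reproduces the very same two cases rather than creating new ones. Finally I would remark that, since (\ref{eq:rank2}) and the two identities for the $c_i$ hold uniformly in $\theta$ (including $\theta=0$, where $c_i=i$), no separate treatment of the degenerate case is needed.
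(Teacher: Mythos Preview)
Your argument is correct, but it proceeds along a different line from the paper's. The paper uses Proposition~\ref{pp:nondom} only to conclude $(a,b)\le -1$; it then writes $x=c_ma+c_{m\pm1}b$ and $y=c_na+c_{n\pm1}b$ via (\ref{eq:rootsub}), eliminates the mixed-sign pairings by computing $(x,y)\le-1$, and finally invokes the canonical-root formulas (\ref{eq:canrt1})--(\ref{eq:canrt2}) for the subsystem generated by $r_x,r_y$ to force $k_1+k_2=1/(m-n)\in\Z$, hence $|m-n|=1$. You instead exploit the full strength of Proposition~\ref{pp:nondom}, namely the element $w$ with $\{wx,wy\}=\{a,-b\}$ or $\{b,-a\}$, and then run $w^{-1}$ through the normal form of the infinite dihedral group, reading off the answer directly from (\ref{eq:rank2}) and the identity $c_{-i}=-c_i$. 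Your route bypasses the somewhat technical machinery of (\ref{eq:canrt1})--(\ref{eq:canrt2}) and makes transparent the conceptual point you mention: $a$ and $-b$ are adjacent in the dominance chain (\ref{eq:seq1}), and the dihedral action preserves adjacency. The paper's route, on the other hand, illustrates how the canonical-root description of dihedral subsubsystems can be used as a black box, which is consistent with the emphasis placed on those formulas at the end of Section~\ref{sec:intro}. Your bookkeeping checks out in both the rotation and reflection cases, and the symmetric alternative $wx=b,\ wy=-a$ indeed lands in the same two displayed forms (with $m$ of the opposite parity), so nothing is missing.
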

\begin{proof}
Proposition \ref{pp:nondom} yields that $(a, b)\leq -1$. 
Since $a, b$ are the canonical roots of $\Phi(\langle \{r_x, r_y\}\rangle)$, it
follows from equation~(\ref{eq:rootsub}) that  
$x=c_ma +c_{m\pm 1}b$ and
$y=c_n a+c_{n \pm 1} b$,
for some integers $m$ and $n$. Let $\theta =\cosh^{-1}(-(a, b))$. If either 
\begin{equation*}
 \left\{
\begin{array}{rl}
x &= c_ma + c_{m+1}b\\
y &= c_na + c_{n-1}b
\end{array} \right.\quad \text{or}\quad
\left\{
\begin{array}{rl}
x &= c_ma + c_{m-1}b\\
y &= c_na + c_{n+1}b
\end{array} \right.,
\end{equation*}
then either $(x,y)=-\cosh((n+m)\theta)\leq -1$ (when $\theta \neq 0$), 
or else $(x, y)=-1$ (when $\theta =0$), contradicting $x\dom y$. Therefore
there are only two possibilities, namely:
\begin{equation}
\label{eq:3.1}
 \left\{
\begin{array}{rl}
x &= c_ma + c_{m+1}b\\
y &= c_na + c_{n+1}b
\end{array} \right.
\end{equation}
or 
\begin{equation}
\label{eq:3.2}
 \left\{
\begin{array}{rl}
x &= c_{m}a + c_{m-1}b\\
y &= c_{n}a + c_{n-1}b.
\end{array} \right.
\end{equation}

First suppose that (\ref{eq:3.1}) is the case. Since $a$ and $b$ are the
canonical roots for $\Phi(\langle \{r_a, r_b\} \rangle )=\Phi(\langle \{r_x,
r_y\} \rangle)$, it follows from equation~(\ref{eq:canrt1}) that there are
integers $k_1$ and $k_2$ such that 
$$1=k_1(m-n)-m \qquad\text{and}\qquad 0=k_2(m-n)+m.$$
But then $k_1+ k_2 = \frac{1}{m-n} \in \Z$. Clearly this is only possible when
$m-n =\pm 1$. On the other hand, since $x \dom y$, it is readily seen that
$m>n$, giving us 
$ x= c_m a+c_{m+1}b$ and $y=c_{m-1}a+c_m b$.
On the other hand, if (\ref{eq:3.2}) is the case, then by taking
equation~(\ref{eq:canrt2}) into consideration, a similar reasoning as above
yields that 
$x=c_m a + c_{m-1}b$ and $y=c_{m-1}a +c_{m-2}b$.
\end{proof}

\begin{remark}
 Let $x$ and $y$ be as in Proposition~\ref{pp:nondom} and Lemma \ref{lem:consec}
above. Then in fact $x$ and $y$ are consecutive terms in precisely one of the
dominance chains (\ref{eq:seq1}) or (\ref{eq:seq2}).
\end{remark}

Now we are ready for the first key result of this paper:
\begin{theorem}
  \label{th:d1}
  $D_1 \subseteq \{\,r_a b \mid a,b \in D_0\,\}$.
  Furthermore, if $\#R<\infty$ then $\#D_1 \leq (\#D_0)^2-\#D_0$.
\end{theorem}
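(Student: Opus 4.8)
The plan is to prove that every $x\in D_1$ has the form $r_ab$ with $a,b\in D_0$ distinct; since $r_aa=-a\notin\Phi^+$, this simultaneously gives the containment and cuts the count down to ordered pairs of \emph{distinct} elementary roots. First I would pin down the candidate pair. Let $x\in D_1$ and let $y$ be the unique positive root with $y\neq x$ and $x\dom y$. Because dominance is a partial order, if $y$ dominated some positive $y'\neq y$, then $x\dom y'$ by transitivity and $y'\neq x$ by antisymmetry, so $y'\in D(x)=\{y\}$, a contradiction; hence $y\in D_0$. Now set $p=r_yx$. Lemma~\ref{lem:reflx}(i) gives $p\in\Phi^+$, and Lemma~\ref{lem:reflx}(ii) gives $(p,y)\le-1$, so $p\neq y$ and $x=r_yp=p-2(p,y)y$ with $-2(p,y)\ge2$. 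Thus everything reduces to showing $p\in D_0$, for then $x=r_yp$ exhibits $x$ as $r_ab$ with $a=y$, $b=p\in D_0$ and $a\neq b$.

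The heart of the matter, and the step I expect to be the main obstacle, is $p\in D_0$. Suppose instead $D(p)\neq\emptyset$ and choose $z\in D(p)$ of minimal depth. If $z$ dominated a positive $z'\neq z$, then $\dep(z')<\dep(z)$ by Lemma~\ref{lem:basicdom}(i) and $z'\in D(p)$ by transitivity and antisymmetry, contradicting minimality; hence $z\in D_0$. Since $p\dom z$ we have $(p,z)\ge1$, so $z\neq y$ (as $(p,y)\le-1$). Applying $r_y$ to $p\dom z$ yields $x\dom r_yz$; if $r_yz$ were positive it would be a positive root dominated by $x$, hence lie in $\{x,y\}$, which is impossible ($r_yz=x$ forces $z=p$, and $r_yz=y$ forces $z=-y\notin\Phi^+$). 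Therefore $r_yz\in\Phi^-$, and since $z\in\Phi^+$ this forces $(y,z)>0$, for otherwise $r_yz=z-2(y,z)y$ would be a nonnegative combination of the positive roots $z$ and $y$ and hence positive.

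The finish is then a one-line estimate. Using $x=p-2(p,y)y$,
\[
(x,z)=(p,z)-2(p,y)(y,z)\ge 1+2(y,z)>1,
\]
so by Lemma~\ref{lem:basicdom}(v) there is dominance between $x$ and $z$. But $x\dom z$ would place $z$ in $D(x)=\{y\}$, forcing $z=y$, whereas $z\dom x$ would place $x$ in $D(z)=\emptyset$; both are absurd. Hence $D(p)=\emptyset$, i.e.\ $p\in D_0$, which establishes $D_1\subseteq\{\,r_ab\mid a,b\in D_0,\ a\neq b\,\}$. I emphasize that the naive shortcut ``$r_yz\in\Phi^-$ with $z\in D_0$ forces $y\dom z$'' is false in general, since inside a \emph{finite} dihedral reflection subgroup a reflection routinely sends a positive root to a negative one with no dominance present; the global hypothesis $x\in D_1$ has to be injected through the inner-product estimate above to manufacture the contradiction.

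For the bound, when $\#R<\infty$ the Brink--Howlett finiteness theorem gives $\#D_0<\infty$. The set $\{\,r_ab\mid a,b\in D_0,\ a\neq b\,\}$ then has at most $(\#D_0)^2-\#D_0$ elements, namely the number of ordered pairs of distinct elementary roots, and the containment just proved yields $\#D_1\le(\#D_0)^2-\#D_0$.
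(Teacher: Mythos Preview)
Your proof is correct and follows essentially the same line as the paper's: pick $z$ dominated by $r_yx$, argue $r_yz\in\Phi^-$ so $(y,z)>0$, then use the inner-product identity to get $(x,z)\ge 1$ and derive a contradiction from the resulting dominance between $x$ and $z$. The only cosmetic difference is that you select $z$ of minimal depth so that $z\in D_0$, which lets you finish the case $z\dom x$ via $D(z)=\emptyset$, whereas the paper takes an arbitrary $z\in D(r_yx)$ and disposes of $z\dom x$ by invoking Lemma~\ref{lem:reflx}(ii) (since $r_yx\dom z\dom x$ would give $r_yx\dom x$).
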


\begin{proof}
  Suppose that $x \in D_1$ and let $D(x)=\{y\}$. Clearly $y \in D_0$. By
  Lemma~\ref{lem:reflx}~(i), we know that $r_y x \in \Phi^+$. Thus to
  prove Theorem~\ref{th:d1}, it suffices to show that $r_y x \in D_0$.

 Suppose for a contradiction that $r_y x \in \Phi^+ \setminus D_0$. Then
 there exists $z \in \Phi^+\setminus\{r_y x\}$ with $r_y x \dom z$.
 Since dominance is $W$-invariant, it follows that $x \dom r_y z$.
 If $r_y z=y$ then $z\in \Phi^-$, contradicting our choice for $z$. Then the
fact $D(x)=\{y\}$ implies that $r_y z\in \Phi^-$ and in particular, $(z, y)>0$.
Since $r_y x\dom z$ and $x\dom y$, it follows from Lemma ~\ref{lem:basicdom}~(i)
that $(r_y x, z)\geq 1$ and $(x, y)\geq 1$. Then
  \begin{displaymath}
    \begin{split}
      1 \leq (r_y x, z)&= (x-2(x,y)y, z)\\
      &= (x,z) - 2(x,y)(y,z),
    \end{split}
  \end{displaymath}
  implying that $1 \leq (x,z)$. Hence Lemma~\ref{lem:basicdom}~(v) yields that
either $x \dom z$ or else $z \dom x$. In the latter case $r_y x \dom z \dom x$,
contradicting Lemma~\ref{lem:reflx}~(ii). On the other hand, if $x\dom z$, then
our construction forces $z=y$. But then $r_y x\dom y$, again contradicting
Lemma~\ref{lem:reflx}~(ii). Thus $r_y x \in D_0$, as required. Since $x \in D_1$
was arbitrary, it follows that $D_1 \subseteq \{\,r_a b \mid a,b \in D_0\,\}$.

  Finally, since $D_1$ does not contain elements of the form $r_a a$,
  $a \in D_0$, it follows that
  \begin{equation}
    \label{eq:d1}
    D_1 \subseteq \{\,r_a b \mid a,b \in D_0\,\} \setminus {-D_0}.
  \end{equation}
  In the case that $\#R < \infty$, Theorem 2.8 of \cite{BH93} yields that $\#D_0
< \infty$, and so it follows from (\ref{eq:d1}) that $\#D_1 \leq
(\#D_0)^2-\#D_0$.
\end{proof}

The above treatment of $D_1$ can be generalized to $D_n$ for arbitrary
$n \in \N$. Indeed we have:

\begin{theorem}
  \label{th:dnfinite}
%Theorem 6
  For $n \in \N$, 
$$D_n \subseteq \{\,r_a b \mid a\in D_0, b\in \!\!\biguplus\limits_{m\leq n-1}
D_m\,\}.$$ %And in the case $|D_n| < \infty$ if $|S| < \infty$.
\end{theorem}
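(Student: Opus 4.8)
The plan is to generalize the proof of Theorem~\ref{th:d1} almost verbatim, replacing the single dominated root by a carefully chosen \emph{minimal} one. Fix $n \geq 1$ and let $x \in D_n$, so that $D(x)$ is a finite set of cardinality $n$. Since dominance is a partial order on $\Phi^+$, the finite poset $D(x)$ possesses a minimal element $a$ (minimal in the sense that $a$ dominates no member of $D(x)$ other than itself). First I would verify that such an $a$ in fact lies in $D_0$: if $a$ dominated some $z \in \Phi^+$ with $z \neq a$, then transitivity of dominance together with $x \dom a$ would force $x \dom z$, whence $z \in D(x)$ (and $z \neq x$, for otherwise $x \dom a \dom x$ gives $a = x$); but then $z$ would sit strictly below $a$ in $D(x)$, contradicting minimality. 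Thus $a \in D_0$ and $x \dom a$, so Lemma~\ref{lem:reflx}~(i) yields $r_a x \in \Phi^+$. Because $x = r_a(r_a x)$, it then suffices to show $r_a x \in \biguplus_{m \le n-1} D_m$, i.e.\ that $\#D(r_a x) \le n-1$.

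The core of the argument is to construct an injection $\psi \colon D(r_a x) \to D(x) \setminus \{a\}$; since $\#(D(x) \setminus \{a\}) = n-1$, this delivers the bound. I would define $\psi$ piecewise: for $z \in D(r_a x)$ put $\psi(z) = r_a z$ if $r_a z \in \Phi^+$, and $\psi(z) = z$ otherwise. The task is then to check that $\psi$ is well defined and injective. On the first branch, $W$-invariance of dominance (Lemma~\ref{lem:basicdom}~(ii)) converts $r_a x \dom z$ into $x \dom r_a z$; as $r_a z \in \Phi^+$ with $r_a z \neq x$ (since $z \neq r_a x$) and $r_a z \neq a$ (since $z \neq -a$), this places $r_a z \in D(x) \setminus \{a\}$, exactly mirroring the proof of Theorem~\ref{th:d1}.

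The main obstacle is the second branch, where $r_a z \in \Phi^-$; this is the genuinely new possibility that cannot arise for $D_1$. Here $z \in \Phi^+$ forces $(z,a) > 0$, and expanding $(r_a x, z) = (x,z) - 2(x,a)(a,z) \ge 1$ together with $(x,a) \ge 1$ (from $x \dom a$ via Lemma~\ref{lem:basicdom}~(i)) yields $(x,z) > 1$. By Lemma~\ref{lem:basicdom}~(v) there is dominance between $x$ and $z$; the alternative $z \dom x$ is excluded, as it would give $r_a x \dom z \dom x$ against $(r_a x, x) \le -1$ from Lemma~\ref{lem:reflx}~(ii). Hence $x \dom z$, and $(x,z) > 1$ rules out $z = x$, so $z \in D(x)$; the companion inequality $(r_a x, a) \le -1$ of Lemma~\ref{lem:reflx}~(ii), which prevents $r_a x$ from dominating $a$, gives $z \neq a$, so $\psi(z) = z \in D(x) \setminus \{a\}$ on this branch too. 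Injectivity is immediate inside each branch since $r_a$ is a bijection, while a cross-branch collision with $z_1$ in the first branch and $z_2$ in the second would read $r_a z_1 = z_2$, hence $z_1 = r_a z_2 \in \Phi^-$ (as $z_2$ lies in the second branch), contradicting $z_1 \in \Phi^+$. This establishes $\#D(r_a x) \le n-1$ and completes the proof of the desired inclusion.
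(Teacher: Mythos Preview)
Your proof is correct and follows essentially the same route as the paper's: pick a dominance-minimal element $a$ of $D(x)$ (hence in $D_0$), apply $r_a$, and bound $\#D(r_a x)$ by $n-1$ via the same two-case analysis depending on whether $r_a z$ is positive or negative. Your packaging of the counting step as an explicit injection $\psi\colon D(r_a x)\to D(x)\setminus\{a\}$ is a slightly cleaner formulation of what the paper phrases as ``this is clearly a disjoint union of size $n-1$'', but the underlying argument and the invocations of Lemma~\ref{lem:basicdom} and Lemma~\ref{lem:reflx} are identical.
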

\begin{proof}
The case $n=1$ has been covered by Theorem \ref{th:d1}, so we may assume that
$n>1$. 

Let $x\in D_n$, and suppose that $D(x)=\{\,y_1, y_2, \ldots, y_n\,\}$, with
$y_n$ being minimal with respect to dominance. Clearly $y_n \in D_0$ and so
Lemma~\ref{lem:reflx}~(i) yields that $r_{y_n} x \in \Phi^+$. Hence either
$r_{y_n} x \in D_0$ or else $r_{y_n} x \in \Phi^+ \setminus D_0$.

If $r_{y_n}x\in D_0$, then 
$$x\in\{\, r_a b \mid \text{$a,b\in  D_0$}\,\}\subseteq \{\,r_a b \mid a\in D_0,
b\in \biguplus_{m\leq n-1} D_m\,\},$$ 
and the desired result clearly follows, given the arbitrary choice of $x$.

If $r_{y_n} x \in \Phi^+ \setminus D_0$, let $z\in D(r_{y_n}x)$. We claim that
there are at most $(n-1)$ possible values for $z$. Observe that this claim
implies the following:  
$$
r_{y_n} x \in \biguplus_{m \leq n-1} D_m,
$$
then it follows immediately that $D_n\subseteq \{\,r_a b \mid a\in D_0, b\in
\biguplus_{m\leq n-1} D_m\,\}$, since $x\in D_n$ was arbitrary.

Thus all it remains to do is to prove the above claim.
Since $r_{y_n}x \, \dom \, z$, Lemma \ref{lem:basicdom} (ii) yields that $x\dom
r_{y_n} z$. Thus either $r_{y_n}z \in\Phi^+$ and in which case $ r_{y_n} z =
y_i$, for $ 1\leq i \leq n-1$; or else $r_{y_n} z \in \Phi^-.$  If $r_{y_n}z \in
\Phi^-$ then clearly $(y_n, z)>0$. Since $r_{y_n}x\dom z$ and $x\dom y_{n}$,
Lemma~\ref{lem:basicdom}~(v) yields that $(r_{y_n}x, z)\geq 1$ and $(x,
y_n)\geq1$. Then
  \begin{displaymath}
    \begin{split}
      1 \leq (r_{y_n} x, z)&= (x-2(x,y_n)y_n, z)\\
      &= (x,z) - 2(x,y_n)(y_n,z),
    \end{split}
  \end{displaymath}
and hence it follows that $(x,z)\geq 1$. Similar to the proof of
Theorem~\ref{th:d1}, we can conclude that  $x \dom z$ and so $z \in \{ y_1,
\cdots,  y_n\}$. Since $x\dom z$ as well as $r_{y_n}x\dom z$, 
Lemma~\ref{lem:reflx}~(ii) yields that $z \in \{y_1, \cdots, y_{n-1}\}$. Summing
up, if $z\in D(r_{y_n} x)$, then
\begin{displaymath}
  \begin{split}
    z\in \, &\{\, r_{y_n} (y_i) \mid r_{y_n}(y_i) \in \Phi^+,\, i\in \{\, 1,
\ldots, n-1\,\}\,\} \\
    \cup \, &\{\,y_i \mid r_{y_n}(y_i) \in \Phi^-,\, i\in \{\,1, \ldots, n-1
\,\}\,\},
  \end{split}
\end{displaymath}
and this is clearly a disjoint union of size $n-1$. Thus $r_{y_n} x \in D_m$,
for some $m\leq n-1$ and the claim is proved.

\end{proof}

Note that for each positive integer $n$, Theorem \ref{th:dnfinite} immediately
yields the following upper bound for the size of the corresponding $D_n$.
\begin{corollary}
 \label{cor:cor7}
Suppose that $\#R <\infty$. Then $\#D_n<\infty$ for all $n\in \N$. Indeed 
$$\#D_n\leq (\#D_0)^{n+1}-(\#D_0)^n.$$
\end{corollary}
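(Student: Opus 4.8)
The plan is to prove the bound by induction, but to induct on the cumulative unions $U_k := \biguplus_{m\le k} D_m$ rather than on the individual $D_n$, feeding the structural containment of Theorem~\ref{th:dnfinite} into the induction. The finiteness assertion in the case $n=0$ is exactly the Brink--Howlett theorem (Theorem~2.8 of \cite{BH93}), which also supplies the finite quantity $d:=\#D_0$ on which every estimate rests; finiteness of $D_n$ for $n\ge 1$ is then a byproduct of the numerical bound. So I would fix $n\ge 1$, write $u_k=\#U_k$, and aim to show first that $u_n\le d^{n+1}$ and then read off $\#D_n\le d^{n+1}-d^n$.

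The heart of the matter is a per-level counting inequality sharper than what Theorem~\ref{th:dnfinite} gives on its face. That theorem, via the assignment $x\mapsto(y_n,\,r_{y_n}x)$ used in its proof (with $y_n$ a dominance-minimal, hence elementary, member of $D(x)$), embeds $D_n$ injectively into $D_0\times U_{n-1}$, injectivity being clear since $x=r_{y_n}(r_{y_n}x)$ recovers $x$ from its image pair. I would next observe that the image lands only among pairs $(a,b)$ with $(a,b)\le -1$: indeed $a=y_n\in D(x)$ forces $x\dom a$, so $(x,a)\ge 1$ by Lemma~\ref{lem:basicdom}~(i), while expanding $x=r_a b$ gives $(x,a)=-(b,a)$, whence $(a,b)\le -1$. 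Finally, for each fixed second coordinate $b\in U_{n-1}$ I would exhibit at least one $a\in D_0$ violating $(a,b)\le -1$, namely $a=b$ when $b\in D_0$ (so $(a,b)=1$), and a dominance-minimal element of $D(b)$ when $b\notin D_0$ (for then $b\dom a$ gives $(a,b)\ge 1$). Hence at most $d-1$ admissible first coordinates arise for each $b$, yielding $\#D_n\le(d-1)\,u_{n-1}$. Telescoping, $u_n=u_{n-1}+\#D_n\le d\,u_{n-1}$, so with $u_0=d$ induction gives $u_n\le d^{n+1}$, and then $\#D_n\le(d-1)u_{n-1}\le(d-1)d^n=d^{n+1}-d^n$. (For $n=1$ this recovers the bound of Theorem~\ref{th:d1}.)

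The main obstacle is getting the constant exactly right. The naive cardinality estimate $\#D_n\le\#D_0\cdot\#U_{n-1}$ that Theorem~\ref{th:dnfinite} hands over is off by just enough to spoil the induction: it yields only $\#D_n\le d^{n+1}$, whereupon the cumulative $u_n$ overruns $d^{n+1}$ and the induction on $u_n$ fails to close. Even discarding the ``diagonal'' pairs $a=b$ (reflecting $r_a a=-a\notin\Phi^+$) is insufficient, saving a term of size $d$ rather than the required $d^n$. The genuine content is therefore the observation that admissible image pairs satisfy $(a,b)\le -1$, together with the uniform production, for every $b\in U_{n-1}$, of an excluded first coordinate; this upgrades the factor $\#D_0$ to $\#D_0-1$ and makes the telescoping self-sustaining. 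I expect the one point needing real care to be checking that the excluded $a$ genuinely lies in $D_0$ in both cases, which again hinges on the fact that a dominance-minimal dominated root is elementary, as in the proof of Theorem~\ref{th:dnfinite}.
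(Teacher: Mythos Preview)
Your argument is correct and considerably more explicit than the paper's own proof, which is extremely terse: after recording $D_n \subseteq \{r_a b \mid a\in D_0,\, b\in U_{n-1}\} \setminus U_{n-1}$, the paper simply writes ``the desired result then follows from a simple induction on~$n$'' without indicating how the factor $d-1$ (rather than $d$) is obtained. You correctly diagnose that the crude bound $\#D_n \le d\cdot u_{n-1}$ does not close the induction on~$u_n$, and that discarding only the diagonal pairs is likewise insufficient.

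Your mechanism for extracting the sharp recursion differs from what the paper's set-level phrasing literally suggests. Reading the paper's line at face value, one would pass from $D_n\subseteq A\setminus U_{n-1}$ (with $A=\{r_a b\mid a\in D_0,\,b\in U_{n-1}\}$) to $\#D_n\le\#A-u_{n-1}\le(d-1)u_{n-1}$, which tacitly requires $U_{n-1}\subseteq A$; this containment can be checked (for $n\ge 2$ every $b\in U_{n-1}$ admits some $a\in D_0$ with $r_a b\in U_{n-1}$, using Lemma~\ref{lem:lemma10} and the proof of Theorem~\ref{th:dnfinite}), but the paper does not supply it, and it actually fails for $n=1$ where one must fall back on Theorem~\ref{th:d1}. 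You instead count pairs: the injection $x\mapsto(y_n,r_{y_n}x)$ from the proof of Theorem~\ref{th:dnfinite}, together with the constraint $(a,b)\le -1$ on image pairs, lets you exhibit for every $b\in U_{n-1}$ an explicitly forbidden first coordinate in~$D_0$ (namely $b$ itself when $b\in D_0$, or a dominance-minimal element of $D(b)$ otherwise). This yields $\#D_n\le(d-1)u_{n-1}$ cleanly and uniformly for all $n\ge 1$, after which the telescoping $u_n\le d\,u_{n-1}$ and the Brink--Howlett base case finish exactly as you describe.
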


\begin{proof}
Clearly $D_i\cap D_j=\emptyset$ whenever $i\neq j$, so Theorem~\ref{th:dnfinite}
yields that $D_n\subseteq \{r_a b \mid a\in D_0, b\in \biguplus_{m\leq
n-1}D_m\}\setminus(\biguplus_{m<n}D_m)$ and the desired result then follows from
a simple induction on $n$.
\end{proof}

Having shown that for all $n\in \N$, $\#D_n<\infty$ if $\#R < \infty$, it is not
immediately clear, at this stage, that for each $n \in \N$, the corresponding
$D_n \neq \emptyset$. Lemma \ref{lem:lemma9} to Corollary \ref{cor:c19} below
will, amongst other things, establish that $D_n \neq \emptyset$ for each $n\in
\N$ if $W$ is an infinite Coxeter group of finite rank.

\begin{lemma}
 \label{lem:lemma9}
For $n \in \N$, $$\{\,wa \mid \text{$a\in D_0$, $w\in W$, $\ell(w)<n$}\,\} \cap
D_n = \emptyset.$$
\end{lemma}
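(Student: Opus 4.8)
The plan is to fix an arbitrary element of the left-hand set, say $wa$ with $a\in D_0$ and $\ell(w)<n$, and to show directly that $wa\notin D_n$. Since $D_n\subseteq \Phi^+$, there is nothing to prove if $wa\in \Phi^-$, so I would assume $wa\in \Phi^+$ and aim to bound the number of positive roots it dominates, establishing $\#D(wa)\leq \ell(w)<n$, which forces $wa\notin D_n$.

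First I would analyse $D(wa)$ using the $W$-invariance of dominance (Lemma~\ref{lem:basicdom}~(ii)). For $y\in \Phi^+$ with $y\neq wa$, writing $z=w^{-1}y$ we have $wa\dom y$ if and only if $a\dom z$, and $y\neq wa$ if and only if $z\neq a$. Hence the positive roots dominated by $wa$ are exactly those of the form $wz$ where $z\neq a$, $a\dom z$, and $wz\in \Phi^+$. Now the hypothesis $a\in D_0$ enters: since $a$ dominates no positive root other than itself, any such $z$ (being distinct from $a$) must be negative. Thus
$$D(wa)=\{\, wz \mid z\in \Phi^-,\ a\dom z,\ wz\in \Phi^+ \,\}.$$

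Finally I would carry out the counting step. The map $z\mapsto wz$ is injective, so $\#D(wa)$ is at most the number of negative roots $z$ with $wz\in \Phi^+$. Setting $u=-z$, these correspond bijectively to the positive roots $u$ with $wu\in \Phi^-$, that is, to the elements of $N(w)$; and $\#N(w)=\ell(w)$ by the standard count recalled in Section~\ref{sec:intro}. Therefore $\#D(wa)\leq \ell(w)<n$, so $wa\notin D_n$, as required. I expect the only real care to be needed in the sign bookkeeping of the preceding two steps --- correctly passing between the roots dominated by $a$ and those dominated by $wa$, and identifying the negative roots that $w$ sends to positive roots with the set $N(w)$ --- while the appeal to the elementarity of $a$ and to the identity $\#N(w)=\ell(w)$ are the two substantive inputs.
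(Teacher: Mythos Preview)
Your proof is correct and follows essentially the same idea as the paper's: use $W$-invariance of dominance to pull the roots in $D(wa)$ back by $w^{-1}$, invoke elementarity of $a$ to force these pullbacks to be negative, and then bound the count by $\ell(w)$ via the identity $\#N(\cdot)=\ell(\cdot)$. The only cosmetic differences are that the paper phrases this as a contradiction (assuming $wa\in D_n$ and exhibiting $n$ distinct elements of $N(w^{-1})$), and that it lands the $y_i$ directly in $N(w^{-1})$ rather than passing through the substitution $u=-z$ to reach $N(w)$.
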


\begin{proof}
Suppose for a contradiction that there exist some $n\in \N$ and $x= wa \in D_n$
such that $a\in D_0$ and $w\in W$ with $\ell(w)<n$.
Suppose that $D(x)=\{\, y_1, \ldots, y_n \,\}$. Since dominance is
$W$-invariant, it follows that $a=w^{-1}x$ dominates all of $w^{-1}y_1,
w^{-1}y_2, \ldots, w^{-1}y_n$.
Note that
$a \notin\{\, w^{-1} y_1, \cdots, w^{-1} y_n\,\}$. Since $a$ is elementary, it
follows that $w^{-1} y_1, \cdots, w^{-1} y_n \in \Phi^-$, that is, $y_1, \cdots,
y_n \in N(w^{-1})$, but this contradicts the fact that
$\#N(w^{-1})=\ell(w^{-1})=\ell(w)< n$.
\end{proof}

\begin{lemma}
\label{lem:lemma10}
$$R D_0 \subseteq -D_0 \uplus D_0 \uplus D_1.$$ 
\end{lemma}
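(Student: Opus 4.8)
The plan is to fix a simple reflection $r = r_a \in R$ and an elementary root $\alpha \in D_0$, and to determine the position of $r\alpha$ by a short case analysis driven entirely by the $W$-invariance of dominance. Recall from the background material that $N(r_a) = \{a\}$; consequently $r\alpha$ is a negative root precisely when $\alpha = a$. In that case $r\alpha = -a$, and since every simple root is elementary (by Lemma~\ref{lem:basicdom}~(i): the minimal depth $1$ is attained only by simple roots, and distinct simple roots have inner product at most $0 < 1$), we have $a \in D_0$ and hence $r\alpha \in -D_0$. Thus the only remaining case is $\alpha \neq a$, where $r\alpha \in \Phi^+$, and here I would show $r\alpha \in D_0 \uplus D_1$ by bounding the number of positive roots it dominates.

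So set $\beta = r\alpha \in \Phi^+$ and suppose $\beta \dom \gamma$ for some $\gamma \in \Phi^+$ with $\gamma \neq \beta$; the goal is to pin $\gamma$ down uniquely. The decisive step is to push this dominance back through $r$: since dominance is $W$-invariant (Lemma~\ref{lem:basicdom}~(ii)) and $r\beta = \alpha$, applying $r$ gives $\alpha \dom r\gamma$. Now I would invoke the elementariness of $\alpha$: as $\alpha$ dominates no positive root other than itself, either $r\gamma = \alpha$ or $r\gamma \in \Phi^-$. The alternative $r\gamma = \alpha$ would give $\gamma = r\alpha = \beta$, contradicting $\gamma \neq \beta$, so necessarily $r\gamma \in \Phi^-$.

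Finally I would use $N(r) = \{a\}$ a second time: a positive root $\gamma$ with $r\gamma \in \Phi^-$ must be $a$ itself. Hence $a$ is the one and only positive root that $\beta$ can dominate apart from itself, so $\#D(\beta) \leq 1$, i.e. $\beta \in D_0 \cup D_1$. Combining the two cases yields $R D_0 \subseteq -D_0 \uplus D_0 \uplus D_1$, the disjointness of the right-hand side being automatic since $-D_0 \subseteq \Phi^-$ while $D_0$ and $D_1$ are disjoint subsets of $\Phi^+$.

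I do not expect a serious obstacle: the whole argument is a direct exploitation of the $W$-invariance of dominance together with the defining property of $D_0$. The only point needing care is the bookkeeping of the case split, and in particular making sure the excluded possibility $r\gamma = \alpha$ is ruled out, since that is exactly what prevents a second dominated root from appearing and is therefore the crux of the uniqueness claim.
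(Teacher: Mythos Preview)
Your proof is correct and follows essentially the same idea as the paper's: push the dominance back through $r$ using $W$-invariance, use elementariness of $\alpha$ to force $r\gamma\in\Phi^-$, and then use $\#N(r)=1$ to conclude $\gamma=a$. The only difference is packaging: the paper first records this counting argument in the general form of Lemma~\ref{lem:lemma9} (that $wa\notin D_n$ whenever $a\in D_0$ and $\ell(w)<n$) and then simply invokes that lemma with $\ell(w)=1$, whereas you inline the $n=2$ case directly.
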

\begin{proof}
Suppose that $r\in R$ and $x\in D_0$ are arbitrary. 
If $r x\in \Phi^+$, then Lemma~\ref{lem:lemma9} above yields that $rx \in
D_0\uplus D_1$.
On the other hand, if $r x \in \Phi^-$, then $x\in \Pi$, which in turn implies
that $r=r_x$ and $r x=-x \in -\Pi\subseteq -D_0$.
\end{proof}

Generalizing Lemma \ref{lem:lemma10}, we have:

\begin{lemma}
\label{lem:lemma11}
For all $n\geq 1$,
$$R D_n \subseteq D_{n-1} \uplus D_n \uplus D_{n+1}. $$ 
\end{lemma}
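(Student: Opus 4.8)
The plan is to write $r=r_c$ with $c\in\Pi$ and to compare the sets $D(x)$ and $D(rx)$ for a fixed $x\in D_n$ with $n\geq 1$. First I would check that $rx\in\Phi^+$: if instead $rx\in\Phi^-$, then $x\in N(r_c)=\{c\}$, forcing $x=c\in\Pi\subseteq D_0$ and contradicting $n\geq 1$. (This is precisely the observation already used in the proof of Lemma~\ref{lem:lemma10}.) Hence $rx$ is a genuine positive root and $x\neq c$, so that $\#D(rx)$ is defined and it makes sense to compare it with $n=\#D(x)$.

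The heart of the argument is a bookkeeping comparison of $\#D(x)$ and $\#D(rx)$ mediated by the involution $r$. I would set
$$A=\{\,y\in D(x)\mid ry\in\Phi^+\,\}\qquad\text{and}\qquad A'=\{\,z\in D(rx)\mid rz\in\Phi^+\,\},$$
and show that $y\mapsto ry$ restricts to a bijection $A\to A'$. Indeed, if $y\in A$ then the $W$-invariance of dominance (Lemma~\ref{lem:basicdom}~(ii)) gives $rx\dom ry$, with $ry\in\Phi^+$ and $ry\neq rx$ (because $y\neq x$), so $ry\in D(rx)$; moreover $r(ry)=y\in\Phi^+$, placing $ry\in A'$. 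Since $r$ is an involution the reverse assignment $z\mapsto rz$ is inverse to this, so $\#A=\#A'$.

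Next I would control the two complements using the identity $N(r_c)=\{c\}$. If $y\in D(x)\setminus A$, then $y\in\Phi^+$ while $ry\in\Phi^-$, whence $y\in N(r_c)=\{c\}$; thus $D(x)\setminus A\subseteq\{c\}$, and since $rc=-c\in\Phi^-$ this complement is exactly $D(x)\cap\{c\}$, of cardinality $1$ if $c\in D(x)$ and $0$ otherwise. The identical reasoning yields $D(rx)\setminus A'=D(rx)\cap\{c\}$, again of cardinality $0$ or $1$. Combining these decompositions gives $n=\#D(x)=\#A+\#(D(x)\cap\{c\})$ and $\#D(rx)=\#A'+\#(D(rx)\cap\{c\})=\#A+\#(D(rx)\cap\{c\})$, so $\#D(rx)-n$ is the difference of two quantities each lying in $\{0,1\}$ and therefore lies in $\{-1,0,1\}$. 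This is exactly the assertion that $rx\in D_{n-1}\uplus D_n\uplus D_{n+1}$.

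The only delicate point is verifying that the two complements consist of at most the single exceptional root $c$ and are disjoint from the bijective parts $A$ and $A'$; this is where the hypothesis $n\geq 1$ (guaranteeing $x\neq c$) and the identity $N(r_c)=\{c\}$ do the real work. Beyond this careful accounting I do not anticipate a genuine obstacle, since both the bijection and the complement count follow directly from the $W$-invariance of dominance.
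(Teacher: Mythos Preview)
Your proof is correct and uses essentially the same idea as the paper: both arguments hinge on the $W$-invariance of dominance together with the fact that $N(r_c)=\{c\}$, so at most one element of $D(x)$ (respectively $D(r_cx)$) can fail to correspond bijectively under $r_c$. The paper presents this as a pigeonhole contradiction (if $\#D(r_cx)\geq n+2$ then two of the $r_c y_i$ would be negative, impossible), whereas you give the explicit bijection and complement count, but the substance is the same.
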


\begin{proof}
Suppose that $n \geq 1$, and let $x\in D_n$, and $z\in \Pi$ be arbitrary. Since
$x\neq z$, it follows that $r_z x \in \Phi^+$.

Suppose for a contradiction that $r_z x \in D_m$ for some $m\geq n+2$. Let
$D(r_z x)=\{\, y_1, \ldots, y_m \,\}$. Then $x \dom r_z y_1,   \cdots,  r_z
y_m$. Since $x\in D_n$, and $m\geq n+2$, it follows that there are $1\leq i < j
\leq m$ with $r_z y_i \in \Phi^-$ and $r_z y_j \in \Phi^-$. But this is
impossible, since $r_z$ could only make one positive root negative. Therefore we
may conclude that $r_z x \notin D_m$ where $m \geq n+2$. A similar argument also
shows that $r_z x \notin D_{m'}$ where $m' \leq n-2$, and we are done. 
\end{proof}

\begin{lemma}
\label{lem:lemma13}
Suppose that $x$, $y$ are in $\Phi^+$ with $y \preceq x$. Let $w \in W$ be such
that $x = wy$ and $dp(x)=dp(y)+\ell(w)$. Then $y \in D_m$ implies that $x \in
D_n$ for some $n \geq m$. Furthermore, $w D(y) \subseteq D(x)$.
\end{lemma}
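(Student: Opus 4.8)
The plan is to unwind the definition of precedence and use the $W$-invariance of dominance. Recall that $y \preceq x$ means there exists $w \in W$ with $x = wy$ and $\dep(x) = \dep(y) + \ell(w)$. The statement has two parts: first, that $wD(y) \subseteq D(x)$, and second, that consequently $\#D(x) \geq \#D(y)$, which gives $y \in D_m \Rightarrow x \in D_n$ with $n \geq m$. I would prove the containment $wD(y) \subseteq D(x)$ first, since the cardinality statement follows immediately from it: $w$ acts injectively on $\Phi$, so $\#(wD(y)) = \#D(y) = m$, and if $wD(y) \subseteq D(x)$ then $\#D(x) \geq m$, i.e. $x \in D_n$ for some $n \geq m$.

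For the containment, let $v \in D(y)$ be arbitrary, so $v \in \Phi^+$, $v \neq y$, and $y \dom v$. By Lemma~\ref{lem:basicdom}~(ii), dominance is $W$-invariant, so applying $w$ gives $wy \dom wv$, that is, $x \dom wv$. Since $w$ is injective and $v \neq y$, we have $wv \neq wy = x$, so $wv$ is distinct from $x$. The one remaining thing to check is that $wv \in \Phi^+$ (rather than $\Phi^-$), for only then does $wv$ count as an element of $D(x) \subseteq \Phi^+$. This is where the depth hypothesis $\dep(x) = \dep(y) + \ell(w)$ must be used, and it is the main obstacle in the argument.

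To see that $wv \in \Phi^+$, I would argue via depth. Since $y \dom v$ with $y, v \in \Phi^+$, Lemma~\ref{lem:basicdom}~(i) gives $\dep(y) \geq \dep(v)$. The hypothesis $\dep(x) = \dep(y) + \ell(w)$ says that no length is ``wasted'' in passing from $y$ up to $x = wy$; intuitively $w$ drives $y$ as deep as possible, and the relation $\dep(wy) = \dep(y) + \ell(w)$ should propagate to the dominated root $v$. The key inequality to establish is $\dep(wv) = \dep(v) + \ell(w)$, or at least $\dep(wv) \geq \dep(v) > 0$ (using that $v \in \Phi^+$ has positive depth), which forces $wv \in \Phi^+$. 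Concretely, one uses that depth can change by at most one upon multiplication by a single generator (Lemma~\ref{lem:pre}), so $\dep(wv) \geq \dep(v) - \ell(w)$ is automatic; the substantive direction requires showing that if some $wv$ were negative, one could find a shorter expression realizing $\dep(x)$, contradicting $\dep(x) = \dep(y) + \ell(w)$. I expect the cleanest route is to write $w = s_k \cdots s_1$ as a reduced word and track, step by step using Lemma~\ref{lem:pre}, that each partial product keeps both the image of $y$ and the image of $v$ positive—the depth-additivity for $y$ guaranteeing that every intermediate generator strictly increases $\dep$, and the dominance $y \dom v$ transferring this positivity to $v$. This inductive tracking along the reduced word is the technical heart of the proof.

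Once $wv \in \Phi^+$ is secured, we conclude $wv \in D(x)$, and since $v \in D(y)$ was arbitrary, $wD(y) \subseteq D(x)$. Combined with the injectivity of $w$ this yields $\#D(x) \geq \#D(y)$, completing both assertions of the lemma.
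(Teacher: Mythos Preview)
Your overall strategy---prove $wD(y)\subseteq D(x)$ and deduce the cardinality statement, reducing to a step-by-step analysis along a reduced expression for $w$---is exactly the paper's approach. The paper phrases it as ``it is enough to show the result when $w=r_a$ for some $a\in\Pi$; the general case follows by induction on $\ell(w)$.''

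Where your sketch is genuinely gappy is the crucial step $wv\in\Phi^+$. Your proposed mechanism (``if $wv$ were negative, one could find a shorter expression realizing $\dep(x)$'') does not work as stated: a negative $wv$ with $x\dom wv$ gives no immediate contradiction to the depth hypothesis on $x$, and your depth inequality $\dep(wv)\ge\dep(v)-\ell(w)$ does not force positivity. The actual reason is much simpler and you nearly have the ingredients for it. In the single-step case $w=r_a$ with $a\in\Pi$, the condition $y\prec r_a y$ gives $(a,y)<0$ by Lemma~\ref{lem:pre}. Hence $(a,y)<1$, so by Lemma~\ref{lem:basicdom}(i) $y$ does \emph{not} dominate $a$; in particular $a\notin D(y)$. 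Since $a\in\Pi$, the reflection $r_a$ sends only the single positive root $a$ to a negative root, so $r_a v\in\Phi^+$ for every $v\in D(y)$. That is the whole argument for positivity, and it is what ``the dominance $y\dom v$ transferring this positivity to $v$'' should be replaced by. The inductive step then carries $D(y)$ forward one simple reflection at a time, exactly as you envisaged.
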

\begin{proof}
It is enough to show that the desired result holds in the case that $w=r_a$ for
some $a\in \Pi$. The more general proof then follows from an induction on
$\ell(w)$.

Since $x=r_a y$ and $y\prec x$, Lemma \ref{lem:pre} yields that $(a, y)<0$ and
so Lemma~\ref{lem:basicdom}~(v) yields that $a\notin D(y)$. Let $D(y)=\{\,z_1,
z_2, \ldots, z_m\,\}$. Then the fact $a\in \Pi$ implies $r_a D(y) \subset
\Phi^+$. Since dominance is $W$-invariant, it follows that $x\dom r_a z_i$ for
all $i\in \{\,1, 2\ldots, m\,\}$. Therefore $\{\,r_a z_1, r_a z_2, \ldots, r_a
z_m\,\}\subseteq D(x)$, whence $x\in D_n$ for some integer $n\geq m$, and $r_a
D(y)\subseteq D(x)$. 
\end{proof}

The next proposition, somewhat an analogue to Lemma \ref{lem:pre}, has many
applications, among which, we can deduce, for arbitrary positive root $x$, the
integer $n$ for which $x \in D_n$. Furthermore, it enables us to compute $D(x)$
explicitly  
as well as to obtain an algorithm to compute all the $D_n$'s systematically.

\begin{proposition}
 \label{pp:14}
Suppose that $x \in D_n$ with $n \geq 1$, and $a\in \Pi$. Then 
\begin{itemize}
\item [(i)] $r_a x \in D_{n-1}$ if and only if $(x,a) \geq 1$;
\item [(ii)] $r_a x \in D_{n+1}$ if and only if $(x,a) \leq -1$;
\item [(iii)] $r_a x \in D_n$  \ \  if and only if $(x, a) \in (-1, 1)$.
\end{itemize}
\end{proposition}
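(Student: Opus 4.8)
The plan is to compute $\#D(r_ax)$ exactly, as a function of the sign behaviour of $(x,a)$, and to read off all three biconditionals simultaneously. First I would record the elementary reductions. Since $n\ge 1$ we have $x\notin D_0$, whereas $a\in\Pi\subseteq D_0$; hence $x\neq a$, and because $N(r_a)=\{a\}$ the only positive root $r_a$ sends to a negative root is $a$ itself, so $r_ax\in\Phi^+$. The three hypotheses $(x,a)\ge 1$, $(x,a)\le -1$ and $(x,a)\in(-1,1)$ are mutually exclusive and exhaustive, and (as $n\ge 1$) the sets $D_{n-1},D_n,D_{n+1}$ are pairwise disjoint. Consequently it suffices to prove that $\#D(r_ax)$ equals $n-1$, $n+1$, $n$ in the three respective ranges: each ``if'' direction is then immediate, and each ``only if'' follows because the other two ranges of $(x,a)$ would place $r_ax$ in a different $D_m$.

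The key step is to describe $D(r_ax)$ explicitly. By the $W$-invariance of dominance (Lemma~\ref{lem:basicdom}~(ii)), for $z\in\Phi^+$ we have $r_ax\dom z$ if and only if $x\dom r_az$, so that $D(r_ax)=\{\,z\in\Phi^+\mid z\neq r_ax,\ x\dom r_az\,\}$. I would then run the substitution $w=r_az$, using that $r_a$ permutes $\Phi^+\setminus\{a\}$ and sends $a$ to $-a$: as $z$ ranges over the admissible positive roots, $w$ ranges over $\big((\Phi^+\setminus\{a\})\cup\{-a\}\big)\setminus\{x\}$ subject to $x\dom w$. The positive values of $w$ satisfying this are exactly $D(x)\setminus\{a\}$, while the single negative root $-a$ qualifies precisely when $x\dom -a$. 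Since $r_a$ is injective and $a=r_a(-a)\neq r_ay$ for every $y\in\Phi^+$, this yields the disjoint description $D(r_ax)=\{\,r_ay\mid y\in D(x),\ y\neq a\,\}$, together with the extra root $a$ exactly when $x\dom -a$; hence $\#D(r_ax)=\#\big(D(x)\setminus\{a\}\big)+\varepsilon$, where $\varepsilon=1$ if $x\dom -a$ and $\varepsilon=0$ otherwise.

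It then remains to evaluate the two indicators. Because $a\in D_0$ and $x\neq a$, any dominance between $x$ and $a$ must take the form $x\dom a$, so Lemma~\ref{lem:basicdom}~(v) gives $a\in D(x)$ if and only if $(x,a)\ge 1$; thus $\#\big(D(x)\setminus\{a\}\big)$ is $n-1$ when $(x,a)\ge 1$ and $n$ otherwise. Likewise, since $x\in\Phi^+$ and $-a\in\Phi^-$, Lemma~\ref{lem:basicdom}~(iv) gives $x\dom -a$ if and only if $(x,-a)\ge 1$, that is, $(x,a)\le -1$. Substituting these into the formula produces $\#D(r_ax)=n-1,\,n+1,\,n$ in the three ranges, as required. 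The one point demanding care --- and the main obstacle --- is the bookkeeping around the exceptional root $a$: it is the unique positive root whose $r_a$-image leaves $\Phi^+$, and it is responsible for \emph{both} indicator terms, since its membership in $D(x)$ lowers the count while its reflected image $-a$ may raise it. Ensuring this single root is counted in exactly the right places, via Lemma~\ref{lem:basicdom}~(iv) and (v), is precisely what yields the clean trichotomy.
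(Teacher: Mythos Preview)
Your proof is correct and is in fact cleaner than the paper's. Both arguments hinge on the $W$-invariance of dominance, but you carry the bookkeeping through to an explicit bijection: via the substitution $w=r_az$ you obtain the exact description
\[
D(r_ax)=r_a\bigl(D(x)\setminus\{a\}\bigr)\ \cup\ \bigl(\{a\}\text{ if }x\dom -a\bigr),
\]
from which the trichotomy drops out immediately using only Lemma~\ref{lem:basicdom}. The paper instead proves the forward direction of~(i) by a counting argument and the reverse direction via the precedence machinery (Lemma~\ref{lem:pre} and Lemma~\ref{lem:lemma13}), combined with the a~priori containment $RD_n\subseteq D_{n-1}\uplus D_n\uplus D_{n+1}$ from Lemma~\ref{lem:lemma11}; parts~(ii) and~(iii) are then deduced by symmetry and elimination. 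Your route avoids Lemmas~\ref{lem:lemma11} and~\ref{lem:lemma13} altogether and actually yields more: an explicit description of $D(r_ax)$ in terms of $D(x)$, not just its cardinality. The paper's approach, on the other hand, fits naturally into its broader development, since those auxiliary lemmas are needed elsewhere anyway.
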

\begin{proof}
(i):\qquad Suppose that $x\in D_n$ and $a\in \Pi$ such that $r_a x \in D_{n-1}$.
Let $D(x)=\{\, z_1, z_2, \ldots, z_n\,\}$. Since dominance is $W$-invariant, it
follows that $r_a x \dom r_a z_i$ for all $i\in \{\, 1, 2, \ldots, n\,\}$. Thus
at least one of $r_az_1, \ldots, r_a z_n$ must be negative. Without loss of
generality, we may assume that $r_a z_1 \in \Phi^-$. Since $a\in \Pi$, it
follows that $a=z_1$. Therefore $x\dom a$, and Lemma~\ref{lem:basicdom}~(v) then
yields that  $(x, a)\geq 1$.

Conversely, suppose that $x\in D_n$ and $a\in \Pi$ such that $(x, a)\geq 1$.
Then Lemma~\ref{lem:basicdom}~(i) yields that $x\dom a$; furthermore,
Lemma~\ref{lem:pre} yields that $r_a x\prec x$. Hence Lemma~\ref{lem:lemma13}
yields that 
\begin{equation}
\label{eq:Dn}
r_a D(r_a x)\subseteq D(x).
\end{equation}
Now suppose for a contradiction that $r_a x\notin D_{n-1}$. Then
Lemma~\ref{lem:lemma11} yields that $r_a x \in D_n\uplus D_{n+1}$. From
(\ref{eq:Dn}) it is clear that $r_a x \notin D_{n+1}$. But if $r_a x\in D_n$,
then (\ref{eq:Dn}) yields that $r_a D(r_a x) =D(x)$. Observe that $a\in D(x)$
and $a\notin r_aD(r_ax)$, producing a contradiction as desired.

(ii):\qquad Replace $x$ by $r_a x$ in (i) above then we may obtain the desired
result.

(iii):\qquad Follows from (i), (ii) and Lemma \ref{lem:lemma11}. 
  
\end{proof}

\begin{definition}
 \label{def:st}
For each $x\in \Phi^+$, define 
\begin{displaymath}
\begin{split}
S(x)&= \{\, w\in W \mid \ell(w)=dp(x)-1 \text{ and } w^{-1} x \in \Pi \,\},\\ 
T(x)&=\{\, w \in W \mid \ell(w)=dp(x) \text{ and } w^{-1} x \in \Phi^- \,\}.
\end{split}
\end{displaymath}

\end{definition}
In other words, for $x\in \Phi^+$, $S(x)$ (respectively, $T(x)$) consists of all
$w\in W$ of minimal length with $w^{-1} x \in \Pi$ (respectively, $w^{-1}x\in
-\Pi$). Note that for each $w\in S(x)$, there exist some $w'\in T(x)$ and $a\in
\Pi$ such that $w'=w r_a$ with $\ell(w')=\ell(w)+1$.

\begin{proposition}
\label{pp:16}
Suppose that $x\in \Phi^+$ and let $w\in S(x)$ be arbitrarily chosen. Then $x\in
D_n$ where $n=\#\{\,b\in N(w^{-1})\mid (x,b)\geq 1\,\}$. In particular, the
integer $n$ is independent of the choice of $w \in S(x)$.
\end{proposition}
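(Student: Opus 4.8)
The plan is to produce an explicit description of $D(x)$ and then simply count. Fix $w\in S(x)$ and put $a=w^{-1}x$; by the definition of $S(x)$ we have $a\in\Pi$ and $\ell(w)=\dep(x)-1$. First I would record two easy facts. Since $r_a a=-a\in\Phi^-$ and $\ell(r_a)=1$, we have $\dep(a)=1$, and so by Lemma~\ref{lem:basicdom}~(i) $a$ cannot dominate any positive root of depth $\geq 1$ other than itself; hence $a\in D_0$. Also $w^{-1}x=a\in\Phi^+$, so that $x\notin N(w^{-1})$.

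The crux is the identity
\[
D(x)=\{\,y\in N(w^{-1})\mid (x,y)\geq 1\,\}.
\]
To prove the inclusion $\subseteq$, take $y\in D(x)$, so $y\in\Phi^+$, $y\neq x$ and $x\dom y$. Transport by $w^{-1}$: since dominance is $W$-invariant (Lemma~\ref{lem:basicdom}~(ii)), $a=w^{-1}x$ dominates $w^{-1}y$. If $w^{-1}y\in\Phi^+$, then because $a$ is elementary we are forced to $w^{-1}y=a$, i.e.\ $y=x$, contradicting $y\neq x$; hence $w^{-1}y\in\Phi^-$, that is, $y\in N(w^{-1})$. Moreover $x\dom y$ with $x,y\in\Phi^+$ gives $(x,y)\geq 1$ by Lemma~\ref{lem:basicdom}~(i).

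For the reverse inclusion $\supseteq$, let $y\in N(w^{-1})$ with $(x,y)\geq 1$, so $y\in\Phi^+$ and $w^{-1}y\in\Phi^-$. Since the form is $W$-invariant, $(a,w^{-1}y)=(wa,y)=(x,y)\geq 1$; as $a\in\Phi^+$ and $w^{-1}y\in\Phi^-$, Lemma~\ref{lem:basicdom}~(iv) yields $a\dom w^{-1}y$, and applying the $W$-invariance of dominance once more gives $x\dom y$. Moreover $y\neq x$, since $w^{-1}y\in\Phi^-$ while $w^{-1}x\in\Phi^+$. Thus $y\in D(x)$, which establishes the identity. Counting both sides gives $\#D(x)=\#\{\,b\in N(w^{-1})\mid (x,b)\geq 1\,\}=n$, so $x\in D_n$; and since $\#D(x)$ depends only on $x$, the integer $n$ cannot depend on the choice of $w\in S(x)$.

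I expect the only genuinely delicate point to be the inclusion $D(x)\subseteq N(w^{-1})$: the whole argument hinges on conjugating the picture by $w^{-1}$ so that $x$ becomes the \emph{simple} root $a$, and then exploiting the elementarity of $a$ to exclude the possibility $w^{-1}y\in\Phi^+$. Once $w^{-1}y$ is known to be negative, Lemma~\ref{lem:basicdom}~(iv) is exactly the tool that converts the dominance $a\dom w^{-1}y$ into the single numerical inequality $(x,y)\geq 1$, after which the count is automatic.
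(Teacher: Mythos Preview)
Your argument is correct, and it is genuinely different from the paper's. The paper fixes a reduced expression $w=r_{a_1}\cdots r_{a_l}$ and invokes Proposition~\ref{pp:14} to count, one simple reflection at a time, how often the membership index drops along the chain $x,\,r_{a_1}x,\,r_{a_2}r_{a_1}x,\ldots,a$; it then identifies this count with $\#\{\,b\in N(w^{-1})\mid (x,b)\geq 1\,\}$ via the standard enumeration of $N(w^{-1})$, and only at the very end extracts the set equality $D(x)=\{\,b\in N(w^{-1})\mid (x,b)\geq 1\,\}$. You bypass Proposition~\ref{pp:14} entirely: the single observation that $w^{-1}$ carries $x$ to a simple (hence elementary) root, combined with Lemma~\ref{lem:basicdom}~(ii) and~(iv), gives the set equality directly. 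Your route is shorter and uses strictly less machinery; the paper's route, on the other hand, illustrates how Proposition~\ref{pp:14} governs the dominance count along a reduced word, which is the mechanism underlying the algorithm of Proposition~\ref{pp:22}.
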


\begin{proof}
Let $x\in \Phi^+$ and write $x= w a$ where $ w\in S(x)$ and $a\in \Pi$. Let
$w=r_{a_1}\cdots r_{a_l}$ be such that $l=\ell(w)$ and $a_1, a_2, \cdots,
a_l\in\Pi$. Observe that for each $i\in \{\, 2, \ldots, l\,\}$, 
\begin{align}
\label{eq:l1}
w^{-1}(r_{a_1}r_{a_2}\cdots r_{a_{i-2}})a_{i-1}&=r_{a_l}\cdots r_{a_1}
r_{a_1}\cdots r_{a_{i-2}}a_{i-1}\nonumber\\
&=r_{a_l}\cdots r_{a_i} r_{a_{i-1}} a_{i-1}\nonumber\\
&=-r_{a_l}\cdots r_{a_i}a_{i-1}.
\end{align} 
Under our assumptions $\ell(r_{a_l} r_{a_{l-1}}\cdots r_{a_i}
r_{a_{i-1}})=\ell(r_{a_l} \cdots r_{a_i} )+1$ and $\ell (r_{a_1} r_{a_2}\cdots
r_{a_{i-2}}r_{a_{i-1}})=\ell(r_{a_1}r_{a_2}\cdots r_{a_{i-2}})+1$,  hence
Proposition~\ref{pp:anu3}~(i) yields that $r_{a_l}\cdots r_{a_i}a_{i-1}\in
\Phi^+$ and $r_{a_1}r_{a_2}\cdots r_{a_{i-2}}a_{i-1}\in \Phi^+$. Thus
(\ref{eq:l1}) yields that
\begin{equation}
\label{eq:add}
 (r_{a_1}r_{a_2}\cdots r_{a_{i-2}})a_{i-1}\in N(w^{-1}).
\end{equation}
Now by Proposition \ref{pp:14}, we can immediately deduce that $x\in D_n$ where 
\begin{displaymath}
    \begin{split}
      n &= \#  \{\, i \mid (a_{i-1}\ ,\ r_{a_i} r_{a_{i+1}} \cdots r_{a_l} a)
\leq -1      \,\}\\
      &= \# \{\,  i\mid (r_{a_1}\cdots r_{a_{i-1}}(a_{i-1})\ ,\ r_{a_1}\cdots
r_{a_l}(a)) \leq -1 \,\}\\
      &= \# \{\,   i\mid  (r_{a_1} \cdots r_{a_{i-1}}(a_{i-1})\ ,\ x) \leq -1   
              \,\} \\
      &= \# \{\, i\mid  (-r_{a_1} \cdots r_{a_{i-2}}(a_{i-1})\ ,\ x) \ \leq -1  
           \,\}\\
      &= \# \{\, b\in N(w^{-1})\mid (-b, x)\leq -1              \,\}\\
      &= \# \{\,  b\in N(w^{-1}) \mid (b, x)\geq 1         \,\}.
    \end{split}
  \end{displaymath}
Lemma \ref{lem:basicdom} (v) then yields that either $x \dom b$ or $b\dom x$.
Since all such $b$ are in $N(w^{-1})$ where $w\in S(x)$, it follows that $w^{-1}
x \in \Pi$ and $w^{-1}b \in \Phi^-$. Thus $b$ cannot dominate $x$. So we may
conclude that $x\in D_n$, where 
 \begin{equation}
    \label{eq:2}
    n = \#\{\, b\in N(w^{-1})\mid x \dom b   \,\},
  \end{equation}
for all $w\in S(x)$.
But \eqref{eq:2} says precisely that $D(x)\subseteq N(w^{-1})$ and 
\begin{displaymath}
\begin{split}
D(x)&= \{\, b \in N(w^{-1} )\mid x\dom b\, \} \\
    &=\{ \, b \in N(w^{-1}) \mid (x, b)\geq 1\, \}. 
\end{split}
\end{displaymath}
\end{proof}

From the above proof we immediately have:

\begin{corollary}
Let $x \in \Phi^+$. Then 
$D(x) \subseteq \bigcap\limits_{w \in S(x)}N(w^{-1})$.
\qed
\end{corollary}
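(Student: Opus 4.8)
The plan is to read off the statement directly from the final display in the proof of Proposition~\ref{pp:16}. That proof did not merely compute the integer $n$ for which $x\in D_n$; it concluded with the stronger identity
\begin{equation*}
D(x)=\{\,b\in N(w^{-1})\mid x\dom b\,\}=\{\,b\in N(w^{-1})\mid (x,b)\geq 1\,\},
\end{equation*}
and crucially this identity was derived for an \emph{arbitrarily chosen} $w\in S(x)$. In particular, the leftmost equality already yields the containment $D(x)\subseteq N(w^{-1})$ for each such $w$.

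First I would emphasize that the left-hand side $D(x)$ is a single fixed subset of $\Phi^+$, determined entirely by $x$ via Definition~\ref{def:dn}~(ii), and is therefore independent of the choice of $w$. Thus the proof of Proposition~\ref{pp:16} establishes a containment $D(x)\subseteq N(w^{-1})$ that holds simultaneously for every $w\in S(x)$, with the same set $D(x)$ on the left in every instance. Once this uniformity is observed, the corollary follows by the elementary fact that a set contained in each member of a family of sets is contained in their intersection:
\begin{equation*}
D(x)\subseteq N(w^{-1})\ \text{for all }w\in S(x)\quad\Longrightarrow\quad D(x)\subseteq\bigcap_{w\in S(x)}N(w^{-1}).
\end{equation*}

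There is essentially no obstacle here: all the genuine work—namely identifying $D(x)$ with a subset of $N(w^{-1})$ for a given $w$—was carried out inside Proposition~\ref{pp:16}, and the corollary is a purely formal consequence. The only point requiring a moment's care is to confirm that the final display of that proof is valid for every $w\in S(x)$ and not just for one distinguished choice; this is guaranteed by the opening phrase ``let $w\in S(x)$ be arbitrarily chosen,'' so no additional argument is needed. Accordingly, the proof amounts to little more than invoking the concluding identity of Proposition~\ref{pp:16} and passing to the intersection.
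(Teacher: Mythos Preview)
Your proposal is correct and matches the paper's approach exactly: the paper simply writes ``From the above proof we immediately have'' and marks the corollary with \qed, precisely because the containment $D(x)\subseteq N(w^{-1})$ for arbitrary $w\in S(x)$ was established in the final display of the proof of Proposition~\ref{pp:16}. Your explanation merely unpacks this immediate inference, and there is nothing to add.
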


It turns out that we can also say something about the roots in $\bigcap_{w\in
S(x)}N(w^{-1})\setminus D(x)$. Indeed in the next two lemmas we deduce that if
$b\in \bigcap_{w\in S(x)} N(w^{-1})$, then $(x, b)> 0$.

\begin{lemma}
\label{lem:T}
Suppose that $x \in \Phi^+$, $w\in T(x)$ and $b \in N(w^{-1})$. Then $(b, x) >
0$.
\end{lemma}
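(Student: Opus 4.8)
The plan is to fix a reduced expression $w^{-1} = r_{a_1} r_{a_2} \cdots r_{a_k}$ with each $a_i \in \Pi$ and $k = \ell(w) = \dep(x)$, and to work with the standard description of the inversion set (established by the arguments of \cite{HM} that give $\#N(\cdot) = \ell(\cdot)$), namely
\[
N(w^{-1}) = \{\, b_j \mid 1 \leq j \leq k \,\}, \qquad b_j := r_{a_k} r_{a_{k-1}} \cdots r_{a_{j+1}} a_j.
\]
Writing $d_i := r_{a_i} r_{a_{i-1}} \cdots r_{a_2} a_1$ (so $d_1 = a_1$), I would split the argument into two parts: first identify $x$ with the longest inversion $b_1 = d_k$, and then show, by counting depths along the chain $d_1, d_2, \ldots, d_k$, that $(a_i, d_{i-1}) < 0$ for all $i$. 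Granting these, the $W$-invariance of $(\,,\,)$ gives, for $j \geq 2$, the simplification $(b_j, x) = (r_{a_{j+1}} \cdots r_{a_k} b_j,\, r_{a_{j+1}} \cdots r_{a_k} x) = (a_j, d_j) = (a_j, r_{a_j} d_{j-1}) = -(a_j, d_{j-1}) > 0$, while $j = 1$ is immediate from $(x,x) = 1$.

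For the first part I would use that $w \in T(x)$ forces $\dep(x) = k$, and apply the reflections $r_{a_k}, r_{a_{k-1}}, \ldots, r_{a_1}$ to $x$ one at a time. By Lemma~\ref{lem:pre} a simple reflection changes the depth of a positive root by at most one, whereas a positive root is sent to a negative root by $r_a$ only when it equals $a$, a root of depth $1$ (recall $N(r_a) = \{a\}$). Starting from the root $x$ of depth $k$, the first sign change can thus occur only at the last of the $k$ steps; this pins down $r_{a_2} \cdots r_{a_k} x = a_1$, hence $x = d_k = b_1$ and $w^{-1}x = -a_1$.

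For the second part, observe that each $d_i$ is the longest inversion of the prefix $r_{a_1} \cdots r_{a_i}$ and so lies in $\Phi^+$, while $d_i = r_{a_i} d_{i-1}$; moreover $\dep(d_1) = 1$ and, by the first part, $\dep(d_k) = \dep(x) = k$. Since each of the $k-1$ steps alters the depth by at most one (Lemma~\ref{lem:pre}) yet the total increase is $k-1$, every step must raise the depth by exactly one, and Lemma~\ref{lem:pre} then forces $(a_i, d_{i-1}) < 0$ for $2 \leq i \leq k$, completing the argument. I expect the main obstacle to lie precisely here: one must show that the minimality built into $w \in T(x)$ (namely $\ell(w) = \dep(x)$) is what forces the chain to be strictly depth-increasing --- this is false for a general $w$, where a step of zero depth change would produce an inner product equal to $0$ rather than strictly positive --- and one must check that every $d_i$ stays positive so that Lemma~\ref{lem:pre} applies at each step.
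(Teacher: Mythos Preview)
Your proof is correct. The paper proceeds by induction on $\dep(x)$: it picks $a \in \Pi \cap N(w^{-1})$, observes that $r_a w \in T(r_a x)$ with $\dep(r_a x) = \dep(x) - 1$ (so in particular $(a,x) > 0$ by Lemma~\ref{lem:pre}), and then applies the inductive hypothesis to the pair $(r_a x,\, r_a w)$ and transports the inequality back by $W$-invariance of $(\,,\,)$. Your argument is essentially this induction unrolled: fixing a reduced word $w^{-1} = r_{a_1}\cdots r_{a_k}$, you first identify $x$ with the deepest inversion $r_{a_k}\cdots r_{a_2} a_1$, and then use a pigeonhole count on depths along the chain $d_1,\ldots,d_k$ to force every step $d_{i-1} \mapsto d_i = r_{a_i} d_{i-1}$ to raise depth by exactly one. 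Both arguments hinge on the same point --- that $\ell(w) = \dep(x)$ leaves no slack for a depth-neutral step --- and both invoke Lemma~\ref{lem:pre} at each stage; the paper's inductive packaging is a bit more economical since it never needs the separate identification $x = b_1$, while your version has the virtue of making the role of the minimality hypothesis completely explicit.
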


\begin{proof}
If $\dep(x)=1$ then $x\in \Pi$, whence $T(x)=\{\,r_x\,\}$ and $x=b$, and so
$(b,x)=1$ as required.
Thus we may assume that $\dep(x)>1$ and proceed by an induction on $\dep(x)$.
Let $a\in \Pi\cap N(w^{-1})$. Then 
$$
\ell(r_a w)=\ell(w^{-1}r_a)=\ell(w^{-1})-1 =\ell(w)-1.
$$
Now since $(r_aw)^{-1}(r_a x)=w^{-1}x \in \Phi^-$, it follows that 
$$
\dep(r_a x)\leq \ell(r_a w)<\ell(w)=\dep(x),
$$
and hence Lemma \ref{lem:pre} yields that $(a, x)>0$. If $b=a$ then we are done,
thus we may assume that $b\neq a$ (in particular, $r_a b\in \Phi^+$) and let
$w'=r_a w$. Observe that then $w'\in T(r_a x)$. Since $b\in N(w^{-1})$, it
follows that  $r_a b\in N(w'^{-1})$ and so the inductive hypothesis yields that
$(r_a b, r_a x)> 0$. Finally since $(\,,\,)$ is $W$-invariant, it follows that
$(b, x)>0$ as required. 
\end{proof}

\begin{lemma}
\label{lem:S}
Suppose that $x \in \Phi^+$, $w\in S(x)$ and $b \in N(w^{-1})$. Then $(b, x) >
0$.
\end{lemma}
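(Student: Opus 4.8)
The plan is to deduce Lemma~\ref{lem:S} from the already-established Lemma~\ref{lem:T} by passing from the given element $w\in S(x)$ to a suitable element of $T(x)$. First I would invoke the observation recorded immediately after Definition~\ref{def:st}: since $w\in S(x)$, there exist $w'\in T(x)$ and $a\in\Pi$ with $w'=wr_a$ and $\ell(w')=\ell(w)+1$. Passing to inverses gives $w'^{-1}=r_aw^{-1}$, and since length is invariant under inversion, $\ell(w'^{-1})=\ell(w')=\ell(w)+1=\ell(w^{-1})+1$, so that left multiplication by $r_a$ raises the length of $w^{-1}$.

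The crux of the argument is then to establish the inclusion $N(w^{-1})\subseteq N(w'^{-1})$. I would obtain this from the length criterion recalled in Section~\ref{sec:intro}, namely that $N(v)\subseteq N(u)$ if and only if $\ell(uv^{-1})+\ell(v)=\ell(u)$. Taking $u=w'^{-1}$ and $v=w^{-1}$, one computes $uv^{-1}=w'^{-1}w=r_aw^{-1}w=r_a$, so the criterion reduces to the identity $\ell(r_a)+\ell(w^{-1})=\ell(w'^{-1})$, that is $1+\ell(w)=\ell(w)+1$, which plainly holds. Hence $N(w^{-1})\subseteq N(w'^{-1})$.

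Alternatively, I could verify the inclusion directly: for $c\in N(w^{-1})$ the root $w^{-1}c$ is negative and cannot equal $-a$, for otherwise $c=-wa\in\Phi^-$ (using that $\ell(wr_a)=\ell(w)+1$ forces $wa\in\Phi^+$ by Proposition~\ref{pp:anu3}~(i)), contradicting $c\in\Phi^+$; since $r_a$ preserves the negative roots other than swapping $\pm a$, it follows that $r_aw^{-1}c=w'^{-1}c\in\Phi^-$, giving $c\in N(w'^{-1})$. With the inclusion in hand the proof finishes at once: any $b\in N(w^{-1})$ now lies in $N(w'^{-1})$, and since $w'\in T(x)$, Lemma~\ref{lem:T} yields $(b,x)>0$, as required.

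I expect the only point needing genuine care to be the inclusion $N(w^{-1})\subseteq N(w'^{-1})$; everything else is bookkeeping with lengths and the definitions of $S(x)$ and $T(x)$. In effect this lemma is a short reduction of the $S(x)$-case to the $T(x)$-case, exploiting that each $w\in S(x)$ sits one step below a corresponding $w'\in T(x)$.
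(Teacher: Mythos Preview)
Your proposal is correct and follows exactly the approach the paper uses: the paper's proof simply says that the result follows from Lemma~\ref{lem:T} together with the fact that for each $w\in S(x)$ there is $w'\in T(x)$ with $N(w^{-1})\subset N(w'^{-1})$. You have supplied the details the paper leaves implicit (the construction of $w'=wr_a$ from the remark after Definition~\ref{def:st} and the verification of the inclusion via the length criterion), but the strategy is identical.
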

\begin{proof}
Follows from Lemma \ref{lem:T} and the fact that for each $w \in S(x)$ there is
a $w' \in T(x)$ such that $N(w^{-1}) \subset N(w'^{-1})$.
\end{proof}

\begin{lemma}
\label{lem:18}
For $n \in \N$, if $D_n = \emptyset$, then $D_m =\emptyset$ for all $m\in \N$
such that $m>n$.
\end{lemma}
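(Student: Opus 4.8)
The plan is to prove the contrapositive: assuming $D_m \neq \emptyset$ for some $m > n$, I will produce a root in $D_n$. The key idea is that from any positive root one can descend to a simple root through a chain of depth-decreasing simple reflections, and that along such a chain the index $k$ determined by $x \in D_k$ drops by at most one at each step and never increases. Since the chain starts at some $x \in D_m$ and terminates at a simple root, which lies in $D_0$, the sequence of indices runs from $m$ down to $0$ in unit decrements and must therefore attain every intermediate value, in particular $n$.

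First I would construct the descending chain. Fix $x \in D_m$ and set $d = \dep(x)$; note $d \geq 2$, since by Lemma~\ref{lem:basicdom}~(i) the root $x$ dominates some positive root of strictly smaller depth. Choosing $w \in S(x)$ and writing $w = r_b w'$ reduced with $b \in \Pi$, the relation $w^{-1}x = w'^{-1}(r_b x) \in \Pi$ gives $\dep(r_b x) \leq \ell(w') + 1 = d - 1$, while Lemma~\ref{lem:pre} gives $\dep(r_b x) \geq d - 1$; hence $\dep(r_b x) = d-1$, $(x, b) > 0$, and $w' \in S(r_b x)$. Iterating produces simple roots $b_1, \ldots, b_{d-1} \in \Pi$ and a chain
\[
x = x_0, \qquad x_i = r_{b_i} x_{i-1} \ \ (1 \le i \le d-1),
\]
with $\dep(x_i) = d - i$, with $(x_{i-1}, b_i) > 0$ for each $i$, and with $x_{d-1} \in \Pi \subseteq D_0$.

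Next I would track the index along the chain, writing $x_{i-1} \in D_{k_{i-1}}$. Because $(x_{i-1}, b_i) > 0$, Proposition~\ref{pp:14} shows that whenever $k_{i-1} \geq 1$ we have $x_i \in D_{k_{i-1}-1}$ (if $(x_{i-1},b_i) \geq 1$) or $x_i \in D_{k_{i-1}}$ (if $(x_{i-1},b_i) \in (0,1)$), so that $k_i \in \{k_{i-1}-1,\, k_{i-1}\}$. The one case not covered by Proposition~\ref{pp:14} is $k_{i-1} = 0$, and this is the step I expect to be the only genuine obstacle. The apparent danger is that Lemma~\ref{lem:lemma10} only confines $x_i$ to $D_0 \uplus D_1$, leaving open the possibility $x_i \in D_1$, which would make the index \emph{increase}. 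I would rule this out by the direct computation $(x_i, b_i) = (r_{b_i}x_{i-1}, b_i) = -(x_{i-1}, b_i) < 0$: if $x_i$ lay in $D_1$, then Proposition~\ref{pp:14}~(i) applied to $x_i$ and $b_i$ (note $r_{b_i} x_i = x_{i-1} \in D_0$) would force $(x_i, b_i) \geq 1$, a contradiction. Hence $x_i \in D_0$ and $k_i = 0 = k_{i-1}$ here as well.

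Combining the two cases, the sequence $k_0 = m,\, k_1,\, \ldots,\, k_{d-1} = 0$ is non-increasing with consecutive terms differing by at most one, so it takes every value in $\{0, 1, \ldots, m\}$. Since $0 \le n < m = k_0$, some $x_i$ lies in $D_n$, whence $D_n \neq \emptyset$. This establishes the contrapositive and hence the lemma.
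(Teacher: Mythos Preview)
Your proof is correct and follows essentially the same strategy as the paper's: descend from a root in some nonempty $D_m$ to a simple root along a depth-decreasing chain of simple reflections, and track the dominance level along the way. The paper carries this out by contradiction (assuming $D_n=\emptyset$ and $D_{n+1}\neq\emptyset$, then invoking Lemmas~\ref{lem:lemma11} and~\ref{lem:lemma13} to trap every term of the chain in $D_{n+1}$), whereas you use the sharper Proposition~\ref{pp:14} to show the level drops by at most one per step; note that your separate treatment of the case $k_{i-1}=0$ is correct but unnecessary, since the non-increasing sequence with unit drops must attain the value $n$ before it can reach $0$.
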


\begin{proof}
Suppose for a contradiction that there exists $n\in \N$ such that $D_n =
\emptyset$ and yet $D_{n+1} \neq \emptyset$. Let $x \in D_{n+1}$. Then
Lemma~\ref{lem:lemma11} yields that $r_a x \in D_{n+1}\uplus D_{n+2}$, for all
$a\in \Pi$. Furthermore, Lemma~\ref{lem:lemma13} yields that if $a\in \Pi$ such
that $r_a x \prec x$ then $r_a x \in D_{n+1}$ still. Write $x= wb$, where  $b\in
\Pi$, and $w\in S(x)$. Suppose that $w=r_{a_1}r_{a_2} \cdots r_{a_l}$ with
$\ell(w)=l$ and $a_1, a_2, \ldots, a_l\in \Pi$. Then $r_{a_i}\cdots r_{a_2}
r_{a_1} x \in D_{n+1}$, for all $i \in \{\, 1, \ldots , l \,\}$, and in
particular, $b = r_{a_l} \cdots r_{a_1} x \in D_{n+1}$, contradicting the fact
that $b \in \Pi\subset D_0$.
\end{proof}

\begin{corollary}
\label{cor:c19}
Let $W$ be an infinite Coxeter group with $\#R<\infty$. Then for each
nonnegative integer $n$, the corresponding $D_n$ is non-empty.
\end{corollary}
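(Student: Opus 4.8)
The plan is to combine three facts already established: that the positive system $\Phi^+$ is infinite when $W$ is infinite, that each $D_n$ is finite (Corollary~\ref{cor:cor7}), and that emptiness of the $D_n$ propagates upward (Lemma~\ref{lem:18}). The first two facts will force infinitely many of the sets $D_n$ to be nonempty, and the third will then guarantee that no ``gaps'' can occur among the lower indices, so that every $D_n$ is nonempty. First I would record that $\Phi^+$ is infinite: since $W$ is infinite, Proposition~\ref{pp:anu3}~(iii) yields that $\Phi$ is infinite, and as $\Phi=\Phi^+\biguplus\Phi^-$ with $\Phi^-=-\Phi^+$, it follows that $\Phi^+$ is infinite as well. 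I would also recall the disjoint decomposition $\Phi^+=\biguplus_{n\in\N}D_n$ noted just after Definition~\ref{def:dn}.

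Next I would argue that infinitely many of the $D_n$ are nonempty. Suppose instead that only finitely many, say $D_{n_1},\dots,D_{n_k}$, were nonempty. Then $\Phi^+=D_{n_1}\uplus\cdots\uplus D_{n_k}$ would be a finite union of sets each of which is finite by Corollary~\ref{cor:cor7} (this is exactly where the hypothesis $\#R<\infty$ enters). Hence $\Phi^+$ would be finite, contradicting the previous paragraph. Therefore the index set $\{\,n\in\N\mid D_n\neq\emptyset\,\}$ is infinite, hence unbounded.

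Finally I would invoke Lemma~\ref{lem:18} in its contrapositive form: if $D_m\neq\emptyset$ for some $m>n$, then $D_n\neq\emptyset$. Fixing an arbitrary $n\in\N$, the unboundedness just established provides some $m>n$ with $D_m\neq\emptyset$, whence $D_n\neq\emptyset$; since $n$ was arbitrary, every $D_n$ is nonempty. I do not expect a genuine obstacle here, as each ingredient is already available; the one point requiring care is the logical shape of the last step. Lemma~\ref{lem:18} by itself only propagates emptiness \emph{upward}, so it cannot directly produce nonemptiness at a given index. One must first secure an \emph{unbounded} supply of nonempty $D_n$ — which is precisely what the finiteness of each $D_n$ together with the infinitude of $\Phi^+$ delivers — before the lemma can be used in contrapositive form to fill in all the lower indices.
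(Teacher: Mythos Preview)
Your argument is correct and follows essentially the same route as the paper: use that $\Phi^+=\biguplus_{n\in\N}D_n$ is infinite (Proposition~\ref{pp:anu3}~(iii)) while each $D_n$ is finite (Corollary~\ref{cor:cor7}, which the paper attributes to Theorem~\ref{th:dnfinite}), and then invoke Lemma~\ref{lem:18} to rule out any empty $D_n$. The only difference is that you spell out the contrapositive use of Lemma~\ref{lem:18} more explicitly than the paper does.
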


\begin{proof}
It is clear from the definition of the $D_n$'s that $\Phi^+ = \biguplus_{n \geq
0} D_n$. Since $W$ is an infinite Coxeter group, Proposition~\ref{pp:anu3}~(iii)
yields that $\#\Phi^+ = \infty$. On the other hand, since $\#R<\infty$,
Theorem~\ref{th:dnfinite} yields that for each nonnegative integer $n$, $\#D_n
<\infty$. Thus the desired result follows from Lemma \ref{lem:18}.
\end{proof}

The following is a generalization of Proposition \ref{pp:14}:

\begin{proposition}
\label{pp:general}
Suppose that $x\in D_n$ with $n > 0$, and let $a\in \Phi^+$. Then  
\begin{itemize}
\item [(i)] $\#D(r_a x) < n$  if $(x,a) \geq 1$;
\item [(ii)] $\#D(r_a x)  > n$  if $(x,a) \leq -1$.

\end{itemize}
\end{proposition}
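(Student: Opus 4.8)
The plan is to generalize Proposition~\ref{pp:14}, which handled the case $a \in \Pi$, to an arbitrary positive root $a \in \Phi^+$. The essential difficulty is that when $a$ is merely a positive root rather than a simple root, reflecting by $r_a$ can simultaneously change the dominance status of \emph{several} elements of $D(x)$, whereas a simple reflection could only move one positive root to the negative side. So the bookkeeping used in Proposition~\ref{pp:14} no longer applies directly, and I must instead track the image $r_a D(x)$ as a whole and account for possible \emph{new} dominated roots created by the reflection. I would write $D(x) = \{z_1, \ldots, z_n\}$ throughout.

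For part~(i), I would assume $(x,a) \geq 1$. Since dominance is $W$-invariant (Lemma~\ref{lem:basicdom}~(ii)), we have $r_a x \dom r_a z_i$ for each $i$, so each $r_a z_i$ that remains positive lies in $D(r_a x)$; call this set $P = \{\, r_a z_i \mid r_a z_i \in \Phi^+ \,\}$. The first step is to show $\#P < n$, which amounts to exhibiting at least one index $i$ with $r_a z_i \in \Phi^-$. Because $(x,a) \geq 1$, Lemma~\ref{lem:basicdom}~(i) gives $x \dom a$, so in fact $a \in D(x)$ unless $a = x$; I would note that $a \neq x$ here (if $a = x$ then $r_a x = -x \in \Phi^-$ and $r_a x$ dominates nothing positive, so the inequality $\#D(r_a x) = 0 < n$ is immediate), hence $a = z_i$ for some $i$, and then $r_a z_i = r_a a = -a \in \Phi^-$. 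This secures at least one ``lost'' element. The remaining task is to argue that $r_a x$ gains no compensating new dominated roots: if $z \in D(r_a x)$ then $x \dom r_a z$ by $W$-invariance, and I would run the same inner-product estimate as in Theorem~\ref{th:d1} and Theorem~\ref{th:dnfinite}, namely $1 \leq (r_a x, z) = (x,z) - 2(x,a)(a,z)$, to force $(x,z) \geq 1$ and hence (via Lemma~\ref{lem:basicdom}~(v) and Lemma~\ref{lem:reflx}~(ii), exactly as in those earlier proofs) that $z$ must already be accounted for by some $z_j \in D(x)$. This pins $D(r_a x)$ inside the image of $D(x)$ minus at least one element, giving $\#D(r_a x) < n$.

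For part~(ii), I would reduce to part~(i) by substitution, mirroring the trick used in Proposition~\ref{pp:14}~(ii). Set $x' = r_a x$; then $(x', a) = (r_a x, a) = (x, r_a a) = -(x,a) \geq 1$ precisely when $(x,a) \leq -1$. The subtlety is that part~(i) is stated for $x' \in D_n$ with a \emph{known} subscript, whereas here $\#D(x') = \#D(r_a x)$ is exactly what we wish to bound. So I would instead apply the estimate of part~(i) in the reverse direction: writing $m = \#D(r_a x)$ and noting $r_a(r_a x) = x$ with $(r_a x, a) \geq 1$, part~(i) applied to $r_a x$ yields $\#D(x) < m$, i.e. $n < \#D(r_a x)$, which is the claim. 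The main obstacle I anticipate is the edge case $a = x$ and, more generally, ensuring the ``no new dominated roots'' argument is airtight when $r_a z \in \Phi^-$ versus $\Phi^+$; but the inner-product computation from the proofs of Theorem~\ref{th:d1} and Theorem~\ref{th:dnfinite} transfers almost verbatim, so I expect the real content to be the counting step in part~(i) and the clean reduction in part~(ii).
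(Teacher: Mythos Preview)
Your route differs substantially from the paper's. The paper proves (i) by induction on $\dep(a)$: the base case $a\in\Pi$ is Proposition~\ref{pp:14}; for the step one writes $a=r_b c$ with $b\in\Pi$ and $\dep(c)=\dep(a)-1$, uses $r_a=r_b r_c r_b$, applies the inductive hypothesis to the pair $(r_b x,\,c)$ (noting $(r_b x,c)=(x,a)\ge 1$), and then performs a three-way case split on whether $(b,x)\ge 1$, $\le -1$, or lies in $(-1,1)$, combining Lemma~\ref{lem:lemma11} and Proposition~\ref{pp:14} to control the two outer $r_b$'s. Your direct-injection idea bypasses that case analysis entirely and is arguably cleaner.

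There is, however, a real gap in your plan. The claim ``$(x,a)\ge 1$, Lemma~\ref{lem:basicdom}~(i) gives $x\dom a$'' is false as stated: that lemma also requires $\dep(x)\ge\dep(a)$, and nothing in the hypotheses excludes $a\dom x$ strictly. You must dispose of that case first (if $a\dom x$ with $a\ne x$ then $-a=r_a a\dom r_a x$, and taking $w=1$ in the definition forces $r_a x\in\Phi^-$, whence $D(r_a x)=\emptyset$). Only then may you assume $a\in D(x)$ and run the injection. A smaller imprecision: ``pins $D(r_a x)$ inside the image of $D(x)$ minus one element'' is not literally right, since a $z\in D(r_a x)$ with $r_a z\in\Phi^-$ lands in $D(x)$ itself rather than in $r_a D(x)$. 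The map you actually want sends $z\mapsto r_a z$ when $r_a z\in\Phi^+$ and $z\mapsto z$ otherwise; check that it is injective (a collision between the two branches would force $z=r_a z'\in\Phi^-$) and that both branches avoid $a$. With these fixes your argument is complete, and your reduction in (ii) then goes through once you note that $(x,a)\le -1$ makes $r_a x=x-2(x,a)a\in\PLC(\Pi)\cap\Phi=\Phi^+$ with $r_a x\dom a$, so that $r_a x\in D_m$ for some $m\ge 1$ and part~(i) legitimately applies to it.
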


\begin{proof}

(i)\qquad If $\dep(a)=1$ then this is just Proposition \ref{pp:14}. Hence we may
assume that $\dep(a) > 1$, and proceed by an induction on $\dep(a)$.

Write $a = r_b c$ where $b\in \Pi$ and $c\in \Phi^+$. Then $r_a =r_b r_c r_b$.
Furthermore, suppose that  
 \begin{equation}
    \label{eq:3}
    \dep(a) = \dep(c)+1.
  \end{equation}
Now since $(x, a)=(x, r_b c)=(r_b x, c) \geq 1$, it follows from the inductive
hypothesis that
\begin{equation}
\label{eq:4}
\# D(r_c(r_b x))   <\#D(r_b x).
\end{equation}
Then we have three possibilities to consider:
\begin{itemize}
\item [1)] $(b , x) \geq 1$; 
\item [2)] $(b , x) \leq -1$;
\item [3)] $(b , x) \in (-1,1 )$.
\end{itemize}

If 1) is the case, then Proposition \ref{pp:14} yields that $r_b x \in D_{n-1}$
and hence 
\begin{align*}
  %\begin{split}
    \#D(r_a x) &= \# D(r_b(r_c r_b x))   \\
               &\leq \#D(r_c (r_b x))+1 \qquad&\text{ (follows from Lemma
\ref{lem:lemma11} )}\\
    &\leq \#D(r_b x)    &\text{ (follows from \eqref{eq:4})}        \\
    &= n-1,
  %\end{split}
\end{align*}
as required.

If 2) is the case, then Proposition \ref{pp:14} yields that $r_b x \in D_{n+1}$,
and $(b, r_c(r_b x))=(b, r_bx-2(r_b x, c\,)c\,)=(b, r_b x)-2(x, a)(b,c)$.
Observe that Lemma~\ref{lem:pre} and \eqref{eq:3} together yield that $(b,c)<0$
and since by assumption $(x, a)\geq 1$, it follows that
\begin{equation}
\label{eq:5}
 (b, r_c(r_b x))>(b, r_b x)\geq 1. 
\end{equation}
Then  
\begin{align*}
%\begin{split}
\#D(r_a x)& = \#D(r_b (r_c r_b x))\\
          & = \#D(r_c r_b x)-1\quad &\text{(by \eqref{eq:5} above and
Proposition \ref{pp:14})}\\
          & \leq \#D(r_b x)-2 &\text{ (by \eqref{eq:4})}\\
          & \leq n-1 &\text{(since $r_b x \in D_{n+1}$ in case 2))}
%\end{split}
\end{align*} 
as required.

If 3) is the case, then we are done unless $\#D(r_c (r_b x))= n-1$ together with
$(b,r_c r_b x ) \leq -1$. But this is impossible, since 
\begin{displaymath}
\begin{split}
 (b, r_c r_b x)&=(b, r_b x)-2(r_bx, c)(b,c)\\
               &=-(b, x)-2\underbrace{(a,x)}_\text{$\geq
1$}\underbrace{(b,c)}_\text{$<0$}\\
               &> -1.
\end{split}
\end{displaymath}
Thus $\#D(r_a x) = \#D(r_b r_c r_b x) <n$ in this case too. This completes the
proof of $(i)$.

(ii) Replace $x$ by $r_a x$, then apply (i) above.

\end{proof}

\begin{lemma}
\label{lem:lemma21}
Suppose that $x\in D_n$ with $n\geq 1$. Then there exists some $y\in D_{n-1}$
with $y \prec x$.
\end{lemma}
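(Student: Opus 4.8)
The plan is to induct on $\dep(x)$, peeling off at each stage a single simple reflection that strictly lowers the depth of $x$ while tracking how the cardinality of $D(\,\cdot\,)$ changes through Proposition~\ref{pp:14}.

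First I would isolate the key reduction step. Since $x\in D_n$ with $n\ge 1$ we have $x\notin\Pi$ (because $\Pi\subseteq D_0$), so $\dep(x)\ge 2$. Choose $w\in S(x)$ as in Definition~\ref{def:st} and write $w=r_{a_1}\cdots r_{a_l}$ reduced, with $l=\dep(x)-1$ and $a_0=w^{-1}x\in\Pi$. Then $r_{a_1}x=(r_{a_2}\cdots r_{a_l})a_0$ is a root of the form $v a_0$ with $\ell(v)\le l-1$, so $\dep(r_{a_1}x)\le l=\dep(x)-1$. As $x\ne a_1$ (depths differ), the reflection $r_{a_1}$ sends $\Phi^+\setminus\{a_1\}$ into $\Phi^+$, so $r_{a_1}x\in\Phi^+$; comparing with Lemma~\ref{lem:pre} then forces $(x,a_1)>0$ and $\dep(r_{a_1}x)=\dep(x)-1$, whence $r_{a_1}x\prec x$. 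Setting $a=a_1$, this yields a simple root with $(x,a)>0$ and $r_a x\prec x$.

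Next I would split according to the value of $(x,a)$ using Proposition~\ref{pp:14}. If $(x,a)\ge 1$, then Proposition~\ref{pp:14}(i) gives $r_a x\in D_{n-1}$, and since $r_a x\prec x$ we may take $y=r_a x$ and are finished. Otherwise $(x,a)\in(0,1)$, so Proposition~\ref{pp:14}(iii) gives $r_a x\in D_n$ with the same $n\ge 1$, while $\dep(r_a x)=\dep(x)-1<\dep(x)$. Applying the inductive hypothesis to $r_a x$ produces some $y\in D_{n-1}$ with $y\prec r_a x$; since $r_a x\prec x$ and precedence is a partial order, hence transitive, we conclude $y\prec x$, as required.

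The induction is well founded: the second case strictly decreases $\dep(x)$ while keeping the root inside the fixed $D_n$ with $n\ge 1$, and this depth can never be driven to $1$ in that case (a depth-$1$ root lies in $\Pi\subseteq D_0$), so the first case must eventually trigger. The only genuine obstacle is the reduction step, namely producing a depth-lowering simple reflection satisfying $(x,a)>0$; this is delivered precisely by the reduced expression for an element of $S(x)$ together with Lemma~\ref{lem:pre}. Everything else is routine bookkeeping with Proposition~\ref{pp:14} and transitivity of $\prec$.
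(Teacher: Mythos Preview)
Your proof is correct and follows essentially the same approach as the paper: both use an element of $S(x)$ to produce a precedence chain from $x$ down to $\Pi$, and both invoke Proposition~\ref{pp:14} to track the $D_n$-level along that chain. The only difference is packaging---the paper argues by contradiction (assume no predecessor lies in $D_{n-1}$, conclude every term of the chain stays in $D_n$, contradicting $\Pi\subseteq D_0$), while you unwind the same chain one step at a time by induction on depth.
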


\begin{proof}
Suppose that the contrary is true. Let $x\in D_n$ such that there is no root in
$D_{n-1}$ preceding $x$. Write $x=wa$, where $a\in \Pi$, and $w\in S(x)$. Let
$w=r_{a_1} r_{a_2}\cdots r_{a_l}$ for some $a_1, \cdots, a_l \in \Pi$ with
$\ell(w)=l$. Then $a = r_{a_l}\cdots r_{a_1}x$. Observe that then
\begin{equation} 
\label{eq:preseq}
a\prec r_{a_{l-1}}\cdots r_{a_{1}}x\prec r_{a_{l-2}}\cdots
r_{a_1}x\prec\cdots\prec r_{a_1}x\prec x.
\end{equation} 
The assumption that $x$ is not preceded by any root in $D_{n-1}$, together with
Proposition~\ref{pp:14} yield that 
all the roots in (\ref{eq:preseq}), including $a$, are in $D_n$, contradicting
the fact the $a\in \Pi\subseteq D_0$.

\end{proof}

Next we give an algorithm to systematically compute all the $D_n$'s for an
arbitrary Coxeter group $W$ of finite rank:

\begin{proposition}
\label{pp:22}
Suppose that $W$ is a Coxeter group of finite rank. For $n\geq 1$, there is an
algorithm to compute $D_n$ provided that $D_{n-1}$ is known.
\end{proposition}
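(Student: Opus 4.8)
The plan is to grow $D_n$ upward from $D_{n-1}$ in the precedence order, using the simple reflections as elementary moves and Proposition~\ref{pp:14} to track which $D_m$ each root lands in. The starting observation is Lemma~\ref{lem:lemma21}: every $x\in D_n$ is preceded by some $y\in D_{n-1}$. Writing $x=wy$ with $\dep(x)=\dep(y)+\ell(w)$ and $w=r_{a_1}\cdots r_{a_l}$ reduced, we obtain a chain
\[
y\prec r_{a_l}y\prec r_{a_{l-1}}r_{a_l}y\prec\cdots\prec x,
\]
in which the depth increases by exactly one at each step (as in the proof of Lemma~\ref{lem:lemma21}, via Lemma~\ref{lem:pre}). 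Along this chain the $D$-index is non-decreasing (Lemma~\ref{lem:lemma13}) and changes by at most one at each step (Lemma~\ref{lem:lemma11}); since it begins at $n-1$ and ends at $n$, every root on the chain lies in $D_{n-1}\uplus D_n$, and there is a single step at which the chain passes from $D_{n-1}$ into $D_n$, after which it remains in $D_n$.

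This dictates the algorithm. First I would form the \emph{seeds}
\[
\{\,r_a y\mid y\in D_{n-1},\ a\in\Pi,\ (y,a)\le -1\,\},
\]
a finite set (each $r_a y$ lies in $D_n$ by Proposition~\ref{pp:14}(ii), and $D_n$ is finite by Corollary~\ref{cor:cor7}); by the transition-step observation, the first $D_n$-root on each chain is exactly such a seed, since at that step $\dep$ increases and the index jumps from $n-1$ to $n$, which by Proposition~\ref{pp:14} forces $(y,a)\le -1$. Then I would run a breadth-first search: from a root $x$ already known to lie in $D_n$, and for each $a\in\Pi$ with $(x,a)<0$ (equivalently $\dep(r_a x)=\dep(x)+1$ by Lemma~\ref{lem:pre}), I would form $r_a x$ and classify it by Proposition~\ref{pp:14}: if $(x,a)\in(-1,0)$ then $r_a x\in D_n$ and is enqueued, whereas if $(x,a)\le -1$ then $r_a x\in D_{n+1}$ and is discarded. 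Each root is represented by its canonical coefficients, so all the inner products $(x,a)$, $a\in\Pi$, are computable in finitely many arithmetic steps (this is where finite rank enters), and two roots are equal precisely when their coefficient vectors agree, so repeats are detected and the queue never cycles.

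Correctness and termination then follow from the chain analysis: every $x\in D_n$ is reachable from some seed by an upward path lying entirely inside $D_n$, so the search returns exactly $D_n$; and since $\#D_n<\infty$ (Corollary~\ref{cor:cor7}), only finitely many distinct roots are ever enqueued, so the procedure halts. The step I expect to be the main obstacle is precisely this combined monotonicity-and-adjacency argument: showing that the search never needs to leave the finite set $D_{n-1}\uplus D_n$ while still reaching all of $D_n$, i.e. that each chain from $D_{n-1}$ to a given $x\in D_n$ passes through a seed and thereafter stays in $D_n$. Once that is secured, finiteness of $D_n$ makes termination automatic and the bookkeeping with canonical coefficients is routine.
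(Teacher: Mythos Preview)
Your proposal is correct and follows essentially the same approach as the paper: seed with $\{r_a y : y\in D_{n-1},\ a\in\Pi,\ (y,a)\le -1\}$, then saturate within $D_n$ via simple reflections with $(x,a)\in(-1,0)$, and prove completeness by the chain argument from Lemma~\ref{lem:lemma21} combined with Lemma~\ref{lem:lemma13} to trap all intermediate roots in $D_{n-1}\uplus D_n$. The only addition you make beyond the paper is the explicit remark about representing roots by canonical coefficients to decide equality and compute inner products, which is a reasonable implementation detail.
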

\begin{proof}
We outline such an algorithm:
\begin{itemize}
\item[1)] Set $D= \emptyset$. 
\item[2)] Enumerate all the elements of $D_{n-1}$ in some order, that is, write
$D_{n-1} =\{ x_1, \cdots, x_m\}$, where $m=\#D_{n-1}$.
\item [3)] Starting with $x_1$, apply all the reflections $r_a$ where $a \in
\Pi$, to $x_1$, one at a time. If $(a, x_1) \leq -1$, then add $r_a x_1$ to $D$
if it is not already in $D$.
\item[4)] Repeat $3)$ to $x_2, \cdots , x_m$.
\item[5)] Enumerate all the elements of the modified set $D$ in some order, that
is, write $D=\{ x'_{1}, x'_{2}, \cdots, x'_{\#D}\}$.
\item[6)] Starting with $x'_{1}$, apply all the reflections $r_a$ where $a\in
\Pi$, to $x'_{1}$, one at a time. If $(a, x'_{1}) \in (-1, 0)$ and $r_a x'_{1}
\notin D$, then add $r_a x'_{1}$ to $D$.
\item[7)] Repeat $6)$ to $x'_{2}, \cdots, x'_{\#D}$.
\item[8)] Repeat steps $5)$ to $7)$ above.
\item[9)] Repeat $8)$ until no new elements can be added to $D$.
\item[10)] Set $D_n = D$.
\end{itemize} 
 
Next we show that the above algorithm will be able to produce all elements of
$D_n$ within a finite number of iterations.

Let $x\in D_n$ ($n\geq 1$) be arbitrary. Lemma \ref{lem:lemma21} yields that
there exists a $y\in D_{n-1}$ with $y\prec x$. Write $x=wy$ for some $w\in W$
with $\ell(w)=\dep(x)-\dep(y)$. Let $w=r_{a_1}r_{a_2}\cdots r_{a_l}$ where $a_1,
\ldots, a_l\in \Pi$ and $\ell(w)=l$. Then
$$
y\prec r_{a_l}y \prec r_{a_{l-1}} r_{a_l} y \prec \cdots \prec
r_{a_1}r_{a_2}\cdots r_{a_l} y=x.
$$
Since $x\in D_n$ and $y\in D_{n-1}$, it follows from Lemma~\ref{lem:lemma13}
that 
$$
r_{a_l}y,\, r_{a_{l-1}}r_{a_l}y,\, \ldots,\, r_{a_2}r_{a_3}\cdots r_{a_l} y \in
D_{n-1}\uplus D_{n}.
$$
Therefore there exists $i\in \{\, 1, 2, \ldots, l\,\}$ such that 
\begin{align*}
y&\in D_{n-1}\\
r_{a_l}y&\in D_{n-1}\\
&\vdots\\
r_{a_{i+1}} r_{a_{i+2}}\cdots r_{a_l} y&\in D_{n-1}\\
\noalign{\hbox{and}}
r_{a_i} (r_{a_{i+1}} r_{a_{i+2}}\cdots r_{a_l} y)&\in D_n\\
r_{a_{i-1}}r_{a_i} (r_{a_{i+1}} r_{a_{i+2}}\cdots r_{a_l} y)&\in D_n\\
&\vdots\\
r_{a_1}r_{a_2}\cdots r_{a_l} y=x&\in D_n.
\end{align*}
Since $r_{a_{i+1}} r_{a_{i+2}}\cdots r_{a_l} y\in D_{n-1}$, it follows that
$r_{a_i}r_{a_{i+1}} r_{a_{i+2}}\cdots r_{a_l} y$ is an element of $D_n$
obtainable by going through steps  3) and 4) above. This in turn implies that
$r_{a_{i-1}} r_{a_i}\cdots r_{a_l} y$ is an element obtainable by going through
steps 5) to~7). It then follows that $r_{a_{i-2}}r_{a_{i-1}} r_{a_i}\cdots
r_{a_l} y$ and so on are all obtainable by (repeated) application of step~8). In
particular, $x=r_{a_1}\cdots r_{a_l}y$ can be obtained after $(i-2)$ iterations
of step~8). Thus $x$ can be obtained by going through steps~1) to~8), with
step~8) repeated finitely many times. Since $x\in D_n$ was arbitrary, it follows
that every element of $D_n$ can be obtained from the above algorithm in this
manner with step~8) repeated finitely many times. 

Finally, $W$ is of finite rank, so $\#D_n<\infty$ and $\#D_{n-1}<\infty$.
Therefore step~9) will only be repeated a finite number of times and hence the
algorithm will terminate completing the proof.
\end{proof}

\begin{corollary}
If $\#R< \infty$, then we may compute $D_n$, for all $n\in \N$.
\end{corollary}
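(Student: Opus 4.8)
The plan is to argue by induction on $n$, using the algorithm of Proposition~\ref{pp:22} for the inductive step and the already-established computability of $D_0$ for the base case. The hypothesis $\#R < \infty$ means precisely that $W$ is a Coxeter group of finite rank, so the hypotheses of Proposition~\ref{pp:22} are met and, moreover, Corollary~\ref{cor:cor7} guarantees that every $D_m$ is finite.

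For the base case $n = 0$, I would appeal to the results quoted in Section~\ref{sec:dom}: here $D_0$ coincides with the set $\E$ of elementary roots, and when $\#R < \infty$ this set is finite by Theorem~2.8 of \cite{BH93} and admits a complete, explicit description by \cite{BB98}. Hence $D_0$ is computable, and in particular may be taken as known.

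For the inductive step, suppose that $D_{n-1}$ has been computed for some $n \geq 1$. Since $W$ is of finite rank, Proposition~\ref{pp:22} supplies an explicit algorithm that computes $D_n$ given $D_{n-1}$; feeding in the known set $D_{n-1}$ therefore produces $D_n$. Termination is not an issue: Corollary~\ref{cor:cor7} gives $\#D_{n-1} < \infty$ and $\#D_n < \infty$, so only finitely many roots are ever processed and finitely many reflections applied, whence step~9) of the algorithm in Proposition~\ref{pp:22} halts. This completes the induction and establishes computability of $D_n$ for each fixed $n$.

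There is no genuine obstacle here, as the corollary is essentially a bookkeeping consequence of Proposition~\ref{pp:22}. The only point worth a word is that the statement asserts computability for \emph{all} $n \in \N$, whereas the induction delivers, for each individual $n$, the set $D_n$ after finitely many iterations of the inductive step starting from $D_0$; this is exactly the sense in which every $D_n$ is computable, and no further argument beyond Proposition~\ref{pp:22} together with the finiteness of each $D_m$ is required.
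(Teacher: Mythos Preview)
Your proof is correct and follows essentially the same approach as the paper: use the explicit description of $D_0$ from \cite{BB98} as the base case, then apply Proposition~\ref{pp:22} inductively to pass from $D_{n-1}$ to $D_n$. The paper's proof is more terse, but the content is the same; your remarks on termination via Corollary~\ref{cor:cor7} are already absorbed into the proof of Proposition~\ref{pp:22} itself.
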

\begin{proof}
\cite{BB98} gives a complete description of $D_0$ when $\#R<\infty$. Now combine
\cite{BB98} and Proposition \ref{pp:22}, the result follows immediately.
\end{proof}

\section{Acknowledgments}
The results presented in this paper are based on parts of the
author's PhD thesis~\cite{FU1} and the author wishes to thank
A/Prof.~R.~B.~Howlett for all his help and encouragement. The author
also wishes to thank Prof.~G.~I.~Lehrer and Prof.~R.~Zhang for supporting this work.
Due gratitude must also be paid to the referee of this paper for
a number of valuable suggestions. 

\bibliographystyle{amsplain}

\providecommand{\bysame}{\leavevmode\hbox to3em{\hrulefill}\thinspace}
\providecommand{\MR}{\relax\ifhmode\unskip\space\fi MR }
% \MRhref is called by the amsart/book/proc definition of \MR.
\providecommand{\MRhref}[2]{%
  \href{http://www.ams.org/mathscinet-getitem?mr=#1}{#2}
}
\providecommand{\href}[2]{#2}

\end{document}